\documentclass{amsart}
\input epsf

\usepackage{amsfonts,amsthm,amsmath,amssymb,latexsym}
\usepackage{graphicx,color}
\usepackage[all]{xy}

\begin{document}

\author{Dragomir \v Sari\' c}
\thanks{This research is partially supported by National Science Foundation grant DMS 1102440.}

\address{Department of Mathematics, Queens College of CUNY,
65-30 Kissena Blvd., Flushing, NY 11367}
\email{Dragomir.Saric@qc.cuny.edu}

\address{Mathematics PhD. Program, The CUNY Graduate Center, 365 Fifth Avenue, New York, NY 10016-4309}

\theoremstyle{definition}

 \newtheorem{definition}{Definition}[section]
 \newtheorem{remark}[definition]{Remark}
 \newtheorem{example}[definition]{Example}

\newtheorem*{notation}{Notation}

\theoremstyle{plain}

 \newtheorem{proposition}[definition]{Proposition}
 \newtheorem{theorem}[definition]{Theorem}
 \newtheorem{corollary}[definition]{Corollary}
 \newtheorem{lemma}[definition]{Lemma}

\def\H{{\mathbb H}}
\def\F{{\mathcal F}}
\def\R{{\mathbb R}}
\def\Q{{\mathbb Q}}
\def\Z{{\mathbb Z}}
\def\E{{\mathcal E}}
\def\N{{\mathbb N}}
\def\X{{\mathcal X}}
\def\Y{{\mathcal Y}}
\def\C{{\mathbb C}}
\def\D{{\mathbb D}}
\def\G{{\mathcal G}}

\title{Bendings by finitely additive transverse cocycles}

\subjclass{}

\keywords{}
\date{\today}

\maketitle

\begin{abstract}
Let $S$ be any closed hyperbolic surface and let $\lambda$ be a maximal geodesic lamination on $S$. The amount of bending of an abstract pleated surface (homeomorphic to $S$) with the pleating locus $\lambda$ is completely determined by an $(\mathbb{R}/2\pi\mathbb{Z})$-valued finitely additive transverse cocycle $\beta$ to the geodesic lamination $\lambda$. We give a sufficient condition on $\beta$ such that the corresponding pleating map $\tilde{f}_{\beta}:\mathbb{H}^2\to\mathbb{H}^3$ induces a quasiFuchsian representation of the surface group $\pi_1(S)$. Our condition is genus independent. 
\end{abstract}

\section{Introduction}

Let $S$ be a closed hyperbolic surface and let $\lambda$ be a maximal geodesic lamination on $S$.  
The universal covering $\tilde{S}$ of $S$ is isometrically identified with the hyperbolic plane $\mathbb{H}^2$ . Denote by $\tilde{\lambda}$ the lift of $\lambda$ to $\mathbb{H}^2$ and denote by $\mathbb{H}^3$ the hyperbolic three space. Each component of $\mathbb{H}^2-\tilde{\lambda}$, called {\it plaque}, is an ideal hyperbolic triangle.
A {\it pleated surface} with the {\it pleating locus} $\lambda$  is an immersion $\tilde{f}:\mathbb{H}^2\to \mathbb{H}^3$ which conjugates the covering group of $\mathbb{H}^2\to S$ into a subgroup of the isometries of $\mathbb{H}^3$, and is totally geodesic on plaques and on geodesics of $\tilde{\lambda}$. The pleating map $\tilde{f}$ is an isometry from $\mathbb{H}^2$ onto its image $f(\mathbb{H}^2)$ for the path metric on $f(\mathbb{H}^2)$ induced by the hyperbolic metric of $\mathbb{H}^3$ . The pleatings along $\tilde{\lambda}$ give rise to a finitely additive $(\mathbb{R}/2\pi\mathbb{Z})$-valued transverse cocycle to $\lambda$, and conversely any such cocycle defines a pleated surface with the pleating locus $\lambda$ (cf. Bonahon \cite{Bon2}). The main result in the paper gives a sufficient condition on transverse cocycle such that the pleating map is injective on the boundary.

\subsection{Pleating maps along measured laminations}
Let $\mu$ be a measured (geodesic) lamination on the hyperbolic plane $\mathbb{H}^2$. By definition, $\mu$ is a collection of positive Borel measures on hyperbolic arcs transverse to the support geodesic lamination $|\mu |$. The collection of measures is invariant under homotopies relative the geodesics in the support $|\mu |$. For example, $\mu$ could be the lift of a measured lamination on a closed hyperbolic surface.  

Given a closed geodesic arc $I$ transverse to $|\mu |$,  denote by $\mu (I)\geq 0$ the total mass of the measure deposited on $I$. 
The Thurston norm of $\mu$ is given by
$$
\|\mu\| =\sup_I\mu (I) 
$$
where the supremum is over all closed hyperbolic arcs of length $1$.

Given a complex number $t\in\mathbb{C}$ there exists a (unique) pleated surface corresponding to the complex measure $t\mu$ with the bending along $|\mu |$ determined by the imaginary part of $t\mu$ (and the path hyperbolic metric on the immersed surface determined by the real part of $t\mu$).  
Then we have the following sufficient condition for the pleating map to be an embedding (cf. \cite{EpMarMar}, \cite{Sa1}).

\vskip .2 cm

\paragraph{\bf Theorem.}
{\it Let $\mu$ be a measured lamination on the hyperbolic plane $\mathbb{H}^2$ with finite Thurston norm.
There exist universal $C>0$ and $\epsilon >0$ such that whenever $t\in\mathbb{C}$ satisfies
$$
|Im (t)|< \frac{\epsilon}{ \|\mu\| e^{C\|\mu\|\cdot |Re(t)|}}
$$
the pleating map induced by $t\mu$ is an embedding.}

\vskip .2 cm

The purpose of this paper is to extend the above theorem to finitely additive transverse cocycles to (maximal) geodesic laminations on closed hyperbolic surfaces. Natural examples of finitely additive real valued transverse cocycles arise from real Fenchel-Nielsen coordinates on pants decomposition of closed surfaces. Namely, a  geodesic pairs of pants decomposition of a closed hyperbolic surface can be completed to a maximal geodesic lamination by adding three geodesics to each pair of pants that spiral around cuffs. The obtained maximal geodesic lamination has finitely many  leaves (i.e. geodesics) and the components of the complement of the geodesic lamination are ideal hyperbolic triangles. 
The Fenchel-Nielsen coordinates describe the shape of the pairs of pants (i.e. the cuff lengths) and how two pairs of pants fit together (i.e. the twist parameters) which completely describes the hyperbolic metric on the surface.
On the other hand, a
finitely additive real valued transverse cocycle to the maximal geodesic lamination describes how complementary  ideal triangles fit together to give the hyperbolic metric on the surface (cf. \cite{Bon2}). 

In their proof of the surface subgroup conjecture, Kahn and Markovic \cite{KahnMark} used a sufficient condition on complex Fenchel-Nielsen coordinates to obtain embedded pleated surfaces. In \cite{Sar2} we obtained a sufficient condition on finitely additive transverse cocycles to finite geodesic laminations that gives embedded pleated surfaces. Our main objective is to extend the scope of this theorem to arbitrary maximal geodesic laminations. 

\subsection{Finitely additive cocycles and pleating maps}
Let $\lambda$ be an arbitrary maximal geodesic lamination on a closed hyperbolic surface $S$. The plaques of $\lambda$ are ideal hyperbolic triangles. A {\it finitely additive real valued transverse cocycle} to $\lambda$ is an assignment of a real valued finitely additive measure to each hyperbolic arc transverse to $\lambda$ that is invariant under homotopies relative the leaves of $\lambda$. A finitely additive transverse cocycle defines H\"older distribution on all  
hyperbolic arcs transverse to $\lambda$, and conversely any transverse H\"older distribution defines a finitely additive transverse cocycle to $\lambda$ (cf. Bonahon \cite{Bon1}).

As in the case of finite geodesic laminations, the hyperbolic metric on a closed surface is completely determined by 
the induced finitely additive transverse cocycle. Moreover, the Teichm\"uller space $T(S)$ of a closed hyperbolic surface $S$ is parametrized by open cone $\mathcal{C}(\lambda )$ inside the space $\mathcal{H}(\lambda ,\mathbb{R})$ of finitely additive real valued transverse cocycles to $\lambda$ (cf. Thurston \cite{Th} and Bonahon \cite{Bon2}). A detailed study of the space $\mathcal{H}(\lambda ,\mathbb{R})$ is carried out by Bonahon (cf. \cite{Bon1}, \cite{Bon3}). 

A {\it pleated surface} with the {\it pleating locus} $\lambda$ is an immersion
$$
\tilde{f}:\mathbb{H}^2\to\mathbb{H}^3
$$
which is an isometry on each complementary region to $\tilde{\lambda}$ and on each geodesic of $\tilde{\lambda}$ such that 
$$
\tilde{f}G\tilde{f}^{-1}=K,
$$
where $G$ is the covering group of $\mathbb{H}^2\to S$ and $K$
is a subgroup of the isometry group of $\mathbb{H}^3$. 
A pleated surface along $\lambda$ induces a $(\mathbb{C}/2\pi i\mathbb{Z})$-valued transverse cocycle to $\lambda$ where the real part parametrizes the induced path metric and the imaginary part parametrizes the amount of bending along $\lambda$. Moreover, the space of all representations of $G$ that realize $\lambda$ is parametrized by an open subspace $\mathcal{C}(\lambda )+i\mathcal{H}(\lambda ,\mathbb{R}/2\pi\mathbb{Z})$ of the space $\mathcal{H}(\lambda ,\mathbb{C}/2\pi i\mathbb{Z})$ (cf. Bonahon \cite{Bon2}).

\subsection{The main result} We fix a hyperbolic metric on $S$ and give a sufficient condition on  ($\mathbb{R}/2\pi \mathbb{Z}$)-valued transverse cocycle $\beta$ to the maximal geodesic lamination $\lambda$ such that the extension to the boundary   $\tilde{f}_{\beta}:\partial_{\infty}\mathbb{H}^2\to\partial_{\infty}\mathbb{H}^3$ of the corresponding pleating map $\tilde{f}_{\beta}:\mathbb{H}^2\to\mathbb{H}^3$ is injective.

Let $\{ h_1,\ldots ,h_m\}$ be a set of closed geodesic arcs on $S$ transversely intersecting $\lambda$ (with endpoints in the plaques of $\lambda$) such that the components of $\lambda -\cup_{i=1}^mh_i$ have finite length. In addition, if $h_i$'s are collapsed to points and homotopic arcs of $\lambda -\cup_{i=1}^mh_i$ are identified, we obtain a trivalent (topological) train track which carries $\lambda$.

A {\it gap} of $h_i$ is a component of $h_i-\lambda$ that does not contain an endpoint of $h_i$.  Given a gap $d$ of $h_i$, denote by $g_{d}^{+}$ and $g_{d}^{-}$ two arcs of $\lambda -\{ h_1,\ldots ,h_m\}$ that pass through endpoints of $d$ oriented in the same direction (with respect to $h_i$).
If $g_{d}^{+}$ and $g_{d}^{-}$ together with the gap $d$ and another (on some arc $h_j$ ) gap form a quadrilateral, then we say that $g_{d}^{+}$ and $g_{d}^{-}$ are {\it parallel}.

Since the topological train track (obtained by collapsing $h_i$'s) is trivalent, it follows that each $h_i$ has exactly one gap $d$ such that $g_{d}^{+}$ and $g_{d}^{-}$ are not parallel in one direction. Choose an arbitrary point in the interior of $d$ to divide $h_i$ into two arcs $h_i',h_i''$ with $h_i=h_i'\cup h_i''$. We form a new set of arcs by taking all $h_i,h_i',h_i''$ and for simplicity of notation denote it by $\{ k_1,\ldots ,k_n\}$, called a {\it set of ties for} $\lambda$. 
 (An equivalent definition for a set of ties is given using ``geometric train tracks'', cf. Bonahon \cite{Bon1} and \S 2). 

Fix a set of ties $\{ k_1,\ldots ,k_n\}$ for $\lambda$.
Two arcs $k_i$ and $k_j$ are {\it paired} if there exists an arc in $\lambda -\{ k_1,\ldots ,k_n\}$ that connects them.
Define
\begin{equation}
\label{eq:max_on_edge}
l^{*}=\max_{i,j}diam(k_{i}\bigcup k_{j})
\end{equation}
and
\begin{equation}
\label{eq:min_on_edge}
l_{*}=\min_{i,j}dist(k_{i}, k_{j}),
\end{equation}
where the maximum and the minimum are over all paired arcs $k_i,k_j$, $diam(k_{i}\bigcup k_{j})$ is the diameter of $k_{i}\bigcup k_{j}$, and $dist(k_{i}, k_{j})$ is the distance between $k_i$ and $k_j$.

Moreover, we define
$$
w^{*}=\max_{1\leq i\leq n}|k_i|
$$
and
$$
w_{*}=\min_{1\leq i\leq n}|k_i|,
$$
where $|k_i|$ is the length of the arc $k_i$.

A set of ties $\{ k_1,\ldots ,k_n\}$ for $\lambda$ is said to be {\it geometric} if each angle of the intersection between an arc in $\{ k_1,\ldots ,k_n\}$ and a  geodesics of $\lambda$ is in the interval $[\pi /4,3\pi /4]$,  and 
\begin{equation}
\label{eq:length_bounded_by_delta}
w^{*}\leq 1/20.
\end{equation}

The above quantities $l^{*}$, $l_{*}$, $w^{*}$ and $w_{*}$ give quantitative information about the hyperbolic metric on $S$ and the position of the geodesic lamination $\lambda$ on $S$. We use this information in order to give a sufficient condition on the bending cocycles such that the bending map is injective.

We define the {\it norm} of $\beta$ for the geometric family of arcs $\{ k_1,\ldots ,k_n\}$ by
\begin{equation}
\label{eq:max_on_k_i}
\|\beta\|_{max}=\max \{ |\beta (k_i)|:1\leq i\leq n\} .
\end{equation}
The norm $\|\beta\|_{max}$ is analogous to the Thurston norm of measured laminations.

\vskip .2 cm

In general, a finitely additive real measure $\beta$ on a closed interval $I$ has infinite ``variation'' in the sense that there exists a sequence $I_n$ of subintervals of $I$ with $|\beta (I_n)|\to\infty$ as $n\to\infty$ (cf. Bonahon \cite{Bon3}). In order to find a sufficient condition for the injectivity of the bending map $\tilde{f}_{\beta}:\partial_{\infty}\mathbb{H}^2\to\partial_{\infty}\mathbb{H}^3$, we introduce (below) a notion of ``variation on large gaps'' on the set of ties. (It is important to note that if $\beta$ does not have small enough variation on large gaps in order to have injectivity of $\tilde{f}_{\beta}$ then, for $|t|>0$ small enough, $t\beta$ does have small enough variation on large gaps such that $\tilde{f}_{t\beta}$ is injective.)

Given $\delta>0$, we introduce the $\delta$-variation of $\beta$ on $k_i$ as follows. 
Give an arbitrary orientation to $k_i$.
Let $\{ d_j: j=1,\ldots ,j_i\}$ be a finite family of gaps of $k_i$ together with the two components of $k_i-\lambda$ containing the endpoints of $k_i$. Define $k_{d_j}$, for $j=1,\ldots ,j_i$, to be the subarc of $k_i$ whose initial point is the initial point of $k_i$ and whose endpoint is a point in $d_j$. 

Then the {\it $\delta$-variation of $\beta$ on} $k_i$ 
is given by
\begin{equation}
\label{eq:k_d_measure_of_variation}
\|\beta\|_{var_\delta ,k_i}=\max_{1\leq j\leq j_i}|\beta (k_{d_j})|,
\end{equation}
where the set $\{ d_j: j=1,\ldots ,j_i\}$ is chosen such that the length of $k_i\setminus\cup_{j=1}^{j_i}{d_j}$ is less than $\delta |k_i|$ ($|k_i|$ denotes the length of $k_i$).

Moreover, the {\it $\delta$-variation of $\beta$ on a geometric family} $\{ k_1,\ldots ,k_n\}$ 
is given by
\begin{equation}
\label{eq:variation_epsilon}
\|\beta\|_{var_{\delta}}=\max_{1\leq i\leq n} \|\beta\|_{var_\delta ,k_i}.
\end{equation}

Our main result is a sufficient condition for the injectivity of the bending map corresponding to a transverse cocycle $\beta$ in terms of a geometric set of arcs.

\begin{theorem}
\label{thm:main} There exist  ${\epsilon}  >0$ and $\delta >0$ such that for any closed hyperbolic surface $S$ and a maximal geodesic lamination  $\lambda$ on $S$ the following holds.
Let $\{ k_1,\ldots ,k_n\}$ be a geometric set of arcs for $\lambda$ such that 
\begin{equation}
\label{eq:dist_size_k} w^{*}<\frac{e^{-2l^{*}}\tanh\frac{l_{*}}{2}}{8\pi}.\end{equation} If an $(\mathbb{R}/2\pi\mathbb{Z})$-valued transverse
cocycle $\beta$ to $\lambda$ satisfies
\begin{equation}
\label{eq:max_bdd_by_epsilon}
\|\beta\|_{max}<\epsilon w_{*}
\end{equation}
and 
\begin{equation}
\label{eq:varr_epsilon_bounded_by_epsilon}
\|\beta\|_{var_{\delta}}<\epsilon
\end{equation}
then the developing map $\tilde{f}_{\beta}:\mathbb{H}^2\to\mathbb{H}^3$ continuously extends to an injective map $\tilde{f}_{\beta}:\partial_{\infty}\mathbb{H}^2\to\partial_{\infty}\mathbb{H}^3$.
\end{theorem}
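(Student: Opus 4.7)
The plan is to adapt the separating half-space technique of \cite{EpMarMar} and \cite{Sar2} to general maximal laminations, using the geometric family of ties $\{k_1,\ldots,k_n\}$ as a replacement for the finite combinatorics available in \cite{Sar2}. To obtain the injective boundary extension, for each pair of distinct plaques $P,Q$ of $\tilde\lambda$ I would construct a hyperbolic plane $\Pi\subset\mathbb{H}^3$ separating $\tilde f_\beta(P)$ from $\tilde f_\beta(Q)$ which is stable under further bending past $Q$. Nested intersections of such half-spaces then produce a continuous extension to $\partial_\infty\mathbb{H}^2$, and applying the construction to plaques accumulating at two distinct ideal points of $\partial_\infty\mathbb{H}^2$ yields injectivity at infinity.

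The first step is a single-tie estimate. Lift a tie $k_i$ to $\tilde k\subset\mathbb{H}^2$, fix a gap $d$ of $\tilde k$, and let $P,P'$ be the two plaques of $\tilde\lambda$ meeting the leaves through the endpoints of $d$. The pleating map is a rigid isometry on each plaque, and the relative position of $\tilde f_\beta(P)$ and $\tilde f_\beta(P')$ in $\mathbb{H}^3$ is a rotation of angle $\beta$ evaluated on the initial subarc of $\tilde k$ ending in $d$. Using the length bound $w^*\leq 1/20$, the transversality angles in $[\pi/4,3\pi/4]$, and the diameter bound $l^*$ on paired arcs, one can quantify how much rotation a separating half-space can absorb without collapsing: the inequality \eqref{eq:dist_size_k} is calibrated so that after a rotation of magnitude at most $\|\beta\|_{max}+\|\beta\|_{var_{\delta}}$ the half-space still has positive angular margin. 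Here $\tanh(l_*/2)$ measures the minimal aperture between paired ties, $e^{-2l^*}$ is the contraction factor needed to propagate the plane across one tie, and $8\pi$ is the angular budget consumed by working modulo $2\pi$. I would then chain these local estimates along the dual graph of the geometric train track: for $P,Q$ separated by a chain of ties $k_{i_1},\ldots,k_{i_N}$, propagate a half-space from $P$ through each tie, yielding planes $\Pi_1,\ldots,\Pi_N$, each separating what has already been swept from what remains. A telescoping estimate, combined with the linear dependence $\|\beta\|_{max}<\epsilon w_*$ on the minimal tie-length $w_*$, gives uniform control of the total angular displacement of the $\Pi_j$ independently of $N$.

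The main obstacle is the finite (but not countable) additivity of $\beta$: the ordinary variation of $\beta$ on a single tie can be infinite, so one cannot estimate the relative position of far-apart plaques simply by summing bending angles over a nested sequence of intermediate plaques, as was done in the measured lamination theorem above and in the finite-lamination version of \cite{Sar2}. The $\delta$-variation hypothesis is designed precisely to bypass this: it controls partial sums that terminate in the finitely many distinguished gaps which exhaust all but a $\delta$-fraction of each tie, while the remaining plaques meeting the unaccounted portion of $\tilde k$ contribute boundary data whose visual size is controlled \emph{geometrically} by $e^{-2l^*}$ rather than by $\beta$. Showing that these two independent sources of control combine to confine every propagated rotation to a cone of angular width strictly less than $\pi$ at every finite stage of the chain is the crucial technical step; once it is in place, the standard argument that nested separating half-spaces shrink to distinct points at infinity delivers the continuous injective extension of $\tilde f_\beta$ to $\partial_\infty\mathbb{H}^2$.
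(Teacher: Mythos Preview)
Your proposal has the right overall architecture---nested separating objects whose shadows pin down the boundary extension---and you correctly flag finite additivity as the central difficulty. But the mechanism you describe does not match what actually makes the argument work, and the role you assign to hypothesis~\eqref{eq:dist_size_k} is not the one it plays.

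The paper does not chain separators along ties. It fixes the geodesic $g$ with endpoints $x,y$, picks $p\in g$ in a plaque, and propagates cones $\mathcal{C}(\cdot,g_1,\pi/2)$ along the ray $g_1$ itself. The ray is cut into arcs $[a_n,b_n]$ by its intersections with the boundaries of the long rectangles of $\tilde\tau$, and this produces two essentially different kinds of arc: those that connect two \emph{long} sides of a rectangle (so cross an entire tie), and those that connect two \emph{short} sides (so run nearly parallel to the leaves of $\tilde\lambda$ for length $\geq l_*$). For the first kind your outline is roughly right: one has $|\beta([a_n,b_n])|\leq\|\beta\|_{\max}$, the factor $\psi_{P,Q}$ is controlled by $\|\beta\|_{var_\delta}+\delta$, and a quantitative nesting lemma (Lemma~\ref{lem:nested_cones_for_bounded_distances}) closes the step.

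The second kind is where your sketch has a genuine gap. When $[a_n,b_n]$ connects two short sides it crosses only a \emph{proper subarc} of a tie, and $|\beta([a_n,b_n])|$ can be arbitrarily large; this is the failure of countable additivity showing up at a \emph{single} step, not as accumulated variation along a chain, so no telescoping bound in terms of $\|\beta\|_{\max}+\|\beta\|_{var_\delta}$ is available for the rotation $R_Q=R_{g_Q^P}^{\beta(P,Q)}$. The paper's remedy is purely geometric and depends on having chosen a fixed direction $g_1$: because $[a_n,b_n]$ runs the full length $\geq l_*$ of a rectangle while staying within width $w^*$, the axis $g_Q^P$ meets $g_1$ at angle $\phi\leq\frac{\pi}{2}(\coth\frac{l_*}{2})\,w^*$ (Lemma~\ref{lem:angle_of_intersection}), and Lemma~\ref{lem:rotation_large_angle} shows that a rotation about an axis this close to $g_1$ moves the frame $(e^{-m}j,-j)$ by $O(\phi)$ \emph{regardless of the rotation angle}. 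Condition~\eqref{eq:dist_size_k} is precisely what forces $\phi<e^{-2l^*}/16$ so that this lemma applies; it is a constraint on the geometry of $\tilde\tau$ relative to $g_1$, not on how much bending a half-space can absorb. Your framework of propagating planes across ties has no distinguished direction to be ``nearly parallel'' to, so this mechanism is unavailable, and without it the large-rotation step is uncontrolled.

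A minor correction: the small gaps omitted in the $\delta$-variation are not handled by any ``visual size $e^{-2l^*}$'' bound. They are handled by the crude estimate $|\beta(P,P_d)|\leq\pi$ together with the fact that their total length is at most $\delta|k_i|$; this is why $\delta$ enters additively with $\|\beta\|_{var_\delta}$ in the bound for $\psi_{P,Q}$ and must itself be taken small.
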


\begin{remark}
The condition (\ref{eq:max_bdd_by_epsilon}) is the standard condition that works for the transverse measures (cf. \cite{EpMarMar}, \cite{Sa1}). The condition (\ref{eq:varr_epsilon_bounded_by_epsilon}) is a new condition needed to control the geometry of the realization of the transverse cocycle $\beta$ due to the fact that the  variation of $\beta$ is unbounded. Note that the condition (\ref{eq:max_bdd_by_epsilon}) for a choice of a family of geometric arcs does not imply similar condition on an arbitrary arc of length at most $1$. For this reason it is necessary that the arcs $k_i$'s are on a relatively large distance compared to their sizes which is made explicit by (\ref{eq:dist_size_k}).
\end{remark}

\begin{remark}
The quantities $l^{*}$ and $l_{*}$ depend on the lamination $\lambda$. The constants $\epsilon$ and $\delta$ are computed in terms of $l^{*}$ and $l_{*}$ in the proof of the theorem. If a geometric set of arcs $\{ k_1,\ldots ,k_n\}$ does not satisfy (\ref{eq:dist_size_k}) then we can divide each arc into several subarcs until the condition is satisfied. If $\lambda$ contains short closed geodesics then $l_{*}$ is small for any choice of a geometric set of arcs for $\lambda$. A generic geodesic lamination $\lambda$ contains no closed geodesics and the choice of a geometric set of arcs can be made such that $l^{*}\geq 1/5$ and $l_{*}\geq l^{*}/4$ in which case we can choose $w^{*}=4.41719\times 10^{-10}$ and
$\epsilon = \delta=3.61749\times
10^{-17}.$ We give a table of values for $\epsilon$ and $\delta$ when $l_{*}=l^{*}/4$ for various values of $l^{*}$ (cf. Table \ref{tab:consts}). It seems that the optimal value is $l^{*}=0.0238523$ in which case $\epsilon =\delta = 2.01795\times 10^{-13}$ and $w^{*}\leq 1.27126\times 10^{-11}$. 
\end{remark}

Let $\alpha$ be an $\mathbb{R}$-valued transverse cocycle to $\lambda$ which is induced by the hyperbolic metric on $S$ (cf \cite{Bon2}).  For $z\in\mathbb{C}$, define the transverse cocycle $\alpha_z$ by 
$$
\alpha_z(k)=(1+z)\alpha (k)\mod (2\pi i\mathbb{Z})
$$
for each arc $k$ transverse to $\lambda$.

The developing shear-bend map $\tilde{f}_{z}:\mathbb{H}^2\to\mathbb{H}^3$ (normalized to be the identity on a fixed plaque of $\lambda$)  corresponding
to the transverse cocycle $\alpha_z$ induces a holomorphic family (in $z$) of
representation of $\pi_1(S)$ to $PSL_2(\mathbb{C})$ (cf. \cite{Bon2}). As a corollary to the above theorem, we obtain

\begin{corollary}
Let $\alpha$ be an $\mathbb{R}$-valued transverse cocycle to a geodesic lamination $\lambda$ corresponding to a hyperbolic metric on a closed surface $S$ and let $\tilde{f}_{z}$ be the
shear-bend map for $\alpha_{z}$. Then there exists $\epsilon >0$ such that
the shear-bend map
$$\tilde{f}_{z}:\mathbb{H}^2\to\mathbb{H}^3$$
extends by continuity to a holomorphic motion of
$\partial_{\infty}\mathbb{H}^2$ in $\partial_{\infty}\mathbb{H}^3$
for the parameter $\{z\in\mathbb{C}:|z|<\epsilon \}$.
\end{corollary}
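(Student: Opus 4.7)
The plan is to apply Theorem \ref{thm:main} to the complex cocycles $\alpha_z=(1+z)\alpha$ for $z$ in a sufficiently small disk around $0$, and then to promote the resulting family of injective boundary extensions to a holomorphic motion. The initial observation is that $\alpha_z$ decomposes as a real shear part $(1+\mathrm{Re}(z))\alpha$ and an imaginary bending part $\mathrm{Im}(z)\alpha$. For $|z|$ small, the real part remains in the open cone $\mathcal{C}(\lambda)$ of shear cocycles defining hyperbolic metrics on $S$, giving a metric close to the original one; the imaginary part plays the role of the bending cocycle $\beta$ in the main theorem.

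First I would fix a geometric set of arcs $\{k_1,\ldots ,k_n\}$ for the original hyperbolic metric so that (\ref{eq:dist_size_k}) holds, and let $\epsilon$ and $\delta$ be the constants furnished by Theorem \ref{thm:main}. Since $l^{*}$, $l_{*}$, $w^{*}$, $w_{*}$ vary continuously with the hyperbolic metric, the same arcs (after a small adjustment ensuring the angle condition $[\pi/4,3\pi/4]$ is preserved) form a geometric set for the metric determined by $(1+\mathrm{Re}(z))\alpha$ when $|z|$ is small, and condition (\ref{eq:dist_size_k}) continues to hold with essentially the same constants. The hypotheses (\ref{eq:max_bdd_by_epsilon}) and (\ref{eq:varr_epsilon_bounded_by_epsilon}) for $\beta=\mathrm{Im}(z)\alpha$ reduce to
$$
|\mathrm{Im}(z)|\cdot\|\alpha\|_{max}<\epsilon w_{*}\quad\text{and}\quad |\mathrm{Im}(z)|\cdot \|\alpha\|_{var_\delta}<\epsilon,
$$
both of which hold whenever $|z|$ is smaller than some explicit $\epsilon'>0$, provided that $\|\alpha\|_{max}$ and $\|\alpha\|_{var_\delta}$ are finite. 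The first is immediate since $\|\alpha\|_{max}$ is a maximum over finitely many numbers; the second, which is the non-trivial input, should follow from the H\"older regularity of shear cocycles of closed hyperbolic surfaces established in Bonahon \cite{Bon1}. Theorem \ref{thm:main} then provides an injective extension $\tilde{f}_{z}:\partial_\infty\mathbb{H}^2\to\partial_\infty\mathbb{H}^3$ for each $|z|<\epsilon'$.

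Finally, I would upgrade this family of injective boundary maps to a holomorphic motion. The representation $\rho_z:\pi_1(S)\to\mathrm{PSL}_2(\mathbb{C})$ attached to $\alpha_z$ depends holomorphically on $z$ (cf.\ \cite{Bon2}), and consequently the fixed points on $\partial_\infty\mathbb{H}^3$ of the loxodromic isometries $\rho_z(\gamma)$ depend holomorphically on $z$. These fixed points are exactly the $\tilde{f}_{z}$-images of the fixed points of $\rho_0(\gamma)$ on $\partial_\infty\mathbb{H}^2$, and the latter form a dense set. Combining the holomorphic dependence on this dense set with the injectivity of $\tilde{f}_{z}$ on all of $\partial_\infty\mathbb{H}^2$, the $\lambda$-lemma of Ma\~n\'e--Sad--Sullivan extends the correspondence to a holomorphic motion of the entire circle $\partial_\infty\mathbb{H}^2$ inside $\partial_\infty\mathbb{H}^3$ parametrized by $\{z:|z|<\epsilon'\}$. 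The main obstacle I expect is the verification that $\|\alpha\|_{var_\delta}$ is finite for the shear cocycle of a closed hyperbolic surface, since, as the paper stresses, generic finitely additive cocycles have unbounded variation on subintervals; the expectation is that the underlying hyperbolic geometry tames this pathology precisely as needed to feed Theorem \ref{thm:main}.
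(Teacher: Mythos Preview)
Your proposal is correct and follows essentially the same route as the paper's proof of Theorem~\ref{thm:shear-bend_hol_motion}: split $\alpha_z$ into its real shear part and imaginary bending part, use continuity of the train-track constants $l^*,l_*,w^*,w_*$ under small changes of metric to keep the geometric set of arcs valid, and then apply Theorem~\ref{thm:main} with $\beta=\mathrm{Im}(z)\alpha$.

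Two small comments. First, your stated ``main obstacle''---finiteness of $\|\alpha\|_{var_\delta}$---is not an obstacle at all: by definition $\|\alpha\|_{var_\delta,k_i}=\max_{1\le j\le j_i}|\alpha(k_{d_j})|$ is a maximum over \emph{finitely many} real numbers, so it is automatically finite, and then $\|\mathrm{Im}(z)\alpha\|_{var_\delta}=|\mathrm{Im}(z)|\cdot\|\alpha\|_{var_\delta}$ for $|\mathrm{Im}(z)|$ small (this is also the content of the remark in the paper that $t\beta$ always satisfies the variation hypothesis for $|t|$ small). No appeal to H\"older regularity is needed. Second, for the holomorphicity step you use the dense set of fixed points of $\rho_z(\gamma)$, while the paper instead uses that $\tilde f_z$ restricted to each plaque depends holomorphically on $z$ (Bonahon \cite{Bon2}) and that ideal vertices of plaques are dense in $\partial_\infty\mathbb{H}^2$; both arguments are valid and lead to the same application of the $\lambda$-lemma.
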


\subsection{Outline of the proof of main theorem} It is enough to prove that any two points $x,y\in\partial_{\infty}\mathbb{H}^2$ are mapped onto distinct points in $\partial_{\infty}\mathbb{H}^3$ under the bending map $\tilde{f}_{\beta}$.  Let $g$ be a geodesic in $\mathbb{H}^2$ whose endpoints are $x$ and $y$. 

If $g$ is a leaf of $\tilde{\lambda}$ then $\tilde{f}_{\beta}(x)\neq \tilde{f}_{\beta}(y)$ because the bending map is an isometry on leaves and plaques of $\tilde{\lambda}$.  

From now on we assume that $g$ transversely intersects $\tilde{\lambda}$. Then $g$ necessarily intersects plaques of $\tilde{\lambda}$ by the maximality(of $\tilde{\lambda}$). 

Consider a fixed geometric set of ties $\{ k_1,\ldots ,k_n\}$ for $\lambda$. If two ties are paired, connect their vertices by hyperbolic arcs to form a quadrilateral, called a {\it long rectangle}. The two paired ties are {\it short sides} and the other two sides are {\it long sides} of the long rectangle. The set of all long rectangles are edges and the set of ties are vertices of a {\it geometric train track} $\tau$. The geodesic lamination $\lambda$ is contained in the interior of the union of all edges (i.e. long rectangles) of $\tau$.  

Denote by $\tilde{\lambda}$ and $\tilde{\tau}$ the lifts to $\mathbb{H}^2$ of the geodesic lamination $\lambda$ and the geometric train track $\tau$. Then the geodesic lamination $\tilde{\lambda}$ is contained in the interior of the union of edges (i.e. long rectangles) of $\tilde{\tau}$.

If geodesic $g$ is completely contained in $\tilde{\tau}$ then $g$ is a leaf of 
$\tilde{\lambda}$. Therefore $g$ contains a point $p$ outside $\tilde{\tau}$. 
The point $p$ divides the geodesic $g$ into two rays $g_1$ and $g_2$ with 
the common initial point $p$ and endpoints $x$ and $y$, respectively. 

We normalize the bending map $\tilde{f}_{\beta}$ to be the identity on the 
plaque of $\tilde{\lambda}$ which contains $p$. Form two hyperbolic cones $
\mathcal{C}(p,g_1,\pi /2)$ and $\mathcal{C}(p,g_2,\pi /2)$ whose central 
axes are $g_1$ and $g_2$ with angle $\pi /2$.
A {\it shadow} of a cone is the set of endpoints (on the boundary $\partial_{\infty}\mathbb{H}^3$) of the rays contained in the cone. 
 We prove that $\tilde{f}
_{\beta}(x)$ stays in the (open) shadow at $\partial_{\infty}\mathbb{H}^3$ of the cone $
\mathcal{C}(p,g_1,\pi /2)$, and analogously for $\tilde{f}_{\beta}(y)$. This implies the desired result since the open shadows of $
\mathcal{C}(p,g_1,\pi /2)$ and $\mathcal{C}(p,g_2,\pi /2)$ are disjoint. 

We consider the ray $g_1$ and $\tilde{f}_{\beta}(x)$. The bending map 
$\tilde{f}_{\beta}$ is mapping the ray $g_1$ to a piecewise geodesic in $\mathbb{H}^3$ with the bending points $\tilde{\lambda}\cap g_1$ and the bending amount given by the finitely additive transverse cocycle $\beta$. The idea of controlling the position of $\tilde{f}_{\beta}(x)$ is to divide the ray $g_1$ into finite arcs such that the finitely additive ``$\beta$-measure'' on each arc is well-behaved with respect to the size of the arc. Consequently the bending map $\tilde{f}_{\beta}$ moves the endpoints of the arcs and the tangent vectors to $g_1$ at these endpoints such that 
$\tilde{f}_{\beta}(g_1)$ stays inside the cone $\mathcal{C}(p,g_1,\pi /2)$.

In more details,
consider the intersections of the ray $g_1$ with the boundary sides of the long rectangles (i.e edges of $\tilde{\tau}$) in the order from the initial point $p$. The first point of intersection $a_1$ of $g_1$ and (the union of long rectangles of) $\tilde{\tau}$ is on a long side of a long rectangle $E_1$. If $g_1$ exits $E_1$ through other long side of $E_1$, then denote by $b_1$ that point. If $g_1$ exits an adjacent edge $E_2$ through its long side then denote that point by $b_1$. If $g_1$ exits an edge $E_3$ adjacent to $E_2$ through its long side then denote the point of exit by $b_1$. Finally if $g_1$ exists $E_3$ through a short side, denote by $b_1$ the point of exit of $g_1$ from a short side of $E_2$.

We obtained the first arc $[a_1,b_1]$, denote by $E_{a_1}$ the edge of $\tilde{\tau}$ which $g_1$ enters at the point $a_1$ and denote by $E_{b_1}$ the edge which $g_1$ leaves at the point $b_1$. We take $a_2$ to be the point at which $g_1$ enters the first edge $E_{a_2}$ after the edge $E_{b_1}$. We note that it is possible that $b_1=a_2$ if $E_{b_1}$ and $E_{a_2}$ share a short edge. Then $b_2$ is determined analogously to $b_1$. 

We continue this process to obtain a sequence of disjoint arcs $\{ (a_n,b_n)\}$ on $g_1$ given in the increasing order for the orientation of $g_1$. If $b_n\neq a_{n+1}$ then the open arc $(b_n,a_{n+1})$ does not intersect $\tilde{\tau}$ and the closed arc $[b_n,a_{n+1}]$ does not intersect $\tilde{\lambda}$. According to our definition, either $[a_n,b_n]$ connects long sides of (possibly the same) long rectangle(s) and intersects at most three long rectangles, or $[a_n,b_n]$  connects two short sides of a long rectangle while intersecting at most three long rectangles (cf. \S 6).

Consider a sequence of nested cones $\{ \mathcal{C}(a_n,g_1,\pi /2)\supset \mathcal{C}(b_n,g_1,\pi /2)\}$. Let $P_{a_n}$ and $P_{b_n}$ be plaques containing $a_n$ and $b_n$, respectively. Then $\tilde{f}_{\beta}|_{P_{a_n}}$ and 
$\tilde{f}_{\beta}|_{P_{b_n}}$ are hyperbolic isometries of $\mathbb{H}^3$ such that 
$$
\tilde{f}_{\beta}|_{P_{b_n}}=\tilde{f}_{\beta}|_{P_{a_n}}\circ R_{[a_n,b_n]}
$$
where $R_{[a_n,b_n]}$ is a hyperbolic isometry defined using the transverse cocycle $\beta$ on the  arc $[a_n,b_n]$ (cf. Bonahon \cite{Bon2} and \S 4). 

We need to prove that
$$
\tilde{f}_{\beta}|_{P_{a_n}}(\mathcal{C}(a_n,g_1,\pi /2))\supset
\tilde{f}_{\beta}|_{P_{b_n}}(\mathcal{C}(b_n,g_1,\pi /2))
$$
for all $n$, which is equivalent to
\begin{equation}
\label{eq:main_inclusion}
\mathcal{C}(a_n,g_1,\pi /2)\supset R_{[a_n,b_n]}(\mathcal{C}(b_n,g_1,\pi /2))
\end{equation}
because the maps are hyperbolic isometries.

The core of the proof of the above theorem is bounding the isometry $R_{[a_n,b_n]}$ such that (\ref{eq:main_inclusion}) holds for all possible geodesics $g$ in $\mathbb{H}^2$ simultaneously. This is achieved by careful choice of a set of ties as in the theorem. To establish (\ref{eq:main_inclusion}), we consider different possibilities for the intersection of the arc $[a_n,b_n]$ with the edges of $\tilde{\tau}$. 

\vskip .2 cm

Assume that $[a_n,b_n]$ connects two long sides of a single long rectangle in $\tilde{\tau}$. There is a definite lower bound on the length of $[a_n,b_n]$ in terms of $w_{*}$ and $l^{*}$ (cf. Lemma \ref{lem:size_rectangles}).
Lemma \ref{lem:nested_cones_for_bounded_distances} gives a numerical bound on the distance of $R_{[a_n,b_n]}$ from the identity (which is a constant multiple of the length of $[a_n,b_n]$)  such that (\ref{eq:main_inclusion}) holds. 

Denote by $T^c_g$ the hyperbolic isometry with the axis $g$ and the translation length $c\in\mathbb{C}$. 
The isometry $R_{[a_n,b_n]}$ is given by (cf. Bonahon \cite{Bon2} and \S 4).
$$
R_{[a_n,b_n]}=\lim_{i\to\infty}B_1B_2\cdots B_iR_{P_{b_n}}
$$ 
where 
$$
B_j=R_{g_{d_j}^{P_{a_n}}}^{\beta (P_{a_n},P_{d_j})} R_{g_{d_j}^{P_{b_n}}}^{-\beta (P_{a_n},P_{d_j})}
$$
with $P_{d_j}$ being the plaque containing the gap $d_j$, $\beta (P_{a_n},P_{d_j})$ being the $\beta$-mass of a closed arc with endpoints in $P_{a_n}$ and $P_{d_j}$
for $j=1,\ldots ,i$; and where
$$
R_{P_{b_n}}=R^{\beta ([a_n,b_n])}_{g_{P_{b_n}}^{P_{a_n}}}.
$$

Lemma \ref{lem:const_in_norm} and \ref{lem:bound_on_norm} give estimates for $B_j$ and $T_{P_{b_n}}$ which allows us to conclude that $\|R_{[a_n,b_n]}-Id\|$ is bounded by a linear function with variables $|\beta ([a_n,b_n])|\leq \|\beta\|_{max}$, $\|\beta\|_{var_{\delta}}$ and $\delta >0$ (cf. \S 6). This allows us to choose universal $\epsilon >0$ and $\delta >0$ such that the assumptions of Lemma \ref{lem:nested_cones_for_bounded_distances} are satisfied and we obtain 
(\ref{eq:main_inclusion}) in this case.

\vskip .2 cm

Assume that $[a_n,b_n]$ enters the edge $E_1$ at a short side, then enters the adjacent edge $E_2$ at a short side, and it exits $E_2$ at a long edge. The composition
$$
B_1B_2\cdots B_i$$
is estimated in the same fashion as above. However, the hyperbolic rotation
$$
R_{{P_{b_n}}}
$$
can have arbitrary large angle $\beta ([a_n,b_n])$ since $\beta$ is finitely additive transverse cocycle and $[a_n,b_n]$ does not cross the set of  all geodesics following a single edge of $\tilde{\tau}$. In order to have a control on where $\mathcal{C}(b_n,g_1,\pi /2)$ is mapped by $R_{{P_{b_n}}}$, it is necessary that the angles of intersections between the geodesics of $\tilde{\lambda}$ and $[a_n,b_n]$ are small (cf. Lemma \ref{lem:rotation_large_angle}). The angles are made small enough by the choice of $w^{*}$ and $l^{*}$ in the main theorem. Thus we have the inclusion of the cones again. All other cases are dealt by combining the above two case with slightly larger constants which gives the proof of the main theorem (cf. \S 6).

\vskip .2 cm

\paragraph{\it Acknowledgements.} I am grateful to F. Bonahon and V. Markovic for various discussions regarding the previous version of this article. I am also grateful to the referee for his comments.

\section{Geodesic laminations}

Given a hyperbolic surface and a (maximal) geodesic lamination on the surface, 
we define a ``metric'' train track using rectangles as edges and sides at which two rectangles meet (called short sides of rectangle) as vertices. The geodesic lamination will be contained in the interior of the union of rectangles and the angles at which geodesics of the lamination meet the vertices (i.e. short sides of rectangles) are bounded away from $0$ and $\pi$. In the lemma below, we find a lower bound on the distance between two opposite sides of a rectangle(the sides that are not short) in terms of the diameters of the rectangle and  the lengths of the short sides.

\vskip .2 cm

Let $S$ be a closed hyperbolic surface and $\lambda$ a maximal geodesic
lamination on $S$. Each component of
$S\setminus\lambda$, called {\it plaque} of $\lambda$, is an ideal hyperbolic triangle
for the path metric of the complement.
Let $\{ h_1,\ldots ,h_m\}$ be a set of finite geodesic arcs on $S$ with endpoints in the plaques of $\lambda$ such that each geodesic of $\lambda$ is divided into finite length arcs by the set $\cup_{i=1}^mh_i$. The family of arcs $\lambda\setminus\cup_{i=1}^mh_i$ consists of finitely many homotopy classes relative $\{ h_1,\ldots ,h_m\}$ and we assume that after identifying all the arcs of the homotopy classes the obtained ``topological train track'' has the property that each vertex (corresponding to some $h_i$) is either  trivalent or bivalent. The usual definition of train tracks does not allow bivalent vertices but we do allow them. The reason is that we need more flexibility to obtain a train track which has good geometric properties. 

We form a ``metric train track'' $\tau$ as follows. A {\it gap} of $h_i$ (with respect to $\lambda$) is a connected component of $h_i\setminus\lambda$. If $h_i$ corresponds to a trivalent vertex of the corresponding topological train track, we divide arc $h_i$ into two subarcs $h_i^1$ and $h_i^2$ with a division point in a gap of $h_i$ such that the endpoints of the arcs of $\lambda\setminus\cup_{i=1}^mh_i$ which belong to different homotopy classes lie in different subarcs. For the convenience of notation, denote by $\{ k_1,\ldots ,k_n\}$ the set of all arcs $h_i,h_i^j$ for $i=1,\ldots ,m$ and $j=1,2$.

We connect the endpoints of $\{ k_1,\ldots ,k_n\}$ by geodesic arcs inside the plaques of $\lambda$ (whenever this is possible)  to obtain a finite collection of geodesic quadrilaterals whose two sides are among $k_i$'s and the other two sides are obtained by connecting the chosen points on $k_i$'s inside the components of $S\setminus (\lambda\cup\bigcup_{i=1}^nk_i)$. We call these quadrilaterals, somewhat improperly, {\it long rectangles}. The sides of the rectangles which are among $k_i$'s are said to be {\it short} and the other two sides are said to be {\it long}. The finite collection of long rectangles forms a {\it (metric) train track} $\tau$ on $S$ such that the long rectangles are the {\it edges} of $\tau$ and the {\it switches} of $\tau$ are the arcs $\{ k_1,\ldots ,k_n\}$, where we allow switches to be either trivalent or bivalent. The geodesic lamination $\lambda$ is a subset of the interior of (the union of the edges of) the train track $\tau$ and it is said that $\lambda$ is {\it carried} by $\tau$. The train tack $\tau$ is homotopic to the topological train track. This kind of train tracks were introduced by Bonahon (cf. \cite{Bon1}).

\begin{definition}
Let $l^{*}$ be the maximum of the diameters of the long rectangles of $\tau$ and let $l_{*}$ be the shortest distance between two short sides of the long rectangles. Let 
$w_{*}$ be the minimum of the lengths of the short sides over all long rectangles of $\tau$ and let $w^{*}$ be the maximum of the lengths of the short sides over all long rectangles of $\tau$.
\end{definition}

We impose two conditions on $\tau$. Namely we require that
the angles at the vertices of each long rectangle lie in the interval $[\pi /4,3\pi /4]$
and that $$w^{*}\leq\frac{1}{20}.$$ A (metric) train track which satisfies these conditions is said to be {\it geometric} and the corresponding collection of arcs $\{ k_1,k_2,\ldots ,k_n\}$ is said to be {\it geometric}. We note that the angles are given by the choice of the arcs $\{ h_1,\ldots ,h_m\}$ above and that $w^{*}$ can be made small enough by further subdividing the arcs $\{ h_1,\ldots ,h_m\}$, if necessary.

\begin{lemma}
\label{lem:size_rectangles}
Let $R$ be a long rectangle of a geometric train track $\tau$ with short sides $k_1$ and $k_2$. Then the distance $d$ between the long sides of $R$ satisfies
$$
d\geq\frac{1}{20e^{l^{*}}}
\min\{ |k_1|,|k_2|\},
$$
where $|k_i|$ is the length of $k_i$.
\end{lemma}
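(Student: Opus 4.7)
The plan is to apply standard hyperbolic trigonometry to the quadrilateral $R$. Label the vertices so that $k_1=AB$, $k_2=CD$, with long sides $AC$ and $BD$, and denote the interior angles by $\theta_A,\theta_B,\theta_C,\theta_D\in[\pi/4,3\pi/4]$. Since every interior angle is strictly less than $\pi$, $R$ is a convex hyperbolic quadrilateral, so in particular the opposite sides $AC$ and $BD$ are disjoint. Assume WLOG $|k_1|\le|k_2|$, and let $\ell_1,\ell_2$ be the geodesics containing $AC$ and $BD$.

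First, drop the perpendicular from $B$ to $\ell_1$. The right triangle with hypotenuse $|k_1|$ and angle $\theta_A$ at the vertex $A$ gives the standard identity
\[
\sinh d(B,\ell_1)=\sinh|k_1|\sin\theta_A \ge \tfrac{\sqrt 2}{2}\sinh|k_1| \ge \tfrac{\sqrt 2}{2}|k_1|.
\]
Because $|k_1|\le w^{*}\le 1/20$, $\sinh$ is within a factor arbitrarily close to $1$ of its argument on $[0,1/20]$, so $d(B,\ell_1)\ge \tfrac{\sqrt 2}{4}|k_1|$. The symmetric estimate at the vertex $C$ yields $d(D,\ell_1)\ge \tfrac{\sqrt 2}{4}|k_2|\ge \tfrac{\sqrt 2}{4}|k_1|$. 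By convexity of $R$, the pair $\ell_1,\ell_2$ falls into one of three mutually exclusive configurations: intersecting at a point outside both segments, asymptotic at infinity, or ultraparallel. In the first two configurations $Q\mapsto d(Q,\ell_1)$ is strictly monotone along $BD$, so its minimum is attained at an endpoint and is already $\ge \tfrac{\sqrt 2}{4}|k_1|$, which exceeds $|k_1|/(20 e^{l^{*}})$ for every $l^{*}\ge 0$.

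The delicate case is the ultraparallel configuration in which the foot $Q_0\in\ell_2$ of the common perpendicular lies in the interior of $BD$. Writing $r=d(\ell_1,\ell_2)$ and letting $t$ be signed arc length on $\ell_2$ from $Q_0$, the standard identity
\[
\sinh d(Q,\ell_1)=\sinh r\cdot\cosh t
\]
at $Q=B$, together with $|t_B|\le|BD|\le l^{*}$ (since $Q_0\in BD$) and $\cosh l^{*}\le e^{l^{*}}$, rearranges to
\[
\sinh r=\frac{\sinh d(B,\ell_1)}{\cosh|t_B|}\ge \frac{(\sqrt 2/2)|k_1|}{e^{l^{*}}}.
\]
The right-hand side is at most $1/20$, so unwinding $\sinh^{-1}$ loses at most a factor of $2$ and yields $r\ge \tfrac{\sqrt 2}{4}\cdot|k_1|/e^{l^{*}}\ge |k_1|/(20 e^{l^{*}})$. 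Since $AC\subset\ell_1$, in every case we conclude $d(AC,BD)\ge \min_{Q\in BD}d(Q,\ell_1)\ge |k_1|/(20 e^{l^{*}})$, as required.

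The hard part is precisely the interior-minimum case above, where \emph{a priori} the two long sides could come arbitrarily close even though the endpoints $B$ and $D$ are bounded away from $\ell_1$; the diameter bound $|BD|\le l^{*}$ is exactly what controls the loss factor $\cosh|t_B|$ in the identity $\sinh r\cdot\cosh t=\sinh d$ and produces the exponential factor $e^{-l^{*}}$ in the conclusion.
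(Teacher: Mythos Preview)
Your proof is correct and follows essentially the same strategy as the paper: drop perpendiculars from corners of $R$ on $l_2$ to the opposite long geodesic $l_1$, bound their lengths below via $\sinh d(B,\ell_1)=\sinh|k_1|\sin\theta_A\ge\tfrac{1}{\sqrt2}|k_1|$, and then convert this into a bound on the distance between $l_1$ and $l_2$ using a hyperbolic identity that brings in the long-side length $\le l^{*}$.

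The presentations differ slightly. The paper, after bounding the perpendiculars $h_1,h_2$, shrinks $R$ to equalize $|h_1|=|h_2|$ (implicitly passing to a Saccheri-type quadrilateral) and then applies a Lambert-quadrilateral formula relating $|h|$, $|h_i|$, and the half-diameter. You instead keep the full geodesics $\ell_1,\ell_2$ and do an explicit trichotomy (intersecting, asymptotic, ultraparallel), using the clean identity $\sinh d(Q,\ell_1)=\sinh r\cdot\cosh t$ in the ultraparallel case. Your version has the advantage of handling transparently the situations the paper's ``by possibly decreasing $R$'' glosses over, in particular the non-ultraparallel configurations where no common perpendicular exists; your convexity observation that $\ell_1\cap BD=\emptyset$ (since a convex polygon lies on one side of each edge-line and the angle constraint rules out degenerate collinearity) is exactly what makes the monotone-endpoint argument go through there. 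Both routes yield constants comfortably better than $1/20$.
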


\begin{proof}
Let $k_1$ and $k_2$ be the short sides of $R$, and let $l_1$ and $l_2$ be the long sides of $R$. Denote by $h_1$ and $h_2$ orthogonal arcs from $k_1\cap l_2$ and $k_2\cap l_2$ onto $l_1$, respectively. The hyperbolic sine formula, the bounds on the angles at the vertices of $R$ and the mean value theorem give
$$
|h_i|\cosh 1\geq\sinh |h_i|\geq \frac{1}{\sqrt{2}}\sinh |k_i|\geq  \frac{1}{\sqrt{2}}|k_{i}|
$$
which gives
$$
|h_i|\geq\frac{1}{\sqrt{2}\cosh 1}|k_{i}|.
$$
By possibly decreasing $R$, we can assume that $h_1$ and $h_2$ have the same length $|h_1|=|h_2|\geq \frac{1}{\sqrt{2}\cosh 1}\min\{|k_{1}|,|k_2|\}$. Let $h$ be the arc which is orthogonal to both $l_1$ and $l_2$. An elementary hyperbolic geometry formula applied to the rectangle whose two sides are $h_1$ and $h$ gives
$$
|h|\cosh 1\geq\sinh |h|\geq\frac{\sinh\frac{\min\{|k_{1}|,|k_2|\}}{\sqrt{2}\cosh 1}}{\sqrt{\sinh^2\frac{l^{*}+4\min\{|k_{1}|,|k_2|\}}{2}\cosh^2( \min\{|k_{1}|,|k_2|\})+1}}
$$
which in turn gives
$$
d\geq |h|\geq\frac{1}{2(\cosh^2 1)e^{l^{*}+1}}\min\{|k_{1}|,|k_2|\}.
$$
\end{proof}

\section{Transverse cocycles to geodesic laminations}

In this section we define finitely additive transverse cocycles to a geodesic lamination 
$\lambda$ on the surface $S$ (cf. Bonahon \cite{Bon1}). The transverse cocycles that we consider are $\mathbb{R}$-valued, $(\mathbb{R}/2\pi \mathbb{Z})$-valued and $(\mathbb{C}/2\pi i\mathbb{Z})$-valued.

\begin{definition} \cite{Bon2}
A {\it real valued transverse cocycle} $\alpha$ to $\lambda$ is an
assignment of a finitely additive real valued measure to each 
arc $k$ transverse to $\lambda$ which is invariant under homotopies relative
$\lambda$. Namely, $\alpha$ assigns a real number $\alpha (k)$ to
each closed arc $k$ transverse to $\lambda$ whose endpoints are in
$S-\lambda$ such that if $k'$ is an arc homotopic to $k$ relative
$\lambda$ then $\alpha (k)=\alpha (k')$. Moreover if $k=k_1\cup
k_2$, where $k_1$ and $k_2$ are transverse arcs to $\lambda$ with
disjoint interiors, then $\alpha (k)=\alpha (k_1)+\alpha (k_2)$.
\end{definition}

\begin{remark}
Let $\alpha$ be a finitely additive transverse cocycle  to a geodesic lamination $\lambda$. Let $k$ be a closed arc  transverse to $\lambda$ such that $|\alpha (k)|\neq 0$ and $\alpha$ is not countably additive on $k$. Then there exists a sequence of subarcs $\{ k_n\}$ of $k$ such that 
$$|k_n|\to 0$$  and 
$$|\alpha (k_n)|\to\infty$$ as $n\to\infty$. This phenomenon does not appear for the countably additive transverse cocycles and it forces delicate arguments when working with finitely additive cocycles (cf. \cite{Bon1}, \cite{Bon2}, \cite{Bon3}).
\end{remark}

A hyperbolic metric on $S$ induces a real valued transverse cocycle to a maximal geodesic lamination $\lambda$ as follows
(cf. \cite{Bon2}). 

Each complementary triangle (i.e. plaque)
of $\lambda$ can be partially foliated by three families of horocyclic arcs with centers 
at the vertices of the complementary triangle such that the portion which is not foliated is a finite triangle containing the center of the complementary ideal hyperbolic triangle. 

Every point  of every boundary geodesic of a plaque lies in exactly one
horocyclic leaf of the partial foliation except for the vertices of the finite triangle where two  horocyclic leaves meet (cf. Bonahon \cite{Bon1}). The partial foliation of the plaques extends to the leaves of $\lambda$ by the continuity and the surface is foliated except for the finite triangles (with horocyclic boundaries) inside the plaques. Note that the boundary geodesics of the plaques are leaves of $\lambda$. However, in general, geodesic laminations can have uncountably many leaves and the extension of the foliation is non trivial (cf. \cite{Bon2}).

Lift a maximal geodesic lamination $\lambda$ to maximal geodesic lamination $\tilde{\lambda}$ of the hyperbolic plane $\mathbb{H}^2$.
Given a closed hyperbolic arc $k$ on $S$ with endpoints in plaques of $\lambda$ which transversely intersects $\lambda$, denote by $\tilde{k}$ its lift to $\mathbb{H}^2$. Let $P_1$ and $P_2$ be the plaques of $\tilde{\lambda}$ which contain the endpoints of $\tilde{k}$. Let $p_i$ be the vertex of the central triangle of $P_i$ on the boundary of $P_i$ facing $P_{i+1}$, for $i=1,2$ (where $i+1$ is taken modulo $2$). Let $p_1'$ be the point where the leaf of the (lifted) horocyclic foliation through the point $p_1$ meets the boundary of $P_2$ facing $P_1$. The value of the transverse cocycle to the arc $k$ is the signed distance between $p_2$ and $p_1'$ when the geodesic through them is oriented as a part of the boundary of $P_2$ (cf. \cite{Bon2}).

A finitely additive real valued transverse cocycle to a maximal geodesic lamination $\lambda$ induces a transverse H\"older distribution to $\lambda$ (i.e. a linear functional on H\"older continuous functions on each transverse arc to $\lambda$ that  is invariant under homotopies relative $\lambda$). Conversely, a transverse H\"older distribution to $\lambda$ induces a real valued finitely additive transverse cocycle to $\lambda$ (cf. \cite{Bon3}). 

Denote by $\mathcal{H}(\lambda ,\mathbb{R})$ the space of real valued finitely additive transverse cocycles to $\lambda$. The subset of transverse cocycles which arise from hyperbolic metrics on $S$ is an open cone in $\mathcal{H}(\lambda ,\mathbb{R})$ denoted by $\mathcal{C}(\lambda )$ (cf. \cite{Bon2}). In fact, $\mathcal{C}(\lambda )$ parametrizes the Teichm\"uller space $T(S)$ of the closed surface $S$ (cf. \cite{Bon2}). Given a point in $\mathcal{C}(\lambda )$, there is a procedure of recovering the corresponding metric on $S$ by constructing the corresponding 
representation of the fundamental group $\pi_1(S)$ using the transverse cocycle (cf. \cite{Bon2}). 

\vskip .2 cm

We will also need finitely additive transverse cocycles which take values in
$\mathbb{R}/2\pi\mathbb{Z}$ and in $\mathbb{C}/2\pi i\mathbb{Z}$.

\begin{definition} \cite{Bon2}
An $(\mathbb{R}/2\pi\mathbb{Z})$-{\it valued transverse cocycle}
$\beta$ for $\lambda$ is an assignment of $\beta (k)\in
\mathbb{R}/2\pi\mathbb{Z}$ to each transverse arc $k$ to $\lambda$
which is invariant under homotopy relative $\lambda$ and which is
finitely additive. For example, an
$(\mathbb{R}/2\pi\mathbb{Z})$-valued transverse cocycle $\beta$ is
obtained by taking a real valued transverse cocycle $\alpha$ and
setting $\beta (k):=\alpha (k)\mod (2\pi\mathbb{Z})$.

Similarly, a $(\mathbb{C}/2\pi i\mathbb{Z})$-{\it valued transverse cocycle}
$\beta$ for $\lambda$ is an assignment of $\beta (k)\in
\mathbb{R}/2\pi\mathbb{Z}$ to each transverse arc $k$ to $\lambda$
which is invariant under homotopy relative $\lambda$ and which is
finitely additive.
\end{definition}

A {\it pleated surface} with the {\it pleating locus} $\lambda$ is a continuous map
$$
\tilde{f}:\mathbb{H}^2\to\mathbb{H}^3
$$
which is an isometry on each leaf and on each plaque of the lift $\tilde{\lambda}=\pi^{-1}(\lambda )$ for the universal covering $\pi :\mathbb{H}^2\to S$, and which conjugates the covering group $G$ of $S$ into a subgroup $fGf^{-1}$ of $PSL_2(\mathbb{C})$. 

Each plaque of $\tilde{\lambda}$ is isometrically mapped by $\tilde{f}$ to an ideal hyperbolic triangle in $\mathbb{H}^3$ and the amount of bending along $\tilde{\lambda}$ is measured by an $(\mathbb{R}/2\pi \mathbb{Z})$-valued finitely additive transverse cocycle to $\tilde{\lambda}$. The pleating map can be recovered from an $(\mathbb{R}/2\pi \mathbb{Z})$-valued finitely additive transverse cocycle by constructing the isometries of $\mathbb{H}^3$ which are equal to the restriction of the pleating map on the given plaque (cf. Bonahon \cite{Bon1} and \S 4). Moreover, a pleating map with the pleating locus $\lambda$ induces a $(\mathbb{C}/2\pi i\mathbb{Z})$-valued finitely additive transverse cocycle to $\tilde{\lambda}$ with the real part determining the path metric on $\tilde{f}(\mathbb{H}^2)$(when considered as a subset of the hyperbolic three space $\mathbb{H}^3$) and the imaginary part giving the amount of bending along $\tilde{\lambda}$.

\vskip .2 cm

Let $\alpha$ be either an $\mathbb{R}$-valued or an
$(\mathbb{R}/2\pi\mathbb{Z})$-valued transverse cocycle for $\lambda$ and let $\tau$ be a geometric train track that carries $\lambda$.
Given an edge $E\in\tau$, let $k_E$ be a geodesic arc which connects the two long boundary sides of $E$. Define
$\alpha (E):=\alpha (k_E)$. Note that $\alpha (E)$ is independent of
the choice of $k_E$ by the invariance of $\alpha$ under homotopy relative $\lambda$. The transverse cocycle $\alpha$ is
completely determined by the values $\alpha (E)$, $E\in\tau$(cf.
\cite{Bon1}).

\section{The realizations of $\mathbb{R}$-valued and $(\mathbb{R}/2\pi\mathbb{Z})$-valued transverse cocycles}

The purpose of this section is to recall the procedure of constructing the realizations of $\mathbb{R}$-valued and  $(\mathbb{R}/2\pi\mathbb{Z})$-valued transverse cocycle to a maximal geodesic lamination on a closed surface $S$ (cf. \cite{Bon2}). In the former case we obtain a representation of $\pi_1(S)$ into $PSL_2(\mathbb{R})$ which gives a hyperbolic metric on $S$. In the later case, we start from a hyperbolic metric on $S$ and obtain a pleated surface with the pleating locus $\lambda$ and the bending amount given by the  $(\mathbb{R}/2\pi\mathbb{Z})$-valued transverse cocycle. The core argument in the proof of the main theorem is estimating the pleating map arising from an  $(\mathbb{R}/2\pi\mathbb{Z})$-valued transverse cocycle which satisfies certain geometric conditions.

We consider a hyperbolic surface $S$ and a maximal geodesic
lamination $\lambda$ of $S$. Bonahon \cite{Bon2} defined an
injective map from the Teichm\"uller space $T(S)$ of $S$ into the
space $\mathcal{H}(\lambda ,\mathbb{R})$ of all $\mathbb{R}$-valued
transverse cocycles to a fixed maximal geodesic lamination $\lambda$ which is a
homeomorphism onto an open cone $\mathcal{C}(\lambda )$ of
$\mathcal{H}(\lambda ,\mathbb{R})$. 

Denote by $\sigma_0$ a fixed
hyperbolic metric on $S$. Then $\sigma_0$ represents the base point in
$T(S)$ and let $\alpha_0\in\mathcal{H}(\lambda ,\mathbb{R})$ be the
corresponding transverse cocycle. Then \cite[Proposition
13]{Bon2} any other real-valued cocycle $\alpha\in
\mathcal{H}(\lambda ,\mathbb{R})$ which is close enough to
$\alpha_0$ is also in the image of $T(S)$ in $\mathcal{H}(\lambda
,\mathbb{R})$. Namely, when the difference $\alpha -\alpha_0$ is
 small in the sense that the norm $\|\alpha -\alpha_0\|_{\max}$ is
small, where
$$\|\alpha\|_{\max}
:=\max_{E}|\alpha (E)|$$ and the maximum is over all edges $E$ of a train track
$\tau$ that carries $\lambda$ then $\alpha$ determines a point in $T(S)$.

The proof of the above statement is given by constructing the
realization of $\alpha_1$ starting from the realization of $\alpha_0$.
We recall that $\lambda$ is a maximal geodesic lamination for the
metric $\sigma_0$. We lift $\lambda$ to a geodesic lamination
$\tilde{\lambda}$ of the universal covering $\mathbb{H}^2$.
Components of $\mathbb{H}^2\setminus\tilde{\lambda}$ are called {\it
plaques} of $\tilde{\lambda}$ and they are lifts of plaques (i.e. connected components of $S\setminus\lambda$) of
$\lambda$. Each plaque of $\tilde{\lambda}$ is an ideal hyperbolic
triangle. 

Let $k$ be an oriented geodesic arc in $\mathbb{H}^2$ from
plaque $P$ to plaque $Q$ of $\tilde{\lambda}$. Denote by
$\mathcal{P}_{P,Q}$ be the set of all plaques of $\tilde{\lambda}$
that separate $P$ and $Q$, and by $\mathcal{P}=\{P_1,P_2,\ldots
,P_n\}$ a finite subset of $\mathcal{P}_{P,Q}$ such that the
indices increase from $P$ to $Q$. Let $g_i^P$ and $g_i^Q$ be the
geodesics on the boundary of the plaque $P_i$ that separate $P_i$
from $P$ and $Q$, respectively. Let $g_Q^P$ be the geodesic on the
boundary of $Q$ that separates $P$ and $Q$. Define $\alpha=\alpha_1-\alpha_0$. Let $\alpha (P,P_i)$
denote the $\alpha$-mass of a geodesic arc with endpoints in $P$ and
$P_i$; similar definition for $\alpha (P,Q)$. 

Let
$$
D_{P_i}=T_{g_i^P}^{\alpha (P,P_i)}T_{g_i^Q}^{-\alpha
(P,P_i)}
$$
where $T_{g_i^P}^{\alpha (P,P_i)}$, $T_{g_i^Q}^{-\alpha (P,P_i)}$ are hyperbolic translations(in $\mathbb{H}^2$)
with axes $g_i^P$, $g_i^Q$ which are oriented to the left as seen from $P$, $Q$ and with
the translation lengths $\alpha (P,P_i)$, $-\alpha (P,P_i)$.
Let
$$
T_Q=T_{g_Q^P}^{\alpha (P,Q)}.
$$

A finite approximation $\varphi_{\mathcal{P}}$ to the realization of the transverse cocycle $\alpha_1$ corresponding to a finite set of plaques $\mathcal{P}$ is given by
$$
\varphi_{\mathcal{P}}=D_{P_1}D_{P_2}\cdots
D_{P_n}T_Q.
$$

The realization $\varphi_{P,Q}$ of the transverse cocycle $\alpha$ is given by (cf.
\cite{Bon2})
$$\varphi_{P,Q}=\lim_{\mathcal{P}\to\mathcal{P}_{P,Q}}\varphi_{\mathcal{P}}.$$

Let
$$
\psi_{\mathcal{P}}=D_{P_1}D_{P_2}\cdots
D_{P_n}
$$
and let
$$\psi_{P,Q}=\lim_{\mathcal{P}\to\mathcal{P}_{P,Q}}\psi_{\mathcal{P}}.$$

It follows that
$$
\varphi_{P,Q}=\psi_{P,Q}\circ T_{g_Q^P}^{\alpha (P,Q)}.
$$

The quantity $\psi_{P,Q}$ is the {\it difference from
$T_Q$ of the realization $\varphi_{P,Q}$} of hyperbolic metric
$\sigma$ on $S$. Bonahon \cite{Bon2} proved that for a fixed surface $S$
and small {\it norm}
$$\|\alpha_1 -\alpha_0\|_{\max}=\max_{E\in\tau}|\alpha_1 (E)-\alpha_0(E)|$$ the difference $\psi_{P,Q}$ always lies in a compact
subset of $PSL_2(\mathbb{R})$. 

\vskip .2 cm

Our main interest are bending pleated surfaces. The bending of (abstract) pleated surfaces with the
pleating locus $\lambda$ is completely determined by an
$(\mathbb{R}/2\pi\mathbb{Z})$-valued transverse cocycles for the
geodesic lamination $\lambda$ on $S$ and each
$(\mathbb{R}/2\pi\mathbb{Z})$-valued transverse cocycle to $\lambda$
is realized by an abstract pleated surface with pleating locus
$\lambda$ (cf. \cite{Bon2}). Denote by $\mathcal{H}(\lambda
,\mathbb{R}/2\pi\mathbb{Z})$ the space of all $(\mathbb{R}/2\pi\mathbb{Z})$-valued finitely additive transverse cocycles to the lamination $\lambda$. The space of all
abstract pleated surfaces with the pleating locus $\lambda$ is
parametrized by $\mathcal{C}(\lambda )\oplus i\mathcal{H}(\lambda
,\mathbb{R}/2\pi\mathbb{Z})$, where $\mathcal{C}(\lambda )$ is the
open cone in $\mathcal{H}(\lambda ,\mathbb{R})$ which parametrizes
the Teichm\"uller space $T(S)$ (cf. \cite{Bon2}).

Let $\beta\in\mathcal{H}(\lambda ,\mathbb{R}/2\pi\mathbb{Z})$ and denote by $R_g^a$ hyperbolic rotation in $\mathbb{H}^3$ with the axis $g$ and the rotation angle $a\in\mathbb{R}$. Let $\mathcal{P}_{P,Q}$ be the set of all plaques of $\tilde{\lambda}$ separating plaques $P$ and $Q$, and let $\mathcal{P} =\{ P_1,P_2,\ldots ,P_n\}$ be a finite subset of $\mathcal{P}_{P,Q}.$

Define
$$
B_{P_i}=R_{g_i^P}^{\beta (P,P_i)}R_{g_i^Q}^{-\beta
(P,P_i)}
$$
where $P_i\in\mathcal{P}$, $\beta (P,P_i)$ the $\beta$-mass of a geodesic arc connecting $P$ and $P_i$, $g_i^P$ the geodesic on the boundary of $P_i$ facing $P$, and $g_i^Q$ the geodesic on the boundary of $P_i$ facing $Q$. 
Define
$$R_Q=R_{g_Q^P}^{\beta
(P,Q)}.
$$

A finite approximation $\varphi_{\mathcal{P}}$  of the
realization of the bending cocycle $\beta$ is defined by 
$$
\varphi_{\mathcal{P}}=B_{P_1}B_{P_2}\cdots B_{P_n}R_Q
$$
and the realization $\mathcal{P}_{P,Q}$ of $\beta$ is defined by (cf. \cite{Bon2})
$$\varphi_{P,Q}=\lim_{\mathcal{P}\to\mathcal{P}_{P,Q}}\varphi_{\mathcal{P}}$$

Note that the
realization exists for all $\beta\in\mathcal{H}(\lambda
,\mathbb{R}/2\pi\mathbb{Z})$ because of the compactness of
$\mathbb{R}/2\pi\mathbb{R}$. The {\it difference from
$R_Q$ of the realization} of $\beta$ is given by
\begin{equation}
\label{eq:diff_for_realization}
\psi_{P,Q}=\lim_{\mathcal{P}\to\mathcal{P}_{P,Q}}\psi_{\mathcal{P}}
\end{equation}
where
\begin{equation}
\label{eq:diff_for_realization_1}
\psi_{\mathcal{P}}=B_{P_1}B_{P_2}\cdots B_{P_n}.
\end{equation}

The bending map $\tilde{f}:\mathbb{H}^2\to\mathbb{H}^3$ is defined by fixing a plaque $P$ and setting $\tilde{f}|_Q=\varphi_{P,Q}$ for any plaque $Q$. Note that $\tilde{f}|_P=id$.

Let $\tilde{f}:\mathbb{H}^2\to\mathbb{H}^3$ be the bending map for
the bending pleated surface defined by $\beta$, where $\mathbb{H}^2$
is identified with the $(xz)$-half-plane in the upper half-space
$\mathbb{H}^3=\{ (z,t):z\in\mathbb{C},t>0\}$. Then $\tilde{f}$ does
not necessarily extend to an injective map from
$\partial_{\infty}\mathbb{H}^2$ into $\partial_{\infty}\mathbb{H}^3$. The core argument in the proof of the main theorem establishes that the geometric
 conditions on $\beta$ guarantee
injectivity of the continuous extension $\tilde{f}:\partial_{\infty}\mathbb{H}^2\to\partial_{\infty}\mathbb{H}^3$ (which is achieved in \S 6).

\section{The nested cones}

In this section we prove several lemmas needed in the proof of the main theorem in \S 6. As discussed in Introduction, the main argument establishes the injectivity of the bending map $\tilde{f}:\mathbb{H}^2\to\mathbb{H}^3$. Namely, given $x,y\in\partial_{\infty}\mathbb{H}^2$ with $x\neq y$, we need to prove that $\tilde{f}(x)\neq\tilde{f}(y)$. We consider the geodesic $g$ in $\mathbb{H}^2$ whose ideal endpoints are $x$ and $y$, and prove that the image of $g$ under the bending map $\tilde{f}$ behaves well enough to have distinct endpoints in $\partial_{\infty}\mathbb{H}^3$ (cf. \S 6). 

Let $p\in g$ be a point in a plaque of $\tilde{\lambda}$, and let $g_1,g_2$ be the two geodesic rays that $g$ is divided into by the point $p$.
We consider two hyperbolic cones both having vertex $p$, angle $\pi /2$ and axes $g_1$ and $g_2$. The open shadows of the two cones are disjoint (in fact, each open shadow is a component of the complement of the boundary circle of an embedded hyperbolic plane in $\mathbb{H}^3$ containing point $p$ and orthogonal to $g$), and $x$ belongs to one and $y$ belongs to the other shadow. In order to prove that $\tilde{f}(x)\neq\tilde{f}(y)$, we normalize $\tilde{f}$ to be the identity on the plaque containing $p$ and prove that the image $\tilde{f}(g_i)$, for $i=1,2$, stays in the corresponding hyperbolic cone.

To do so, we divide $g_i$ into disjoint subarcs and consider a sequence of nested cones at the endpoints of these subarcs with the same angle $\pi /2$. 
The goal is to prove that the image under $\tilde{f}$ of the nested sequence of cones remains nested. It is enough to consider the consecutive hyperbolic cones and prove they stay embedded.

There are essentially two different phenomenon that can occur for the transverse cocycle $\beta$ along the sequence of arcs on $g_i$. Each arc either intersects an edge $E$ of the geometric train track such that it connects the long sides of the long rectangle $E$(in which case the arc intersects exactly the set of geodesics of $\tilde{\lambda}$ traversing $E$) or it connects two short sides of a long rectangle $E$ while intersecting only a portion of geodesic that traverse $E$.

In Lemma \ref{lem:nested_cones_for_bounded_distances}, we prove that if an isometry $A$ of $\mathbb{H}^3$ is close to the identity then the image of the inside cone stays in the outside cone as long as the distance between the vertices of the two cones is comparable to the size of $\| A-id\|$. Lemma \ref{lem:small_mobius_action} is used in the proof of Lemma \ref{lem:nested_cones_for_bounded_distances}. 

In the former case, the realization of the cocycle $\beta$ is on the distance from the identity comparable to the quantities $\|\beta\|_{\max}$ and $\|\beta\|_{var_{\delta}}$ by Lemma 5.4 and 5.5. Since the arc connects two long sides of a long rectangle, Lemma 2.1 applies to conclude that there is a lower bound on the length of the arc. Then Lemma \ref{lem:nested_cones_for_bounded_distances} implies the desired nesting of the cones. 

In the later case, the $\beta$-mass of the arc can be arbitrary large. In order to estimate the realization $\varphi_{P,Q}$ where $P$ and $Q$ are plaques containing the endpoints of the arc, we recall $\varphi_{P,Q}=\psi_{P,Q}R_Q$. The isometry $\psi_{P,Q}$ is approximated by $\psi_{\mathcal{P}}=B_1B_2\cdots B_n$ and the above argument proves that $\psi_{P,Q}$ is close enough to the identity when $\|\beta\|_{max}$ and $\|\beta\|_{var_{\delta}}$ are small enough. The rotation $R_Q$ does not have small angle since the $\beta$-mass can be large on the arc and Lemma 5.4 does not apply. Instead, since the distance between the short sides of each long rectangle is long, Lemma 5.3 gives the nesting of the cones in this case. We note that we do not have this phenomenon when the transverse cocycle is countably additive (i.e. it is a measure). 

\vskip .2 cm

Let $g\subset\mathbb{H}^3$ be a geodesic ray with initial point
$p_0$, and let $p$ be a point on the ray $g$. For $0<\theta <\pi$, the {\it
cone $\mathcal{C}(p,g,\theta )$ with vertex $p$, axis $g$ and angle
$\theta$} is the set of all $w\in\mathbb{H}^3$ such that the angle
at $p$ between the positive direction of $g$ and the geodesic ray
from $p$ through $w$ is less than $\theta$. A non-zero vector
$(p,v)\in T^1(\mathbb{H}^3)$ uniquely determines a geodesic ray
$g$ with the basepoint $p$ tangent to
$v$. By definition $\mathcal{C}(p,v,\theta )$ is 
$\mathcal{C}(p,g,\theta )$. The {\it shadow of the cone}
$\mathcal{C}(p,g,\theta )$ is the set
$\partial_{\infty}\mathcal{C}(p,g,\theta )$ of endpoints at
$\partial_{\infty}\mathbb{H}^3$ of all geodesic rays starting at $p$
and inside $\mathcal{C}(p,g,\theta )$.

For $d>0$, let $p_d\in g$ be the point on $g$ which is on the
distance $d$ from $p_0=p$. Let $\eta >0$ be the maximal angle such
that $\mathcal{C}(p_d,g,\eta )\subset\mathcal{C}(p_0,g,\theta )$.
Then $\eta =\eta (d,\theta )$ is a continuous function of $d$ and
$\theta$. For a fixed $0<\theta <2\pi$, we have $\eta (d,\theta
)>\theta$ and $\eta (d,\theta )\to\theta$ as $d\to 0$ (cf. \cite{KeeSe}).

We use quaternions to represent the upper half-space model
$\mathbb{H}^3=\{ z+tj:z\in\mathbb{C},t>0\}$ of the hyperbolic
three-space $\mathbb{H}^3$ (cf. Beardon \cite{Bear}). The space of isometries of
$\mathbb{H}^3$ is identified with $PSL_2(\mathbb{C})$ which is
equipped with the norm
$$
\| A\| =\max\{ |a|+|b|,|c|+|d|\}
$$
where $A(z)=\frac{az+b}{cz+d}\in PSL_2(\mathbb{C})$ and $ad-bc=1$.
The Poincar\'e extension to $\mathbb{H}^3$ of the action of $A\in
PSL_2(\mathbb{C})$ on $\hat{\mathbb{C}}$ is computed in \cite{Bear}
to be
$$A(z+tj)=\frac{(az+b)\overline{(cz+d)}+a\bar{c}t^2+tj}{|cz+d|^2+|c|^2t^2}.
$$

An isometry of $\mathbb{H}^3$ which is close to the identity moves
points on a bounded distance from $j\in\mathbb{H}^3$ by a small
amount and the tangent vectors are rotated by a small angle with
respect to the Euclidean parallel transport in $\mathbb{R}^3$. In Lemma \ref{lem:small_mobius_action}, we
give a quantitative statement of the above fact including the situation
when the points are not on the bounded distances from
$j\in\mathbb{H}^3$.

Given $p=z+tj\in\mathbb{H}^3$, we define
$$
ht(p)=t
$$
and
$$
Z(p)=z.
$$

Denote by $T_p\mathbb{H}^3$ the tangent space at the point $p\in\mathbb{H}^3$. The unit tangent space $T^1_p\mathbb{H}^3$ at the point $p$ is the quotient $(T_p\mathbb{H}^3-\{ 0\})/\mathbb{R}^{+}$ of non-zero tangent vectors $T_p\mathbb{H}^3-\{ 0\}$ by the positive real numbers $\mathbb{R}^{+}$. If $(p,u)\in T_p\mathbb{H}^3$, then $(p,u)/\sim =\{ (p,tu)|t\in\mathbb{R}^{+}\}$ is the corresponding unit tangent vector. For simplicity of notation, we denote by $(p,u)$ the unit tangent vector corresponding to $(p,u)$. Denote by $T^1\mathbb{H}^3$ the unit tangent space to $\mathbb{H}^3$.

Given $(p,v),(q,w)\in T^1\mathbb{H}^3$, define the distance on $T\mathbb{H}^3$ by
$$
D_{T^1\mathbb{H}^3}((p,u),(q,v))=\max\{ |ht(p)-ht(q)|,|Z(p)-Z(q)|,|\angle (u,v)|\}$$
where $\angle (u,v)$ is the angle between the vectors $u$ and the euclidean parallel translate of $v$ at the point $p$.

If an isometry of $\mathbb{H}^3$ is close to the identity, then the unit tangent vectors in a compact subset of $T^1\mathbb{H}^3$ are moved by a small amount. In the lemma below, we give a quantitative statement which will be used in the proof of Lemma \ref{lem:nested_cones_for_bounded_distances}.

\begin{lemma}
\label{lem:small_mobius_action} Let $m_0>0$ and $C>0$ be fixed. Define $\eta'(m_0,C)=\min\{\frac{1}{10(C+1)m_0},\frac{1}{4}\}$. If $0\leq\eta <\eta'$, $0<m\leq m_0$,  $A\in PSL_2(\mathbb{C})$ with
$$
\| A-Id\|<\eta m
$$
and $(p,u)\in T^1\mathbb{H}^3$ such that 
$$
D_{T^1\mathbb{H}^3}((p,u),(e^{-m}j,-j))<C\eta m
$$
then
$$
D_{T^1\mathbb{H}^3}(A(p,u),(e^{-m}j,-j))< C_1\eta m
$$
where $C_1=2\pi (60C+9)$.
\end{lemma}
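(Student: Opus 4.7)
The plan is a direct quantitative computation using the Poincar\'e extension formula. From $\|A-Id\|<\eta m$ with $A=\begin{pmatrix}a&b\\c&d\end{pmatrix}$ normalized by $ad-bc=1$, extract coefficient bounds $|a-1|,|b|,|c|,|d-1|<\eta m$. Writing $p=z+tj$, the hypothesis $D_{T^1\mathbb{H}^3}((p,u),(e^{-m}j,-j))<C\eta m$ unpacks to $|z|<C\eta m$, $|t-e^{-m}|<C\eta m$, and $\angle(u,-j)<C\eta m$. The condition $\eta<\eta'(m_0,C)$ forces $C\eta m\leq 1/10$, so all higher-order terms remain small in absolute value and all denominators encountered below stay bounded away from zero.

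The core computation is to control the denominator $D:=|cz+d|^2+|c|^2t^2$ of the Poincar\'e formula. A triangle-inequality expansion yields $|cz+d-1|\leq|c||z|+|d-1|\leq(1+C\eta m)\eta m$ and $|c|^2t^2\leq(\eta m)^2(1+C\eta m)^2$, so $|D-1|\leq K\eta m$ for an explicit constant $K$ (roughly $3$ once $C\eta m\leq 1/10$). Combined with $ht(A(p))=t/D$ and $|t-e^{-m}|<C\eta m$, the triangle inequality gives $|ht(A(p))-e^{-m}|\leq t|1/D-1|+|t-e^{-m}|$, which is linear in $\eta m$ with a constant proportional to $C+1$. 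For the complex coordinate $Z(A(p))=\bigl[(az+b)\overline{(cz+d)}+a\bar{c}t^2\bigr]/D$, expanding the numerator term-by-term shows each summand is $O(\eta m)$ (using $|b|<\eta m$, $|z|<C\eta m$, $t\leq 2$, and $|c|<\eta m$), and division by $D\geq 1-K\eta m\geq 1/2$ keeps the total a small multiple of $\eta m$.

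The angle bound is the most delicate step. Because $A$ is a hyperbolic isometry, the differential $dA_p$ is a positive scalar multiple of a rotation (the hyperbolic metric is conformal to the Euclidean one), so the unit vector $A_{*}(u)$ is obtained from $u$ by a pure rotation in $T_p\mathbb{H}^3$ transported to $T_{A(p)}\mathbb{H}^3$. Computing the rotation either by differentiating the Poincar\'e formula or by evaluating $A(p+\epsilon u)-A(p)$ to first order in $\epsilon$, one sees that the rotation angle is governed by $\arg(cz+d)^{-2}$, which is $O(\eta m)$ since $cz+d$ is within $O(\eta m)$ of $1$. Adding the initial tilt $\angle(u,-j)<C\eta m$ by the triangle inequality $\angle(A_{*}(u),-j)\leq\angle(A_{*}(u),u)+\angle(u,-j)$ then gives the desired $O(\eta m)$ bound.

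The principal obstacle is not conceptual but combinatorial: to arrive at the specific constant $C_1=2\pi(60C+9)$, every intermediate estimate must be carried with explicit constants rather than $O(\cdot)$ notation. The $60C$ contribution tracks the amplification of the initial displacement of $(p,u)$ through the Poincar\'e formula at height $e^{-m}$, the constant $9$ absorbs the pure perturbation of $A$ from the identity, and the factor $2\pi$ converts Euclidean linear displacements into angles on the unit tangent sphere. The chosen threshold $\eta m\leq 1/(10(C+1))$ is precisely what is needed for $D\geq 1/2$ and for all quadratic error terms $(\eta m)^2$, $C(\eta m)^2$, $C^2(\eta m)^2$ to be dominated by linear terms with room to spare.
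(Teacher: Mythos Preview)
Your plan is essentially the paper's proof: both extract the bounds $|a-1|,|b|,|c|,|d-1|<\eta m$ from $\|A-Id\|<\eta m$, use $\eta<\eta'$ to force $C\eta m\le 1/10$, and then push the Poincar\'e extension formula term by term to get $|ht(A(p))-e^{-m}|\le(2C+1)\eta m$ and $|Z(A(p))|\le 4(C+1)\eta m$.

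The one place where your sketch diverges slightly, and is imprecise, is the angle step. The paper does not argue via conformality and a rotation angle; it writes a representative $v=v_1+v_2i-j$ of $u$, sets $w=A'(p)v$, and bounds the nine real partials $\partial A_k/\partial x$, $\partial A_k/\partial y$, $\partial A_k/\partial h$ of the Poincar\'e extension explicitly, arriving at $|w_1|,|w_2|\le(60C+9)\eta m$ and $|w_3|\ge 1-(4C+6)\eta m$, from which $|\angle(w,-j)|\le 2\pi(60C+9)\eta m$. Your statement that the rotation is ``governed by $\arg(cz+d)^{-2}$'' is the two-dimensional formula and is not literally correct in $\mathbb{H}^3$: the $t$-dependence of the extension (through $a\bar c\,t^2$ in the numerator and $|c|^2t^2$ in the denominator) contributes additional tilt, and the resulting $SO(3)$ rotation need not fix the $j$-axis. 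Those extra contributions are still $O(\eta m)$, so your conformality-plus-triangle-inequality route would go through, but carrying it out with explicit constants amounts to bounding exactly the partial derivatives the paper bounds.
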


\begin{proof}
Let $ht(p)=h$ and $Z(p)=z$. Let $p_1=A(p)$, and $ht(p_1)=h_1$ and $Z(p_1)=z_1$. For $A\in PSL_2(\mathbb{C})$, the Poincar\'e extension (cf. \cite{Bear}) of $A$ is given by the formula
$$
A(P)=\frac{(az+b)\overline{(cz+d)}+a\bar{c}h^2}{|cz+d|^2+|c|^2h^2}+\frac{h}{|cz+d|^2+|c|^2h^2}j.
$$

By the assumption, $|Z(p)-Z(e^{-m}j)|=|Z(p)|=|z|<C\eta m$ and $|ht(p)-ht(e^{-m}j)|=|h-e^{-m}|<C\eta m$. 
We set
$$
\eta'(m_0,C)=\min\big{\{}\frac{1}{10(C+1)m_0},\frac{1}{4}\big{\}}.
$$
If $\eta\leq\eta'(m_0,C)$ then $C\eta m\leq\frac{1}{10}$ and $\eta m\leq\frac{1}{10}$. Moreover, $\| A-Id\|<\eta m$ implies that $|b|,|c|,|a-1|,|d-1|<\eta m$. 

For $p_1=A(p)$, the above inequalities together with some elementary computations give
$$
|ht(p_1)-e^{-m}|\leq (2C+1)\eta m
$$
and
$$
|Z(p_1)|\leq 4(C+1)\eta m.
$$

Let $v=v_1+v_2i-j$ and $w=A'(p)v=w_1+w_2i+w_3j$. The assumption $|\angle (v,-j)|<C\eta m$ implies that $|v_1|,|v_2|<\frac{\pi}{2}C\eta m$. 
Note that
$$
w_k=\frac{\partial A_k}{\partial x}v_1+\frac{\partial A_k}{\partial y}v_2+ \frac{\partial A_k}{\partial h}v_3
$$
for $k=1,2,3$, where $A=A_1+A_2i+A_3j$. 

Some more elementary (and long) computations give
$$
|\frac{\partial A_1}{\partial x}|,|\frac{\partial A_1}{\partial y}|,|\frac{\partial A_2}{\partial x}|,|\frac{\partial A_2}{\partial y}|\leq 15,
$$

$$
|\frac{\partial A_1}{\partial h}|,|\frac{\partial A_2}{\partial h}|\leq 9\eta m,
$$

$$
|\frac{\partial A_3}{\partial x}|,|\frac{\partial A_3}{\partial y}|\leq 12\eta m,
$$
and
$$
|\frac{\partial A_3}{\partial h}|\geq 1-6\eta m
$$ 
at the point $p$.

The above estimates provide
$$
|w_1|,|w_2|\leq (60C+9)\eta m
$$
and
$$
|w_3|\geq 1-(4C+6)\eta m.
$$
Then
\begin{equation*}
|\angle (w,-j)|\leq\frac{\pi (60C+9)}{1-(4C+6)\eta m}\eta m.
\end{equation*}
Since $\eta\leq \frac{1}{(4C+6)m_0}$ we obtain
\begin{equation}
\label{eq:angle_phi}
|\angle (w,-j)|\leq{2\pi (60C+9)}\eta m.
\end{equation}

\end{proof}

Consider cone $\mathcal{C} (j,-j,\frac{\pi}{2})$ with the vertex $j$ and the central 
geodesic ray in the direction of the tangent vector $-j$. Then cone $\mathcal{C}
(e^{-m}j,-j,\frac{\pi}{2})$ is a subset of the above cone. If $(p,u)$ is close enough to $
(e^{-m}j,-j)$ and if $A\in PSL_2(\mathbb{R})$ is close enough to the identity, then the 
cone $\mathcal{C}(A(p),A'(p)u,\frac{\pi}{2})$ is a subset of $\mathcal{C} (j,-j,\frac{\pi}
{2})$.

The following lemma establishes a sufficient quantitative bound on the size of a hyperbolic isometry $A\in PSL_2(\mathbb{C})$ and on the distance of $(p,u)$ to $(e^{-m}j,-j)$ such that $\mathcal{C} (j,-j,\frac{\pi}{2})\supset \mathcal{C}(A(p),A'(p)u,\frac{\pi}{2})$.

\begin{lemma}
\label{lem:nested_cones_for_bounded_distances} Fix $m_0>0$ and $C>0$. Let $\eta''(m_0,C)=\min\{ \frac{e^{-m_0}}{32(C+1)m_0},\frac{e^{-m_0}}{66\pi (60C+9)}\}$. Then  for each $0\leq \eta<\eta''(m_0,C)$, for each $0<m\leq m_0$ and for each $A\in PSL_2(\mathbb{C})$ with
$$
\| A-Id\|<\eta m
$$
we have
$$
\partial_{\infty}\mathcal{C} (j,-j,\frac{\pi}{2})\supset \overline{\partial_{\infty}\mathcal{C}(A(p),A'(p)u,\frac{\pi}{2})},
$$
where $(p,u)\in T^1\mathbb{H}^3$ is such that 
$$
D_{T^1\mathbb{H}^3}((p,u),(e^{-m}j,-j))<C\eta m.
$$
\end{lemma}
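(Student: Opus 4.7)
The plan is to translate the cone containment into a statement about the shadows at infinity, viewed as round Euclidean disks in $\mathbb{C}$. The shadow of $\mathcal{C}(j,-j,\pi/2)$ is the open unit disk (its boundary geodesics from $j$ perpendicular to $-j$ are the horizontal semicircles of Euclidean radius $1$), so the goal is to show that the closed shadow of $\mathcal{C}(A(p),A'(p)u,\pi/2)$ is strictly contained in the open unit disk.

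First I would note that $\eta''(m_0,C)\le \eta'(m_0,C)$ (using $e^{-m_0}/32\le 1/10$ and $e^{-m_0}/(66\pi\cdot 9)\le 1/4$), so that Lemma~\ref{lem:small_mobius_action} applies and yields
$$D_{T^1\mathbb{H}^3}\bigl(A(p,u),(e^{-m}j,-j)\bigr)<C_1\eta m,\qquad C_1=2\pi(60C+9).$$
Writing $A(p)=x_1+y_1 i+h_1 j$ and the Euclidean unit vector along $A'(p)u$ as $(a,b,-c)$, the distance bound unpacks to $\max(|x_1|,|y_1|,|h_1-e^{-m}|)<C_1\eta m$, $\sqrt{a^2+b^2}\le \sin(C_1\eta m)\le C_1\eta m$, and $c\ge \cos(C_1\eta m)$.

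Second, I would give the shadow of the perturbed cone in closed form. Its boundary circle is the ideal boundary of the hyperbolic plane through $p_1:=A(p)$ whose tangent space at $p_1$ is Euclidean-orthogonal to $u_1:=A'(p)u$. Parametrizing this plane as a hemisphere of Euclidean center $(X,Y)\in\mathbb{C}$ and Euclidean radius $R$, and demanding that the Euclidean radial vector $(x_1-X,y_1-Y,h_1)$ at $p_1$ be parallel to $-u_1$, one solves $X=x_1+ah_1/c$, $Y=y_1+bh_1/c$, $R=h_1/c$. Since $u_1$ points mostly downward, the open shadow is the interior disk, and the required containment becomes
$$\sqrt{(x_1+ah_1/c)^2+(y_1+bh_1/c)^2}+h_1/c<1.$$

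Finally, using the perturbation bounds above together with $1/c\le 1+O((C_1\eta m)^2)$ and the elementary inequality $1-e^{-m}\ge me^{-m}\ge me^{-m_0}$ (valid for $m\le m_0$), the left side of the displayed inequality is bounded by $e^{-m}+K\,C_1\eta m$ for some absolute constant $K$, so the problem reduces to $KC_1\eta m<me^{-m_0}$, i.e.\ $\eta<e^{-m_0}/(KC_1)$. The factor $66\pi(60C+9)=33\,C_1$ in the definition of $\eta''$ is chosen precisely to ensure this (for any $K\le 33$). The only real obstacle is the arithmetic bookkeeping confirming that the accumulated error $O(C_1\eta m)$ from perturbing both the vertex and the axis is absorbed by the slack $1-e^{-m}\gtrsim me^{-m_0}$; no further geometric argument is required.
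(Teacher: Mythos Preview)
Your proposal is correct, and the overall strategy---reduce cone containment to an inequality between Euclidean disks in $\partial_\infty\mathbb{H}^3=\mathbb{C}$, then feed in the perturbation bounds from Lemma~\ref{lem:small_mobius_action}---is sound. The bookkeeping you outline does close: with $C_1\eta m<e^{-m_0}m_0/33\le 1/(33e)$ one gets $h_1/c\le e^{-m}+2C_1\eta m$ and hence $\sqrt{X^2+Y^2}+R\le e^{-m}+KC_1\eta m$ with $K$ well below $33$, and the first term in $\eta''$ is needed only to guarantee $\eta''\le\eta'$ so that Lemma~\ref{lem:small_mobius_action} applies. One small imprecision: the metric $D_{T^1\mathbb{H}^3}$ bounds $|Z(p_1)|=\sqrt{x_1^2+y_1^2}$, not $\max(|x_1|,|y_1|)$ separately, but this is exactly what you use downstream.

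The paper's own argument is organized differently. Rather than computing the shadow disk of $\mathcal{C}(A(p),A'(p)u,\pi/2)$ in one shot, it separates the vertex perturbation from the axis perturbation via an intermediate cone $\mathcal{C}(p_1,-j,\pi/2+\varphi_0^{*}m)$ with $\varphi_0^{*}=e^{-m_0}/33$. First it shows, by an explicit tangent-hemisphere calculation (Figure~1), that the angular slack available at $p_1$ with vertical axis is at least $\varphi\ge \frac{e^{-m_0}}{32}m$; this is where the constraint $\eta\le \frac{e^{-m_0}}{32(C+1)m_0}$ enters. Second it uses the angle bound $|\angle(A'(p)u,-j)|\le 2\pi(60C+9)\eta m$ from Lemma~\ref{lem:small_mobius_action} to fit $\mathcal{C}(p_1,A'(p)u,\pi/2)$ inside $\mathcal{C}(p_1,-j,\pi/2+\varphi_0^{*}m)$; this is where $\eta\le \frac{e^{-m_0}}{66\pi(60C+9)}$ enters. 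Your direct computation collapses these two steps into a single disk-radius estimate; it is shorter and equally rigorous, while the paper's decomposition makes the provenance of each factor in $\eta''$ geometrically transparent.
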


\begin{proof}
Let $p_1=A(p)$;
write
$h_1=ht(P_1)$ and $z_1=Z(P_1)$. By the proof of Lemma \ref{lem:small_mobius_action} we have $$e^{-m}-(2C+1)\eta m\leq h_1\leq e^{-m}+(2C+1)\eta m$$ and $$|z_1|\leq 4(C+1)\eta m.$$  

Let $y=|z_1|$. Note that $z_1$ is the foot in $\mathbb{C}$ of the vertical line through $p_1$ perpendicular to $\mathbb{C}$. Let $x\in\mathbb{C}$ be the center of the euclidean hemisphere in $\mathbb{H}^3$ which passes through $p_1$ and is tangent to the unit radius euclidean hemisphere centered at $0\in\mathbb{C}$. Let $\varphi$ be the angle at $p_1$ between the radius of the hemisphere centered at $x$ and the vertical line through $p_1$ (cf. Figure 1).  

\begin{figure}
\centering
\includegraphics[scale=0.5]{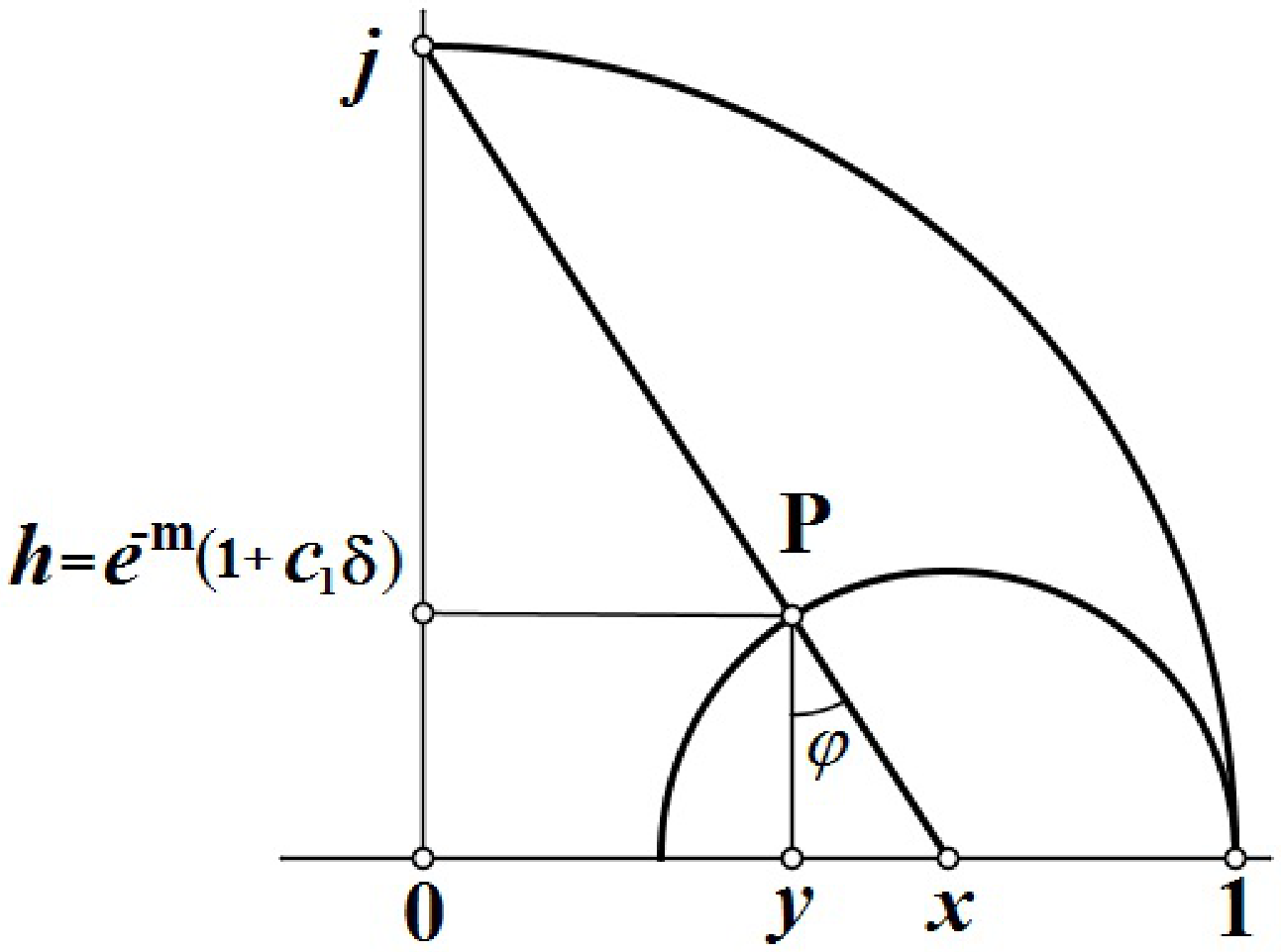}
\caption{}
\end{figure}

From Figure 1, we have
$$
(1-x)^2=(x-y)^2+h_1^2\leq x^2+h_1^2
$$
which implies
$$
x\geq \frac{1-h_1^2}{2}\geq\frac{1-h_1}{2}\geq \frac{1-e^{-m}-(2C+1)\eta m}{2}\geq \frac{1- (2C+1) \eta}{2}m.
$$
If $\eta$ satisfies
$$
\eta\leq\frac{1}{2(2C+1)},
$$
then
\begin{equation}
\label{eq:x-estimate}
x\geq\frac{1}{4}m.
\end{equation}
This gives
$$
\tan\varphi =\frac{x-y}{h_1}\geq \frac{\frac{1}{4}m-4(C+1)\eta m}{e^{-m}+(2C+1)\eta m}\geq \frac{1}{16}m
$$
when $\eta\leq\frac{1}{32m_0(C+1)}$. 

Let $\varphi_0$ be the angle at $p_1$ in Figure 1 when $m=m_0$. Then we have
\begin{equation}
\label{eq:varphi_estimate}
\varphi\geq\frac{\varphi_0}{\tan \varphi_0}\tan\varphi\geq \frac{\varphi_0}{\tan\varphi_0} \frac{1}{16}m.
\end{equation}
We need an upper bound on $\frac{\tan\varphi_0}{\varphi_0}$. Let $x_0$ and $y_0$ be the values of $x$ and $y$ when $m=m_0$. Similar to the above,
we have
$$
(1-x_0)^2=(x_0-y_0)^2+h^2\geq (x_0-y_0)^2
$$
which gives
$$
x_0\leq\frac{1+y_0}{2}\leq\frac{11}{20}
$$
because $y_0\leq C\eta m\leq\frac{1}{10}$. 
 Then
 $$
 \tan\varphi_0=\frac{x_0-y_0}{h}\leq\frac{x_0}{h}\leq\frac{\frac{11}{20}}{e^{-m_0}-(2C+1)\eta m_0}\leq\frac{11}{10}e^{m_0}
 $$
 for $\eta \leq\frac{e^{-m_0}}{2m_0(2C+1)}$. Since
 $$
 \frac{\tan\varphi_0}{\varphi_0}\leq\frac{1}{\cos\varphi_0}=\sqrt{\tan^2\varphi_0+1}\leq 2e^{m_0}
 $$
 by (\ref{eq:varphi_estimate}) we obtain
 \begin{equation}
 \label{eq:phi_main_estimate}
 \varphi\geq \frac{e^{-m_0}}{32}m.
 \end{equation}
 Let $\varphi_0^{*}=\frac{e^{-m_0}}{33}m$.
It follows that
$$
\overline{\partial_{\infty}\mathcal{C} (p_1,-j,\frac{\pi}{2}+\varphi_0^{*}m)}\subset \partial_{\infty}\mathcal{C}(j,-j,\frac{\pi}{2}).
$$
By Lemma \ref{lem:small_mobius_action}, $|\angle (w,-j)|\leq 2\pi (60C+9)\eta m$ when $0\leq\eta\leq\eta'(m_0,C)$. Thus we get
$$
\overline{\partial_{\infty}\mathcal{C} (p_1,w,\frac{\pi}{2})}\subset \partial_{\infty}\mathcal{C} (p_1,-j,\frac{\pi}{2}+\varphi_0^{*}m)
$$
when $\eta\leq\frac{e^{-m_0}}{66\pi (60C+9)}$ which implies the desired nesting of the cones. By putting the bounds on $\eta$ together, we define $\eta'' =\min\{ \frac{e^{-m_0}}{32(C+1)m_0},\frac{e^{-m_0}}{66\pi (60C+9)}\}$.
\end{proof}

The above lemma established nesting of a cone and the image of another cone under an isometry $A$ when the isometry is close enough to the identity. In the following lemma the only type of an isometry that we consider is a hyperbolic rotation of $\mathbb{H}^3$ around a geodesic $h$ intersecting the central geodesic ray of the outside cone. Unlike above, the rotation is not necessarily close to the identity since we allow the rotation angle to be arbitrary. The additional condition on the rotation is that its axis $h$ subtends a small angle with the central axis of the outside cone which implies the nesting.

\begin{lemma}
\label{lem:rotation_large_angle}
Let $m_0>0$ be fixed. Let $g$ be the positive $z$-axis in $\mathbb{H}^3$ and let $h$ be the geodesic in the $xz$-plane that intersects $g$ at the point $e^{-m}j\in\mathbb{H}^3$ subtending the angle $\theta$. Then for any $m$ with $0\leq m\leq m_0$, any $\theta$ with $0\leq\theta <\frac{e^{-m_0}}{16}$, and any $(p,u)\in T^1\mathbb{H}^3$ with
$$
D_{T^1\mathbb{H}^3}((p,u),(e^{-m}j,-j))=\delta <\frac{1}{4}
$$
we have
$$
D_{T^1\mathbb{H}^3}(R_h^{\varphi}(p,u),(e^{-m}j,-j))\leq 20\delta +40\sqrt{2} e^{m_0}\theta
$$
for any $\varphi\in\mathbb{R}$, where $R_h^{\varphi}$ is the rotation of $\mathbb{H}^3$ with the angle $\varphi$ around the axis $h$.
\end{lemma}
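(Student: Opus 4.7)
The cornerstone is that $e^{-m}j\in h$ is fixed by $R_h^{\varphi}$. Consequently, $R_h^{\varphi}$ acts on the tangent space $T_{e^{-m}j}\mathbb{H}^3\cong\mathbb{R}^3$ as the Euclidean rotation by angle $\varphi$ about the tangent direction $\vec{h}$, and since $\vec{h}$ makes angle $\theta$ with $-j$, Rodrigues' rotation formula gives that this rotation sends $-j$ to a vector within Euclidean angle $2\theta$ of $-j$, uniformly in $\varphi$. This is the mechanism that allows the proof to absorb arbitrary rotation angles $\varphi$ at the cost of only the axis-tilt $\theta$.

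For the basepoint I factor $R_h^{\varphi}=A\circ R_g^{\varphi}\circ A^{-1}$, where $A$ is the hyperbolic rotation by angle $\theta$ that fixes $e^{-m}j$ and sends $g$ to $h$. The map $R_g^{\varphi}$, being rotation about the $z$-axis, preserves both $ht(\cdot)$ and $|Z(\cdot)|$ exactly and hence the Euclidean distance to $e^{-m}j$. The Jacobians of $A$ and $A^{-1}$ at their common fixed point $e^{-m}j$ are Euclidean-orthogonal, and a first-order expansion shows they remain uniformly bounded on the Euclidean ball of radius $\delta<1/4$ about $e^{-m}j$. Combining these facts yields $|R_h^{\varphi}(p)-e^{-m}j|_{\mathrm{Eucl}}\leq C\sqrt{2}\,\delta$ for a modest constant $C$; absorbing the constant with $\delta<1/4$ gives $|ht(R_h^{\varphi}(p))-e^{-m}|,\,|Z(R_h^{\varphi}(p))|\leq 20\delta$.

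For the tangent vector I parallel transport $u$ along the short hyperbolic geodesic from $p$ to $e^{-m}j$; the comparison between hyperbolic and ambient Euclidean parallel transport over this geodesic introduces an $O(\delta)$ correction, so the transported $u$ lies within angle $\delta+O(\delta)$ of $-j$ in $T_{e^{-m}j}\mathbb{H}^3$. Applying the Euclidean rotation $(dR_h^{\varphi})_{e^{-m}j}$ about $\vec{h}$ and combining with the Rodrigues estimate above yields a vector within angle $2\theta+O(\delta)$ of $-j$. Transporting back to $T_{R_h^{\varphi}(p)}\mathbb{H}^3$ and reading the Euclidean angle there introduces a further correction in which the scaling factor $e^m\leq e^{m_0}$ between hyperbolic and Euclidean unit vectors near the low-height point $e^{-m}j$ amplifies the $\theta$-contribution. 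Under the hypotheses $\delta<1/4$ and $\theta<e^{-m_0}/16$, careful constant tracking gives $\angle(dR_h^{\varphi}(u),-j)\leq 40\sqrt{2}\,e^{m_0}\theta+20\delta$, and taking the maximum of the three estimates yields the desired bound. The main obstacle is precisely this final conversion: the intrinsic-to-extrinsic scaling at the low-height fixed point forces the $e^{m_0}$ factor, while the smallness condition on $\theta$ is exactly what is needed to keep the Taylor approximations in their linear regime.
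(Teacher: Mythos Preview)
Your approach is genuinely different from the paper's and, in outline, sound. The paper works purely computationally: it locates the boundary endpoints $a,b\in\mathbb{R}$ of $h$, shows from the intersection and angle conditions that $|b|\asymp 1/\theta$ is large and $a\asymp\theta$ is small, writes down the explicit M\"obius transformation
\[
R_h^{\varphi}(z)=\frac{\frac{a-e^{i\varphi}b}{e^{i\varphi/2}(a-b)}\,z+\frac{ab(e^{i\varphi}-1)}{e^{i\varphi/2}(a-b)}}{\frac{1-e^{i\varphi}}{e^{i\varphi/2}(a-b)}\,z+\frac{e^{i\varphi}a-b}{e^{i\varphi/2}(a-b)}},
\]
plugs this into the Poincar\'e extension formula, and estimates the resulting expressions for $Z$, $ht$, and the partial derivatives exactly as in Lemma~\ref{lem:small_mobius_action}. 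Your route instead exploits the fixed point $e^{-m}j\in h$, the conjugation $R_h^{\varphi}=A\,R_g^{\varphi}A^{-1}$ with $A$ a rotation of angle~$\theta$, and Rodrigues' formula at the tangent level. What this buys you is a transparent explanation of \emph{why} the bound is uniform in $\varphi$ and of where the factor $e^{m_0}\theta$ comes from; what the paper's approach buys is that the constants fall out mechanically from a single coefficient table.

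One point in your sketch deserves more care. You assert that the basepoint contribution is $\le 20\delta$ with no $\theta$-term, via uniform Euclidean Lipschitz bounds on $A,A^{-1}$ over the Euclidean $\delta$-ball about $e^{-m}j$. When $m_0$ is large this ball can have radius comparable to or larger than the height $e^{-m}$, so ``first-order expansion near the fixed point'' is not by itself enough; one must actually use the hypothesis $\theta<e^{-m_0}/16\le e^{-m}/16$ to see that the pole of $A$ on $\widehat{\mathbb{C}}$ sits at $\approx 2e^{-m}/\theta>32$ and that the Poincar\'e-extension denominator stays within a fixed factor of~$1$ on $|z|<\tfrac14$, $0<s<e^{-m}+\tfrac14$. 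With that observation your Lipschitz product is indeed bounded by a modest absolute constant and the rest of your plan goes through; but as written, the step hides exactly the place where the smallness of $\theta$ relative to $e^{-m_0}$ enters the basepoint estimate.
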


\begin{proof}
Let $b<0$ and $a>0$ be the endpoints of $h$. Since the angle between $g$ and $h$ is $\theta$, it follows $a=-b\tan^2\theta$. Moreover $h$ intersects $g$ in the geodesic arc $[j,e^{-m_0}]$ which gives $e^{-m_0}\leq\sqrt{-2ab}\leq 1$. Consequently
\begin{equation}
\label{eq:b}
\sqrt{2}e^{-m_0}\frac{1}{\theta}\leq |b|\leq\frac{\pi}{\sqrt{2}}\frac{1}{\theta}
\end{equation}
and 
\begin{equation}
\label{eq:a}
\frac{e^{-2m_0}}{\sqrt{2}\pi}\theta\leq a\leq\frac{e^{m_0}}{2\sqrt{2}}\theta .
\end{equation}
Note that
$$
R_h^{\varphi}(z)=\frac{\frac{a-e^{i\varphi}b}{e^{i\varphi/2}(a-b)}z+\frac{ab(e^{i\varphi}-1)}{e^{i\varphi /2}(a-b)}}{\frac{1-e^{i\varphi}}{e^{i\varphi /2}(a-b)}z+\frac{e^{i\varphi}a-b}{e^{i\varphi /2}(a-b)}}
$$
and let $R_h^{\varphi}(z+tj)$ be the extension of $R_h^{\varphi}$ to $\mathbb{H}^3$.
Then
\begin{equation*}
\begin{split}
Z(R_h^{\varphi}(z+tj))=\frac{\big{[}\frac{a-e^{i\varphi}b}{e^{i\varphi/2}(a-b)}z+\frac{ab(e^{i\varphi}-1)}{e^{i\varphi /2}(a-b)}\big{]}\overline{\big{[}\frac{1-e^{i\varphi}}{e^{i\varphi /2}(a-b)}z+\frac{e^{i\varphi}a-b}{e^{i\varphi /2}(a-b)}\big{]}}}{\big{|}\frac{1-e^{i\varphi}}{e^{i\varphi /2}(a-b)}z+\frac{e^{i\varphi}a-b}{e^{i\varphi /2}(a-b)}\big{|}^2+\big{|}\frac{1-e^{i\varphi}}{e^{i\varphi /2}(a-b)}\big{|}^2t^2}+\\
+\frac{\frac{a-e^{i\varphi}b}{e^{i\varphi/2}(a-b)}\frac{ab(e^{-i\varphi}-1)}{e^{-i\varphi /2}(a-b)}t^2}{\big{|}\frac{1-e^{i\varphi}}{e^{i\varphi /2}(a-b)}z+\frac{e^{i\varphi}a-b}{e^{i\varphi /2}(a-b)}\big{|}^2+\big{|}\frac{1-e^{i\varphi}}{e^{i\varphi /2}(a-b)}\big{|}^2t^2}
\end{split}
\end{equation*}
and
\begin{equation*}
ht(R_h^{\theta}(p,u))=\frac{t}{\big{|}\frac{1-e^{i\varphi}}{e^{i\varphi /2}(a-b)}z+\frac{e^{i\varphi}a-b}{e^{i\varphi /2}(a-b)}\big{|}^2+\big{|}\frac{1-e^{i\varphi}}{e^{i\varphi /2}(a-b)}\big{|}^2t^2}.
\end{equation*}
The bounds on $\delta$ and $\theta$ give that $a<\frac{1}{8}$, $|z|<\frac{1}{4}$ and $|b|>4$. 
Similar to the proof of Lemma \ref{lem:small_mobius_action} we obtain the desired estimates.
\end{proof}

In the following lemma we estimate the size of a hyperbolic rotation in terms of the distance of its axis to $j\in\mathbb{H}^3$ and the rotation angle.

\begin{lemma}
\label{lem:bound_on_norm}
Let $A$ be a rotation in $\mathbb{H}^3$ by an angle $\epsilon$ around geodesic $l_{a,b}$ with endpoints $a,b\in\bar{\mathbb{R}}=\partial_{\infty}\mathbb{H}^2\subset\partial_{\infty}\mathbb{H}^3$. If the geodesic with endpoints $a$ and $b$ intersects the ball of radius $m_0>0$ centered at $j\in\mathbb{H}^3$, then 
$$
\| A -Id\|\leq (1+e^{2m_0})\frac{|\epsilon |}{2}.
$$
\end{lemma}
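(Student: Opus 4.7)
The plan is to write $A$ as an explicit $PSL_2(\mathbb{C})$ matrix, bound the entries of $A-\mathrm{Id}$, and translate the geometric hypothesis into a clean inequality on $a, b$. Since $a, b\in\bar{\mathbb{R}}$, I would write $A = NR_{\epsilon}N^{-1}$ where $R_{\epsilon} = \mathrm{diag}(e^{i\epsilon/2}, e^{-i\epsilon/2})$ is the rotation around the vertical axis and
$$N = \frac{1}{\sqrt{b-a}}\begin{pmatrix} b & a \\ 1 & 1 \end{pmatrix}$$
sends $0\mapsto a,\ \infty\mapsto b$. A direct multiplication gives
$$A = \frac{1}{b-a}\begin{pmatrix} be^{i\epsilon/2}-ae^{-i\epsilon/2} & -2iab\sin(\epsilon/2) \\ 2i\sin(\epsilon/2) & be^{-i\epsilon/2}-ae^{i\epsilon/2} \end{pmatrix}.$$
Using $|e^{i\theta}-1|\le|\theta|$, $|\sin\theta|\le|\theta|$, and noting that the two diagonal entries of $A - \mathrm{Id}$ have equal modulus, one obtains the entrywise bound
$$\|A - \mathrm{Id}\| = |a_{11}-1| + \max(|a_{12}|, |a_{21}|) \le \frac{|\epsilon|}{2|b-a|}\bigl[(|a|+|b|) + 2\max(|ab|, 1)\bigr].$$

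Next I would translate the geometric hypothesis. Minimizing $\cosh d_{\mathbb{H}^3}(j,w)=1+|j-w|^2/(2\operatorname{Im}w)$ over $w$ on the Euclidean semicircle representing $l_{a,b}$ yields $\cosh d_{\mathbb{H}^3}(j,l_{a,b}) = \sqrt{(a^2+1)(b^2+1)}/|b-a|$. Via the algebraic identity $(a^2+1)(b^2+1) = (ab+1)^2 + (b-a)^2$, the hypothesis $d_{\mathbb{H}^3}(j, l_{a,b})\le m_0$ is equivalent to the clean inequality
$$|ab + 1|\le |b-a|\sinh m_0,$$
while the companion identity $(a^2+1)(b^2+1) = (ab-1)^2 + (a+b)^2$ yields $|a+b|\le |b-a|\cosh m_0$, and checking the extremal symmetric case $a = -b$ gives the lower bound $|b-a| \ge 2e^{-m_0}$.

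It remains to combine these to prove $\bigl[(|a|+|b|) + 2\max(|ab|, 1)\bigr]/|b-a| \le 1 + e^{2m_0}$. Naively summing the individually tight bounds $(|a|+|b|)/|b-a|\le \cosh m_0$ and $2\max(|ab|,1)/|b-a|\le 2e^{m_0}$ gives a constant slightly larger than $1+e^{2m_0}$ for small $m_0$, so the main obstacle is a short case analysis on the sign of $ab$ and on whether $|ab|\le 1$. In each case, the constraint $|ab+1|\le |b-a|\sinh m_0$ (together with the identity $|a|+|b|=\max(|a+b|,|b-a|)$ valid for real $a, b$) reduces the problem to an elementary single-variable optimization in $d=|b-a|$ or $p=|ab|$, and in every case the resulting supremum is bounded by $1 + e^{2m_0}$, completing the proof.
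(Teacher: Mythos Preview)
Your approach is essentially the paper's: both write $A$ explicitly as a $2\times 2$ matrix by conjugating the diagonal rotation $R_\epsilon$ by a M\"obius map taking $\{0,\infty\}$ to $\{a,b\}$, and then bound the entries of $A-\mathrm{Id}$ using constraints on $a,b$ coming from the hypothesis $d(j,l_{a,b})\le m_0$. The only real differences are in the geometric bookkeeping and the final accounting: the paper quotes a distance formula from Beardon to control $|ab|/|a-b|$ and bounds the four matrix entries separately, whereas you compute $\cosh d(j,l_{a,b})=\sqrt{(a^2+1)(b^2+1)}/|b-a|$ directly, extract the tidier constraints $|ab+1|\le|b-a|\sinh m_0$ and $|a+b|\le|b-a|\cosh m_0$, and close with a case analysis on $\mathrm{sgn}(ab)$ and $|ab|\lessgtr 1$ that pins down the exact constant $(1+e^{2m_0})/2$. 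That case analysis does go through (in the subcase $ab<0$, $|ab|>1$ you will also want the AM--GM bound $|ab|\le |b-a|^2/4$), and it is worth writing out in full, since the paper's four individual entry bounds, if simply summed per row, overshoot the stated constant slightly.
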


\begin{proof}
We prove the lemma when both $a$ and $b$ are finite points. When $a$ or $b$ is $\infty$, the proof is left to the reader.

Note that 
$$
A(z)=\frac{\frac{a-be^{i\epsilon}}{(a-b)e^{i\frac{\epsilon}{2}}}z+\frac{ab(e^{i\epsilon}-1)}{(a-b)e^{i\frac{\epsilon}{2}}}}{\frac{1-e^{i\epsilon}}{(a-b)e^{i\frac{\epsilon}{2}}}z+\frac{ae^{i\epsilon}-b}{(a-b)e^{i\frac{\epsilon}{2}}}}.
$$

Assume that $|a|\leq |b|$.
Since $l_{a,b}$ intersects the ball of radius $m_0$ centered at $j\in\mathbb{H}^3$, it follows that
$|a|\leq e^{m_0}$ and $|a-b|\geq 2e^{m_0}$. Then
\begin{equation*}
\Big{|}\frac{a-be^{i\epsilon}}{(a-b)e^{i\frac{\epsilon}{2}}}-1\Big{|}=\frac{|1-e^{i\frac{\epsilon}{2}}|\cdot |a+be^{i\frac{\epsilon}{2}}|}{|a-b|}\leq (1+\frac{2|a|}{|a-b|})\frac{\epsilon}{2}\leq (1+e^{2m_0})\frac{\epsilon}{2}.
\end{equation*}

Further, we claim that
\begin{equation*}
\Big{|}\frac{ab(e^{i\epsilon}-1)}{a-b}\Big{|}\leq\frac{|ab|}{|a-b|}\epsilon\leq\frac{e^{2m_0}}{4}\epsilon.
\end{equation*}
To see this, note that $j\in\mathbb{H}^3$ is on the distance at most $m_0$ from the geodesic $l_{a,b}$. Then \cite[formula (7.20.4)]{Bear} gives
$$
\sinh m_0\geq\frac{\cosh^2m_0+ab}{|(b-a)|cosh m_0}
$$
which implies the above. 
In a similar fashion, we obtain 
$$
\Big{|}\frac{ae^{i\epsilon}-b}{(a-b)e^{i\frac{\epsilon}{2}}}-1\Big{|}\leq (1+e^{2m_0})\frac{\epsilon}{2}
$$
and 
$$
\Big{|}\frac{1-e^{i\epsilon}}{(a-b)e^{i\frac{\epsilon}{2}}}\Big{|}\leq e^{m_0}\epsilon .
$$ 
\end{proof}

The following lemma is well-known \cite{Bon2} and we estimate the constant involved.

\begin{lemma}
\label{lem:const_in_norm}
Let $g_1$ and $g_2$ be geodesics in $\mathbb{H}^2\subset\mathbb{H}^3$ that have a common endpoint and that intersect the ball $B_{m_0}(i)$ of radius $m_0>0$ centered at $i\in\mathbb{H}^2$. Let $s$ be a geodesic arc that connects $g_1$ and $g_2$ inside $B_{m_0}(i)$. Then
$$
\|R_{g_1}^{\epsilon}\circ R_{g_2}^{-\epsilon}-Id\|\leq 2e^{m_0}(1+e^{m_0})|s|\cdot |\epsilon |,
$$
where $|s|$ is the hyperbolic length of the geodesic arc $s$.
\end{lemma}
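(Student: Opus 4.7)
The isometry $R_{g_1}^{\epsilon}\circ R_{g_2}^{-\epsilon}$ fixes the common endpoint $a$ of $g_1$ and $g_2$, so after conjugating to place $a$ at $\infty$ it becomes a pure parabolic translation whose matrix is explicit. My plan is to carry out this conjugation using an element $T$ of $SU(2)\subset PSL_2(\mathbb{C})$, the stabilizer of $j\in\mathbb{H}^3$, so that the basepoint $i=j$ and the ball $B_{m_0}(i)$ are preserved and no price is paid in the conjugation norm. Transitivity of $SU(2)$ on $\partial_\infty\mathbb{H}^3$ furnishes some $T\in SU(2)$ with $T(a)=\infty$, and the parametrization $T=\begin{pmatrix}\alpha & \beta\\ -\bar\beta & \bar\alpha\end{pmatrix}$ with $|\alpha|^2+|\beta|^2=1$ yields directly from the definition of $\|\cdot\|$ the uniform bound $\|T\|,\|T^{-1}\|\leq \sqrt{2}$.

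In the conjugated coordinates each $\tilde g_j:=T(g_j)$ is the vertical geodesic through $\tilde b_j:=T(b_j)\in\mathbb{C}$. I would specialize the matrix of $R_g^{\eta}$ from Lemma~\ref{lem:bound_on_norm} to $a=\infty$, obtaining the upper-triangular rotation matrix $\begin{pmatrix} e^{i\eta/2} & \tilde b(e^{-i\eta/2}-e^{i\eta/2})\\ 0 & e^{-i\eta/2}\end{pmatrix}$, and multiply the two such matrices to get
\[
R_{\tilde g_1}^{\epsilon}\,R_{\tilde g_2}^{-\epsilon}=\begin{pmatrix}1 & (\tilde b_2-\tilde b_1)(e^{i\epsilon}-1)\\ 0 & 1\end{pmatrix}.
\]
The deviation from the identity in the paper's norm is then exactly $|\tilde b_2-\tilde b_1|\cdot|e^{i\epsilon}-1|\leq |\tilde b_2-\tilde b_1|\cdot|\epsilon|$.

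To convert $|\tilde b_2-\tilde b_1|$ into the length $|s|$, I would write the endpoints of the image arc $T(s)\subset B_{m_0}(j)$ as $\tilde x_j=\tilde b_j+\tilde t_j j$. The standard upper half-space distance formula yields $\cosh|s|\geq 1+|\tilde b_1-\tilde b_2|^2/(2\tilde t_1\tilde t_2)$, and the bounds $\tilde t_j\leq e^{m_0}$ and $|s|\leq 2m_0$ enforced by $B_{m_0}(j)$, combined with $\sinh x\leq xe^{x}$, give $|\tilde b_1-\tilde b_2|\leq e^{2m_0}|s|$. Conjugating back and using submultiplicativity of $\|\cdot\|$ together with $\|T\|\|T^{-1}\|\leq 2$ then yields $\|R_{g_1}^{\epsilon}\circ R_{g_2}^{-\epsilon}-Id\|\leq 2e^{2m_0}|s|\cdot|\epsilon|$, which is dominated by the stated bound $2e^{m_0}(1+e^{m_0})|s|\cdot|\epsilon|$. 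The main delicate point, beyond routine matrix algebra, is arranging the conjugation through $SU(2)$ so that the norm cost is bounded; this is what makes the reduction to the vertical-axis case essentially free, whereas an arbitrary conjugating Möbius transformation sending $a$ to $\infty$ would produce a constant that depends badly on the position of $a$.
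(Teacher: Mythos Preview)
Your argument is correct and follows essentially the same strategy as the paper: conjugate the common ideal endpoint to $\infty$ by an element of the unitary stabilizer of the basepoint (the paper uses $SO(2)\subset SU(2)$, since the endpoint lies in $\mathbb{R}\cup\{\infty\}$), compute the resulting parabolic explicitly, and bound the Euclidean separation of the finite endpoints by the hyperbolic length $|s|$. The only cosmetic difference is in that last step, where the paper uses a horocyclic-arc formula and the perpendicular from $P_2$ to $g_1'$, while you invoke the upper half-space distance formula directly; your route is slightly cleaner and in fact yields the marginally sharper constant $2e^{2m_0}\le 2e^{m_0}(1+e^{m_0})$.
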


\begin{proof}
Let $A(z)=\frac{\cos\theta z+\sin\theta}{-\sin\theta z+\cos\theta}$ be a rotation of $\mathbb{H}^2$ around $i$. Note that $\| A\|\leq \sqrt{2}$. For the given embedding of $\mathbb{H}^2$ into $\mathbb{H}^3$, we have that $i\in\mathbb{H}^2$ is identified with $j\in\mathbb{H}^3$ and that the mapping $A$ acts as a rotation in $\mathbb{H}^3$ around the geodesic with endpoints $i,-i\in\partial_{\infty}\mathbb{H}^3=\bar{\mathbb{C}}$. The geodesic of the rotation passes through $j\in\mathbb{H}^3$ and $\mathbb{H}^2$ is orthogonal to this geodesic(thus setwise preserved by $A$). 

Let $t\in\mathbb{R}\cup \{\infty\}$ be the common endpoint of the geodesics $g_1$ and $g_2$. We choose the rotation $A$ such that $A(t)=\infty$. Then $A\circ R_{g_j}^{\epsilon}\circ A^{-1}=R_{g_j'}^{\epsilon}$, where $g_j'=A(g_j)$ for $j=1,2$. 
Note that
$$
\|R_{g_1}^{\epsilon}\circ R_{g_2}^{-\epsilon}-Id\|\leq\| A^{-1}\|\cdot\| R_{g_1'}^{\epsilon}\circ R_{g_2'}^{-\epsilon}-Id\|\cdot \| A\|=2\| R_{g_1'}^{\epsilon}\circ R_{g_2'}^{-\epsilon}-Id\|.
$$

Let $a$ and $b$ be the endpoints of $g_1'$ and $g_2'$, respectively.
 A short computation gives
$$
\| R_{g_1'}^{\epsilon}\circ R_{g_2'}^{-\epsilon}-Id\|=2|a-b|\cdot |\sin\frac{\epsilon}{2} |\leq |a-b| \cdot |\epsilon |.
$$
Let $P_1\in g_1'$ and $P_2\in g_2'$ be the endpoints of $s$. Without loss of generality, assume that $ht(P_1)\geq ht(P_2)$. Let $l'$ be the arc issued from $P_2$ that is orthogonal to $g_1'$. 
Let $x=ht(P_2)$. A direct computations gives
\begin{equation}
|l'|=\log \Big{[}\frac{|a-b|}{x}+\sqrt{1+\Big{(}\frac{|a-b|}{x}\Big{)}^2}\Big{]}
\end{equation}

Note that $h=\frac{|a-b|}{x}$ is the length of the horocyclic arc centered at $\infty$ between $g_1'$ and $g_2'$ at the height $x$. Let $h_0$ be the maximum of the lengths of horocyclic arcs with the center at $\infty$ and inside the ball of radius $m_0$ centered at $i\in\mathbb{H}^2$. Then we obtain
$$
|l'|\geq\log (1+h)\geq\frac{1}{1+h_0}h
$$
which implies 
$$
|a-b|\leq x(1+h_0)|l'|\leq e^{m_0}(1+e^{m_0})|l'|
$$
and the lemma follows. 
\end{proof}

\section{Injectivity on the boundary}

Recall that the embedding of $\mathbb{H}^2$ in $\mathbb{H}^3$ given by
the mapping $z=x+yi\mapsto x+yj$, for $y>0$ and $x\in\mathbb{R}$. In this section, we
prove that the bending map of a pleated surface realizing a
transverse cocycle $\beta\in\mathcal{H}(\lambda ,\mathbb{R}/2\pi
\mathbb{Z})$ induces an injective map from
$\partial_{\infty}\mathbb{H}^2$ into $\partial_{\infty}\mathbb{H}^3$
under geometric conditions on the bending transverse cocycle
$\beta$ given in Theorem \ref{thm:main}.

\vskip .2 cm

\subsection{Outline of the proof}

For any $x,y\in\partial_{\infty}\mathbb{H}^2$ with $x\neq y$ we need to prove that $\tilde{f}(x)\neq \tilde{f}(y)$. Let $g$ be a hyperbolic geodesic in $\mathbb{H}^2\subset\mathbb{H}^3$ whose endpoints are $x$ and $y$. If $g$ is a leaf of $\tilde{\lambda}$ then $\tilde{f}(g)\subset \mathbb{H}^3$ is a geodesic. Thus $\tilde{f}(x)\neq\tilde{f}(y)$ in the case when $g$ is a leaf of $\tilde{\lambda}$. 

The main work is in proving $\tilde{f}(x)\neq\tilde{f}(y)$ when $g$ is not a leaf of $\tilde{\lambda}$. Let $p$ be a point on $g$ contained in a plaque of $\tilde{\lambda}$. 
The geodesic $g$ is divided by $p$ into two geodesic rays $g_1$ and $g_2$. We form two hyperbolic cones $\mathcal{C}(p,g_1,\pi /2)$ and $\mathcal{C}(p,g_2,\pi /2)$ whose shadows on $\partial_{\infty}\mathbb{H}^3$ are disjoint and contain $x$ and $y$, respectively. The idea is to prove that $\tilde{f}(g_i)$ stays in the cone $\mathcal{C}(p,g_i,\pi /2)$, for $i=1,2$. It is enough to restrict ones attention to $\tilde{f}(g_1)$ and the proof for the other ray is analogous. 

We start by considering all the intersection points of $g_1$ with the boundary sides of the long rectangles of $\tilde{\tau}$. We form a division of $g_1$ into arcs $\{ (a_n,b_n)\}_n$ such that $a_n<b_n\leq a_{n+1}<b_{n+1}$ for all $a_n$, where $a_n,b_n$ are (some of the) points of the intersection of $g_1$ with the boundary sides of long rectangles and the arc $(b_n,a_{n+1})$, if $b_n<a_{n+1}$, is outside of the geometric train track $\tilde{\tau}$. Moreover, each $(a_n,b_n)$ is chosen such that 
either it connects two long sides of a long rectangle; or it connects a long side of one long rectangle with a long side of its immediate neighbor rectangle; or it connects one long side of a long rectangle with another long side of a rectangle while passing through another rectangle; or it connects a short side of one rectangle to the other short side of the same rectangle; or it connects a short side of one rectangle with a long side of the adjacent rectangle. In short, any arc $(a_n,b_n)$ intersects at most three long rectangles; and it either intersects the family of all geodesics crossing a long rectangle $E$ while connecting the long sides of $E$, or it intersects the family of arcs intersecting a subarc of a short side of rectangle $E$ while connecting the short sides of $E$.

We simplify the considerations by assuming that $(a_n,b_n)$ either connects two long sides or it connects two short sides of a single long rectangle $E$. The arguments for these two cases also prove the theorem in other case with slightly smaller $\epsilon$ and $\delta$. 

Assume first that $(a_n,b_n)$ connects two long sides of a long rectangle $E$. Let $P$ and $Q$ be plaques of $\tilde{\lambda}$ which contain $a_n$ and $b_n$. Recall that the realization $\varphi_{P,Q}$ is given by
$$ 
\varphi_{P,Q}=\psi_{P,Q}R_Q
$$
where
$$
\psi_{P,Q}=\lim_{\mathcal{P}\to\mathcal{P}_{P,Q}} B_1B_2\cdots B_n
$$
for $\mathcal{P} =\{ P_1,P_2,\ldots ,P_n\}$. 

Recall that $$B_{P_i}=R_{g_i^P}^{\beta (P,P_i)}R_{g_i^Q}^{-\beta
(P,P_i)}
$$
where $P_i\in\mathcal{P}$, $\beta (P,P_i)$ the $\beta$-mass of a geodesic arc connecting $P$ and $P_i$, $g_i^P$ the geodesic on the boundary of $P_i$ facing $P$, and $g_i^Q$ the geodesic on the boundary of $P_i$ facing $Q$. Moreover $$R_Q=R_{g_Q^P}^{\beta (P,Q)}.$$ 

 Using the fact that $\|\beta\|_{max}$ is small, we get that $R_Q$ is close to the identity by Lemma 5.4. Moreover, since $\|\beta\|_{var_{\delta}}$ is small when $\delta$ is fixed, a repeated use of Lemma 5.5 gives that $B_1B_2\cdots B_n$ is close to the identity independently of $n$ which implies that $\varphi_{P,Q}$ is close to the identity for $\epsilon >0$ and $\delta >0$ small enough. Then Lemma
 5.2 guarantees that the image (under normalized $\tilde{f}$ which is the identity on $P$) of the cone at $b_n$ is contained in the cone at $a_n$.
 
 Assume next that the arc $(a_n,b_n)$ connects two short sides of a long rectangle $E$. In this case $|\beta ([a_n,b_n])|$ might not be small. The above estimates on $\psi_{P,Q}$ still hold by the same method using the fact that  $\|\beta\|_{var_{\delta}}$ is small when $\delta$ is fixed and small enough. However, the rotation $R_Q$ might not have small angle which makes $R_Q$ bounded away from the identity. This is a new phenomenon which does not appear in the case when the bendings are by measured (i.e. countably additive) transverse cocycles. Since the arc $(a_n,b_n)$ connects two short sides of a long rectangle, it follows that the angles of the intersections of the arc $(a_n,b_n)$ with the leaves of $\tilde{\lambda}$ are small. Then Lemma 5.3 implies that the cone at $b_n$ is mapped by $R_Q$ to a nearby cone. Further, the nearby cone is mapped by $\psi_{P,Q}$ to a cone contained in the cone at $a_n$ by Lemma 5.2. This finishes the proof of the nesting of cones in both cases and the proof of the theorem.

\subsection{The first step in proof of Theorem \ref{thm:main}}
Let $\{ k_1,\ldots ,k_{n}\}$ be a set of geometric arcs for the geodesic lamination $\lambda$ satisfying all the properties given in \S 2. Let $\tau$ be the corresponding geometric train track. Let $\tilde{\lambda}$ and $\tilde{\tau}$ be lifts to $\mathbb{H}^2$ of the geodesic lamination $\lambda$ and the geometric train track $\tau$. For simplicity, we denote by $k_j$ any lift of an arc $k_j$ and by $E$ any lift of an edge $E$.

Let $g$ be an arbitrary geodesic in $\mathbb{H}^2$. Our goal is to
show that the endpoints of $g$ in $\partial_{\infty}\mathbb{H}^2$
are mapped to distinct points in $\partial_{\infty}\mathbb{H}^3$
under the bending map $\tilde{f}$ corresponding to the transverse
cocycle $\beta$. If $g$ is contained in $\tilde{\tau}$ then it
coincides with a geodesic of $\tilde{\lambda}$. Since the bending
map $\tilde{f}$ sends a geodesic in $\tilde{\lambda}$ to a geodesic
in $\partial_{\infty}\mathbb{H}^3$, it follows that the endpoints of
$g$ are mapped to distinct points.

The main case to consider is when $g$ transversely intersects 
$\tilde{\lambda}$. Then there exists $p\in g\cap
(\mathbb{H}^2-\tilde{\tau})$ because if $g$ is completely contained in $\tilde{\tau}$ then it is a geodesic of $\tilde{\lambda}$. The point $p$ divides the geodesic $g$
into two geodesic rays $g_1$ and $g_2$ with endpoints $x$ and $y$, respectively. We consider the geodesic ray
$g_1$ and similar conclusions hold for $g_2$. 

First divide the geodesic ray $g_1$ into subarcs using the points of
intersections of $g_1$ with the boundary sides of the edges of
$\tilde{\tau}$ (i.e. boundary sides of long rectangles) as
follows. The point $p$ is the initial point of $g_1$. The first
point $a_1$ of the intersection of $g_1$ with $\tilde{\tau}$ is at
a long side of an edge $E$ of $\tilde{\tau}$. We consider the
next point $p_1$ of the intersection of $g_1$ with a boundary side
of an edge $E$ of $\tilde{\tau}$. If $p_1$ is on the long side
of $E$ then we set $b_1=p_1$ and $[a_1,b_1]$ is the first subarc in
the division of $g_1$. If $p_1$ is on the short side of the edge
$E$ then we consider the next point $q_1$ of the intersection of
$g_1$ with boundary sides of the edges of $\tilde{\tau}$. If $q_1$
is on a long side of an edge then we set $b_1=q_1$. If $q_1$ is
on a short side and the next point of the intersection of $g_1$
with the boundary sides of $\tilde{\tau}$ is also on a short
side, then we set $q_1=b_1$. If $q_1$ is on a short side and the
next point of the intersection $r_1$ is on long side, then we
set $b_1=r_1$. Note that the arc $[a_1,b_1]$ intersects interiors of
at most three edges of $\tilde{\tau}$.

Assume that we have defined first $n$ arcs $\{ [a_1,b_1],\ldots
,[a_n,b_n]\}$ and we proceed to define $(n+1)$-st arc. If $b_n$ is
on the boundary of two edges of $\tilde{\tau}$, then we set
$a_{n+1}=b_n$; otherwise we let $a_{n+1}$ to be the first
intersection point of $g_1$ with the boundary sides of the edges of
$\tilde{\tau}$ that comes after $b_n$. Then $b_{n+1}$ is chosen in
a same fashion as $b_1$ above. We continue this process
indefinitely. Thus we obtain a family of arcs $\{
[a_n,b_n]\}_{n\in\mathbb{N}}$. If $a_n$ does not belong to a plaque
of $\tilde{\lambda}$ then we replace it with a nearby point on $g_1$
which belongs to a plaque and call the new point $a_n$ again. Do the
same for $b_n$. This situation occurs when $a_n$ or $b_n$ belong to the intersections of a short side of an edge of $\tilde{\tau}$ and the geodesic lamination $\tilde{\lambda}$. The
complement in $g_1$ of the union of arcs $[a_n,b_n]$ does not
intersect $\tilde{\lambda}$.

We consider a sequence of nested cones
$\mathcal{C}(a_n,g_1,\frac{\pi}{2}
)\supset\mathcal{C}(b_n,g_1,\frac{\pi}{2} )$ for $n\in\mathbb{N}$.
When the bending map $\tilde{f}$ is normalized to be the identity at the plaque containing $a_n$,
it is enough to prove that
$$
\overline{\partial_{\infty}\tilde{f}(\mathcal{C}(b_n,g_1,\frac{\pi}{2}))}\subset
\partial_{\infty}\mathcal{C}(a_n,g_1,\frac{\pi}{2}).
$$
Because this property is geometric, it is independent under the
post-compositions by the isometries of the bending map and it can be repeated along the
sequence of arcs $\{ [a_n,b_n]\}$. Thus we obtain that the sequence
of the images under the bending map of the shadows of the cones is
nested and in particular, the image under $\tilde{f}$ of the
endpoint of $g_1$ is contained in
$\partial_{\infty}\mathcal{C}(p,g_1,\frac{\pi}{2})$. Similarly, the
image under $\tilde{f}$ of the endpoint of $g_2$ is contained in
$\partial_{\infty}\mathcal{C}(p,g_2,\frac{\pi}{2})$. Since
$\partial_{\infty}\mathcal{C}(p,g_1,\frac{\pi}{2})\cap
\partial_{\infty}\mathcal{C}(p,g_2,\frac{\pi}{2})=\emptyset$, the bending
map $\tilde{f}$ sends the endpoints of $g$ into distinct points of
$\partial_{\infty}\mathbb{H}^3$.

To finish the proof, it remains to show that
\begin{equation}
\label{eqn:nesting_on_[a_n,b_n]}
\overline{\partial_{\infty}\tilde{f}(\mathcal{C}(b_n,g_1,\frac{\pi}{2}
))}\subset
\partial_{\infty}\mathcal{C}(a_n,g_1,\frac{\pi}{2} )
\end{equation}
where the bending map $\tilde{f}:\mathbb{H}^2\to\mathbb{H}^3$ is
normalized to be the identity at the point $a_n$. The rest of the
proof is divided into cases depending on the combinatorics of the
intersection of $[a_n,b_n]$ with the edges of $\tilde{\tau}$.

\subsection{Case I: $[a_n,b_n]$ connects two long sides of an edge $E$}

Recall that $l^{*}$ is the maximum
 of the diameters of the edges of the train track $\tilde{\tau}$.
Then each $[a_n,b_n]$ has length less than or equal to $3l^{*}$ since it
intersects at most three edges of $\tilde{\tau}$.

 Assume that $[a_n,b_n]$ intersects interior of a
single edge $E$ of $\tilde{\tau}$ and that it connects the two long
sides of $E$. Let $P$ and $Q$ be the plaques that contain $a_n$ and $b_n$,
respectively. By pre-composing the bending map with an isometry of $\mathbb{H}^2$, we can assume that $a_n=i\in\mathbb{H}^2$. 
Note that $\mathbb{H}^2$ is identified with $\{ (z,t): Im(z)=0,t>0\}\subset\mathbb{H}^3$ and under this identification $i\in\mathbb{H}^2$ corresponds to $j\in\mathbb{H}^3$.

Let $s(E)$ be a short side of $E$ (which means that it is a lift to $\mathbb{H}^2$ of an arc in $\{ k_1,\ldots ,k_n\}$, cf. \S 2). Then $|s(E)|\leq w^{*}\leq\frac{1}{20}$. Note that $s(E)$ is contained in ball of radius $l^{*}$ centered at $a_n=i\in\mathbb{H}^2$ because the diameter of $E$ is at most $l^{*}$. 

Recall that
$$
\psi_{P,Q}=\lim_{\mathcal{P}_l\to\mathcal{P}_{P,Q}}\psi_l
$$
where $\mathcal{P}=\{ P_1,P_2,\ldots ,P_n\}$ is a set of plaques between $P$ and $Q$ in the given order, 
$$
\psi_l=B_1B_2\ldots B_n$$
and
$$
B_i=R^{\beta (P,P_i)}_{g_{P_i}^P}R^{-\beta (P,P_i)}_{g_{P_i}^Q}
$$
where $g_{P_i}^P$ is the geodesic on the boundary of $P_i$ which separates $P_i$ from $P$; similar for $g_{P_i}^Q$; and $R_g^a$ is a hyperbolic rotation with the axis $g$ and the rotation angle $a$. 

Since points of $s(E)$ are on the distance at most $l^{*}$ from $i\in\mathbb{H}^2$, it follows by Lemma \ref{lem:const_in_norm} that
\begin{equation}
\label{eq:norm_estimate}
\| B_i-Id\|\leq 2e^{l^{*}}(1+e^{l^{*}})|\beta (P,P_i)|\cdot |d_i|
\end{equation}
where $|d_i|$ is the length of the gap $d_i=E\cap P_i$ and $\beta (P,P_i)$ is taken to be in the interval $(-\pi ,\pi ]$. 

We estimate the norm of $\prod_{i=1}^l B_i$ for arbitrary $l$. By (\ref{eq:norm_estimate}), we have that
$$
\| \prod_{i=1}^l B_i\|\leq \prod_{i=1}^l \|B_i\|
\leq \prod_{d} (1+2\pi e^{l^{*}}(1+e^{l^{*}})  |d|)
$$
where the last product is over all gaps $d$ of $s(E)$ with respect to $\tilde{\lambda}$.
Then 
\begin{equation*}
\begin{split}
\log \prod_{i=1}^l \|B_i\|
\leq\sum_{d} \log (1+ 2\pi e^{l^{*}}(1+e^{l^{*}}) |d|)\leq \\ 
\leq\sum_d 2\pi e^{l^{*}}(1+e^{l^{*}}) |d|\leq 2\pi e^{l^{*}}(1+e^{l^{*}}) |s(E)|.
\end{split}
\end{equation*}
Since $|s(E)|\leq\frac{1}{20}$, we have
$$
\| \prod_{i=1}^l B_i\|\leq e^{2\pi e^{l^{*}}(1+e^{l^{*}}) |s(E)|}\leq C(l^{*})
$$
where 
\begin{equation}
\label{eq:const_C}
C(l^{*})=e^{e^{l^{*}}}.
\end{equation}

This implies
$$
\|\psi_{P,Q}-Id\|\leq C(l^{*})\sum_d \| B_d-Id\|,
$$
where  the sum is over all gaps $d$ of $s(E)$ (i.e. components of $s(E)\setminus\tilde{\lambda}$) except the two components which contain the endpoints of $s(E)$; $P_d$ is the plaque which contains $d$; and
$B_d=R_{g_{P_d}^P}^{\beta (P,P_d)}\circ R_{g_{P_d}^Q}^{-\beta (P,P_d)}.$ 

We divide $\sum_d \| B_d-Id\|$ into two sums as follows. The first sum $\sum'$ is over finitely many gaps $\{ d_i: i=1,\ldots ,{n_{s(E)}}\}$ of 
$s(E)$ used in the definition of $\|\beta\|_{var_{\delta},s(E)}$ and the second sum $\sum''$ is over the remaining (infinitely many) gaps of $s(E)$.

Each term of $\sum'$ corresponding to a gap $d$ of $s(E)$ is bounded from above by $2e^{l^{*}}(1+e^{l^{*}}) |d|\cdot\|\beta\|_{var_{\delta}}$ by (\ref{eq:norm_estimate}).
Thus $$\sum'\leq 2e^{l^{*}}(1+e^{l^{*}})|s(E)|\cdot \|\beta\|_{var_{\delta}}.$$ 

The term of the second sum $\sum''$ which corresponds to a gap $d$ of $s(E)$ is bounded by $2\pi e^{l^{*}}(1+e^{l^{*}})|d|$ by (\ref{eq:norm_estimate}). Since the sum of the lengths of all $d$ in $\sum''$ is less than $\delta |s(E)|$, we obtain $$\sum''\leq 2\pi e^{l^{*}}(1+e^{l^{*}})\delta |s(E)|.$$ 

Taking the two estimates together, we have
$$
\|\psi_{P,Q}-Id\|\leq C'(l^{*})(\|\beta\|_{var_{\delta}} +\delta ) |s(E)| 
$$
where
$$
C'(l^{*})=2\pi e^{l^{*}}(1+e^{l^{*}})e^{e^{2l^{*}}}.
$$

By Lemma \ref{lem:bound_on_norm} and by $\beta (P,Q)=\beta (s(E))$, we get
 that $$\| R_Q-Id\| \leq (1+e^{2l^{*}}) |\beta (s(E))|/2\leq \frac{1+e^{2l^{*}}}{2}\|\beta\|_{max}$$
 where $R_Q=R_{g_Q^P}^{\beta (P,Q)}$ as in the definition of $\varphi_{P,Q}$. 
 
 Thus $$\|   R_Q\|\leq 1+\frac{1+e^{2l^{*}}}{2}\|\beta\|_{max}\leq \frac{3+e^{2l^{*}}}{2}$$ if we restrict to $\beta$ with $\|\beta\|_{max}\leq 1$. 
 
 Consequently, we obtain
\begin{equation}
\label{eq:norm-id-bound}\begin{split}
\|\varphi_{P,Q}-Id\|\leq C'(l^{*}) \frac{3+e^{2l^{*}}}{2}(\|\beta\|_{var_{\delta}} +\delta )|s(E)|  +\frac{1+e^{2l^{*}}}{2}\|\beta\|_{max}\leq\\
\leq C''(l^{*})(\|\beta\|_{var_{\delta}} +\delta )|s(E)|  +\frac{1+e^{2l^{*}}}{2}\|\beta\|_{max}
\end{split}
\end{equation}
where 
\begin{equation*}
C''(l^{*})=C'(l^{*}) \frac{3+e^{2l^{*}}}{2}=\pi (1+e^{l^{*}})(3+e^{2l^{*}})e^{l^{*}+e^{2l^{*}}}.
\end{equation*}

The inequality (\ref{eq:norm-id-bound}) holds for both short sides $s_1(E)$ and $s_2(E)$ 
of the edge $E$. Without loss of generality we assume that $|s_1(E)|=\min\{ |s_1(E)|,|s_2(E)|\}$. Lemma \ref{lem:size_rectangles} implies that $|[a_n,b_n]|\geq \frac{1}{20e^{l^{*}}}|s_1(E)|$, where $|[a_n,b_n]|$ is the length of the hyperbolic arc $[a_n,b_n]$.
Let $\eta''(l^{*},0)$ be the constant from Lemma \ref{lem:nested_cones_for_bounded_distances} for the given $l^{*}$ and $C=0$.
Lemma \ref{lem:nested_cones_for_bounded_distances} implies the desired nesting of the cones if
 the right side of (\ref{eq:norm-id-bound}) is less than $\eta''(l^{*},0)|[a_n,b_n]|$. To achieve this, it is enough to set
$$\epsilon =\frac{1}{2}\min\{ \frac{\eta''(l^{*},0)}{60e^{l^{*}}C''(l^{*})} ,\frac{2\eta''(l^{*},0)}{3(1+e^{2l^{*}})}\} =\frac{\eta''(l^{*},0)}{120e^{l^{*}}C''(l^{*})}$$ and $$\delta =\frac{\eta''(l^{*},0)}{120e^{l^{*}}C''(l^{*})}$$ for $\eta''(l^{*},0)$ given by Lemma \ref{lem:nested_cones_for_bounded_distances}.
The nesting
of the cones at $a_n$ and $b_n$ is guaranteed by Lemma
\ref{lem:nested_cones_for_bounded_distances} because $|[a_n,b_n]|\leq l^{*}$.

\subsection{Case 2: $[a_n,b_n]$ connects long side of an edge to a long side of an adjacent edge.}
Assume that $[a_n,b_n]$ enters an edge $E_1$ through
a long side, then enters an edge $E_2$ through a short side
in common with $E_1$ and exists $E_2$ through a long side of
$E_2$. 

Since the train track $\tau$ is bivalent
we have that the set of geodesics of $\tilde{\lambda}$ which intersect the arc $[a_n,b_n]$ is either the set of geodesics which traverses the edge $E_1$ or the set of geodesics which traverses the edge $E_2$. For definiteness, assume that we are in the former case. 

Let $s(E_1)$ be the short side of $E_1$ that contains one short side of $E_2$ and let $c_n=[a_n,b_n]\cap s(E_1)$.
Normalize such that $a_n=i\in\mathbb{H}^2$. Let $s(E_1)^1$ and $s(E_1)^2$ be the two arcs obtained by dividing $s(E_1)$ with the point $c_n$ such that the arcs $[a_n,c_n]$ and $s(E_1)^1$ have endpoints on the same long side $l_1$ of $E_1$, and that the arcs $[c_n,b_n]$ and $s(E_1)^2$ have endpoints on the same long side $l_2$ of $E_2$. Let $h_1$ and $h_2$ be two arcs from $c_n$ orthogonal to $l_1$ and $l_2$, respectively. 

It is immediate that $|[a_n,c_n]|\geq |h_1|$ and $|[c_n,b_n]|\geq |h_2|$. The hyperbolic sine rule and the fact that $|s(E_1)|\leq\frac{1}{20}$ give
$|s(E_1)^1|\leq \cosh \frac{1}{20}|h_1|\leq \cosh \frac{1}{20}|[a_n,c_n]|$ and $|s(E_1)^2|\leq \cosh \frac{1}{20}|h_2|\leq \cosh \frac{1}{20}|[c_n,b_n]|$. We obtain
$$
|s(E_1)|\leq \cosh \frac{1}{20}|[a_n,b_n]|.
$$

Similar to Case 1, we get
$$
\|\varphi_{P,Q}-Id\|\leq C''(2l^{*})(\|\beta\|_{var_{\delta}} +\delta )|s(E_1)|  +\frac{1+e^{4l^{*}}}{2}\|\beta\|_{max}
$$
where we use $2l^{*}$ instead of $l^{*}$ because the diameter of $E_1\cup E_2$ is $2l^{*}$.

 The nesting of the cones at $a_n$ and $b_n$ follows as in Case 1 with the constants $$\epsilon =\delta=\frac{\eta''(2l^{*},0)}{120e^{2l^{*}}C''(2l^{*})}$$ for $\eta''(2l^{*},0)$ given by Lemma \ref{lem:nested_cones_for_bounded_distances}. The later case is dealt with in the same fashion with the same constants.

\subsection{Case 3: $[a_n,b_n]$ connects two short sides of an edge.}
Assume that $[a_n,b_n]$ enters a short side of an
edge $E_1$, then it enters a short side of an edge $E_2$ which
is in common with $E_1$ and it exists a long side of $E_2$. See
Figure 2 and Figure 3 for different possibilities of the relative
positions of $E_1$, $E_2$ and $g_1$. Let $P$ and $Q$ be the plaques
that contain $a_n$ and $b_n$, respectively.

\begin{figure}
\centering
\includegraphics[scale=0.5]{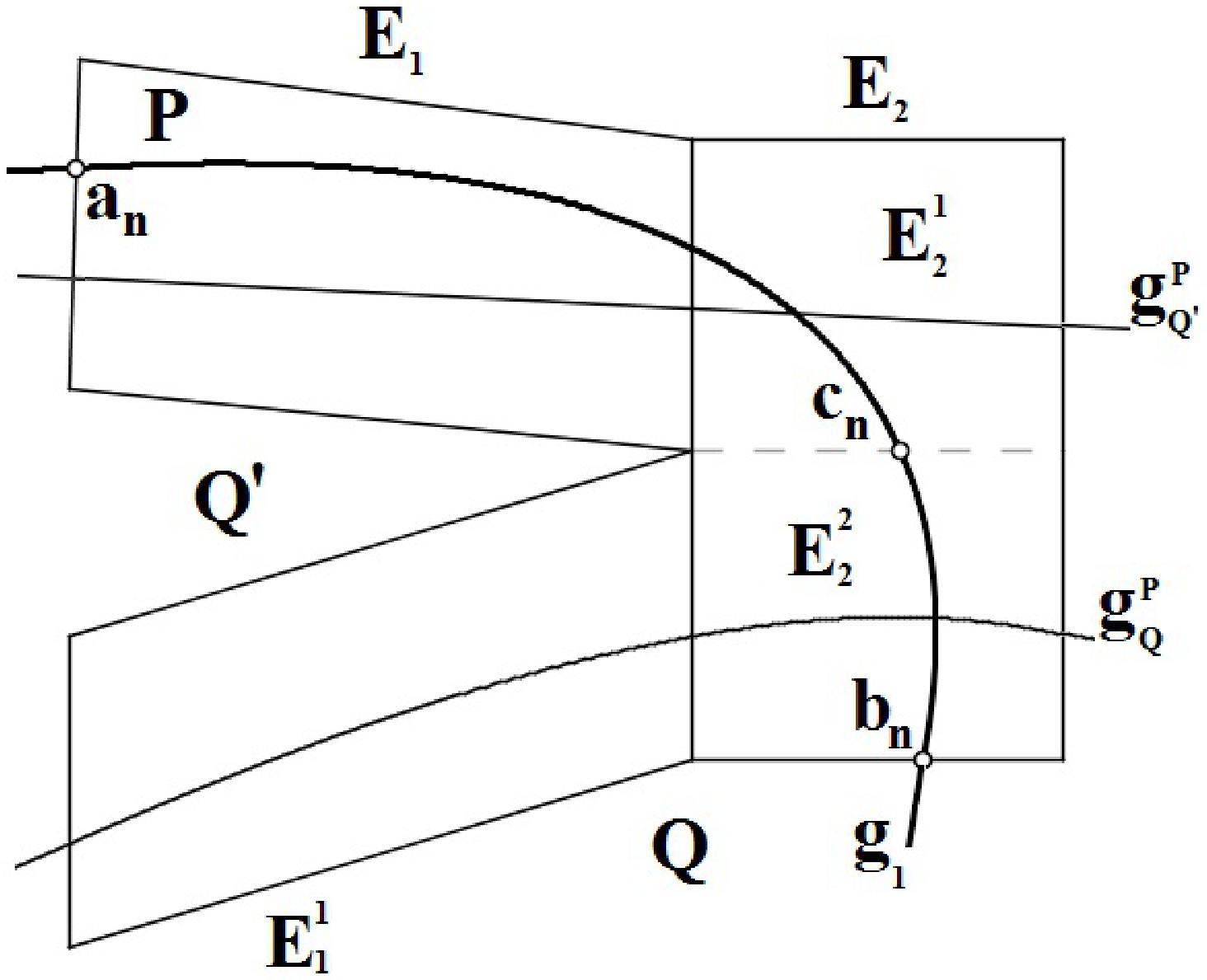}
\caption{}
\end{figure}

For the position in Figure 2 we argue as follows. Let $E_1^1$ be the incoming edge which meets $E_2$ at the same short side as $E_1$. Let $Q'$ be the
plaque that separates the geodesics of $\tilde{\lambda}$ that traverse $E_1$ from the geodesics of $\tilde{\lambda}$ that traverse
$E_1^1$. Then we have
$$
\varphi_{P,Q}= \varphi_{P,Q'}\circ \varphi_{Q',Q}.
$$
Note that $\beta (Q',Q)=\beta (E_1^1)$ which implies that
$|\beta (Q',Q)|\leq \|\beta\|_{max}$. 

By reasoning as in Case 1, we obtain 
$$
\|\varphi_{Q',Q}-Id\| \leq C''(2l^{*})(\|\beta\|_{var_{\delta}} +\delta )|s(E_1^1)|  +\frac{1+e^{4l^{*}}}{2}\|\beta\|_{max} 
$$
where $s(E_1^1)$ is the short side of $E_1^1$ contained in a short side of $E_2$.
Recall that we normalized such that $a_n=j$ and $b_n=e^{-m}j$ for $0\leq m\leq 2l^{*}$. Since $[a_n,b_n]$ connects the short sides of the edge $E_1$, it follows that $|[a_n,b_n]|\geq l_{*}>0$.

We apply Lemma \ref{lem:small_mobius_action} to $(e^{-m}j,-j)\in T^1\mathbb{H}^3$ with the constants $2l^{*}$, $C=0$ and $m\geq l_{*}>0$. We get
\begin{equation}
\label{eq:dist_QQ}
\begin{split}
D_{T^1\mathbb{H}^3}(\varphi_{Q',Q}(e^{-m}j,-j),(e^{-m}j,-j))\leq
18\pi [C''(2l^{*})(\|\beta\|_{var_{\delta}} +\delta )|s(E_1^1)|  +\\ 
+\frac{1+e^{4l^{*}}}{2}\|\beta\|_{max} ]<\eta' (2l^{*},0)\ \ \ \ \ \ 
\end{split}
\end{equation}
whenever 
\begin{equation}
\label{eq:cond1}
\frac{C''(2l^{*})(\|\beta\|_{var_{\delta}} +\delta )|s(E_1^1)|  +\frac{1+e^{4l^{*}}}{2}\|\beta\|_{max}}{l_{*}} <\frac{\eta' (2l^{*},0)}{18\pi}.
\end{equation}

Recall that
$$
\varphi_{P,Q'}=\psi_{P,Q'}\circ R_{g_{Q'}^P}^{\beta (P,Q')}
$$
where, in general, $\beta (P,Q')\neq\beta (E_1)$ since $a_n$ belongs to a short side of $E_1$. In fact $|\beta (E_1)|$ might not be even close to $0$.

Let $c_n$ be the point of the intersection between $[a_n,b_n]$ and the common boundary $s(E_1^1)$ of $E_1$ and $E_2$. It follows that the arc $[a_n,c_n]$ and the subarc of any geodesic of $\tilde{\lambda}$ that traverses $E_1$ are remaining close for the length $l_{*}$. This implies that they intersect at small angles. We give a numerical statement. 

\begin{lemma}
\label{lem:angle_of_intersection}
Let $E$ be an edge of a train track such that the shortest geodesic connecting the short sides has length at least $l>0$ and the maximum of the lengths of the short sides is at most $x$. Let $a_1$ and $a_2$  be geodesic arcs in $E$ connecting the short sides intersecting at an angle $\phi$. Then
$$
\phi \leq\frac{\pi}{2}(\coth\frac{l}{2})x.
$$
\end{lemma}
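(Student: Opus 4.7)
The plan is to leverage the long length of each arc $a_i$ against the small separation of its endpoints on a single short side. Since each $a_i$ connects the two short sides $s_1, s_2$ of $E$ (which are separated by hyperbolic distance at least $l$), its hyperbolic length is at least $l$. Let $r$ denote the intersection point of $a_1$ and $a_2$. For $i=1,2$, label the endpoints $p_i \in s_1$ and $q_i \in s_2$ of $a_i$, and write $d_i = |r p_i|$, $e_i = |r q_i|$ for the lengths of the two sub-arcs into which $r$ divides $a_i$. Then $d_1 + e_1 \geq l$ and $d_2 + e_2 \geq l$, so at least one of these four quantities is $\geq l/2$. By relabeling the short sides and the arcs if necessary, I may assume $d_1 \geq l/2$; the relevant pair of endpoints on the same short side is then $p_1, p_2 \in s_1$.

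The key geometric input is a right-triangle argument. Drop a perpendicular from $p_1$ to the geodesic carrying $a_2$, and call its foot $Y$. Whether or not $Y$ lies on the sub-arc $a_2$ itself is immaterial; what matters is the right hyperbolic triangle with vertices $r, p_1, Y$, with the right angle at $Y$, hypotenuse $|rp_1| = d_1$, and angle $\phi$ at $r$. The hyperbolic right-triangle sine formula yields
$$\sinh |p_1 Y| = \sinh(d_1)\,\sin\phi.$$
Since $p_2$ lies on the geodesic through $a_2$, $|p_1 Y| \leq |p_1 p_2|$, and since $p_1, p_2 \in s_1$, $|p_1 p_2| \leq |s_1| \leq x$. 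Combining with $d_1 \geq l/2$ and monotonicity of $\sinh$ yields
$$\sin\phi \leq \frac{\sinh x}{\sinh(l/2)}.$$

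To conclude, I would apply two elementary estimates. In the regime $x \leq l/2$ (which covers the use of the lemma in the paper, where $x = w^{*} \leq 1/20$), the mean-value bound $\sinh x \leq x \cosh x$ together with $\cosh x \leq \cosh(l/2)$ gives $\sinh(x)/\sinh(l/2) \leq x \coth(l/2)$. Jordan's inequality $\sin\phi \geq (2/\pi)\phi$ on $[0,\pi/2]$ rearranges to $\phi \leq (\pi/2)\sin\phi$, and chaining these produces $\phi \leq (\pi/2)\coth(l/2)\,x$. Outside the regime $x \leq l/2$ the right-hand side already exceeds $\pi/2$ (since $\coth(l/2)\cdot l/2 \geq 1$), so the bound is vacuous for the acute intersection angle.

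The expected obstacle is purely bookkeeping: verifying the "WLOG" reduction by checking that each of the four possible long half-arcs (stemming from $d_1, e_1, d_2, e_2$) leads to the same inequality upon swapping the labels of the two short sides and of the two arcs, and confirming that the perpendicular-foot construction goes through identically in each case regardless of where $Y$ lies on the geodesic carrying $a_2$.
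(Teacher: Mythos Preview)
Your proof is correct and follows essentially the same route as the paper: pick a sub-arc of length at least $l/2$ from the intersection point to one short side, drop a perpendicular from its endpoint to the geodesic carrying the other arc, and apply the hyperbolic sine rule in the resulting right triangle to obtain $\sin\phi \le \sinh x/\sinh(l/2)$, then finish with Jordan's inequality and $\sinh x \le x\cosh(l/2)$. Your write-up is in fact a little more careful than the paper's, which contains a couple of typos (e.g.\ ``at most $\tfrac12 l$'' where ``at least'' is meant) and leaves the $x>l/2$ case implicit.
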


\begin{proof}
Let $A=a_1\cap a_2$. Then $A$ divides $a_1$ into two sub arcs $a_1'$ and $a_2'$. Without loss of generality, we can assume that the length of $a_1'$ is at least $\frac{1}{2}l$. Then the length of the subarc $a_2'$ of the arc $a_2$ which connects the short side of $E$ which contains an endpoint of $a_1'$ to the point $A$ is at least $\frac{1}{2}l-x$. Let $h$ be the geodesic arc issued from the endpoint of $a_1'$ on a short side of $E$ orthogonal to $a_2'$. The length of $h$ is at most $x$. We obtained a right angled triangle with one angle $\varphi$ whose opposite side has length at most $x$, and the side opposite to the right angle has the length at most $\frac{1}{2}l$. The hyperbolic sine rule gives
$$
\sin\phi =\frac{\sinh x}{\sinh\frac{1}{2}l}
$$
which implies 
$$
\phi\leq\frac{\pi}{2}(\coth\frac{l}{2})x.
$$
\end{proof}

We apply $R_{g_{Q'}^P}^{\beta (P,Q')}$ to $\varphi_{Q',Q}(e^{-m}j,-j)$. By Lemma \ref{lem:angle_of_intersection}, the angle of intersection $\phi$ between $g_{Q'}^P$ and $g_1$ satisfies
\begin{equation}
\label{eq:dist_phi}
\phi\leq\frac{\pi}{2}(\coth\frac{l_{*}}{2})w^{*}.
\end{equation}

By Lemma \ref{lem:rotation_large_angle}, 
we have
\begin{equation}
\label{eq:dist_RQQ}
\begin{split}
D_{T^1\mathbb{H}^3}([R_{g_{Q'}^P}^{\beta (P,Q')}\circ\varphi_{Q',Q}](e^{-m}j,-j),(e^{-m}j,-j))\leq\ \ \ \ \ \ \ \ \ \\ 20 D_{T^1\mathbb{H}^3}(\varphi_{Q',Q}(e^{-m}j,-j),(e^{-m}j,-j))+40\sqrt{2}e^{2l^{*}}\phi
\end{split}
\end{equation}
when $D_{T^1\mathbb{H}^3}(\varphi_{Q',Q}(e^{-m}j,-j),(e^{-m}j,-j))<\frac{1}{4}$ and $\phi<\frac{e^{-2l^{*}}}{16}$.  The former condition is satisfied because $D_{T^1\mathbb{H}^3}(\varphi_{Q',Q}(e^{-m}j,-j),(e^{-m}j,-j))<\eta'(2l^{*},0)\leq\frac{1}{4}$. To achieve the later condition we require that $\frac{\pi}{2}(\coth\frac{l_{*}}{2})w^{*}<\frac{e^{-2l^{*}}}{16}$ which implies that 
\begin{equation}
\label{eq:firstmaxk}w^{*}<\frac{e^{-2l^{*}}\tanh\frac{l_{*}}{2}}{8\pi}.
\end{equation}

By (\ref{eq:dist_QQ}), (\ref{eq:dist_phi}) and (\ref{eq:dist_RQQ}) we have
\begin{equation*}
\begin{split}
D_{T^1\mathbb{H}^3}([R_{g_{Q'}^P}^{\beta (P,Q')}\circ\varphi_{Q',Q}](e^{-m}j,-j),(e^{-m}j,-j))\leq\ \ \ \ \ \ \ \ \ \ \ \ \ \ \\ \frac{560\pi}{l_{*}} [C''(2l^{*})(\|\beta\|_{var_{\delta}}+\delta )|s(E_1^1)|+\frac{1+e^{4l^{*}}}{2}\|\beta\|_{max}]l_{*}+\\
+\frac{20\sqrt{2}\pi e^{2l^{*}}\coth\frac{l_{*}}{2}}{l_{*}}w^{*}l_{*}\ \ \ \ \ \ \ \ \ 
\end{split}
\end{equation*}

Let $\eta''(2l^{*},1)$ be the constant from Lemma \ref{lem:nested_cones_for_bounded_distances}. 
If 
\begin{equation}
\label{eq:delta_var}
\delta,\|\beta\|_{var_{\delta}}\leq \frac{l_{*}}{4\cdot 560\pi |k_1| C''(2l^{*})}\eta''(2l^{*},1),
\end{equation}
\begin{equation}
\label{eq:max}
\|\beta\|_{max}\leq\frac{l_{*}}{4\cdot 560\pi 3e^{4l^{*}}}\eta''(2l^{*},1)
\end{equation}
and 
\begin{equation}
\label{eq:maxk}
w^{*}\leq \frac{l_{*}}{20\sqrt{2}\pi e^{2l^{*}}\coth\frac{l_{*}}{2}}\eta''(2l^{*},1),
\end{equation}
then 
\begin{equation}
\label{eq:distance_on_tangent}
D_{T^1\mathbb{H}^3}([R_{g_{Q'}^P}^{\beta (P,Q')}\circ\varphi_{Q',Q}](e^{-m}j,-j),(e^{-m}j,-j))\leq\eta''(2l^{*},1)l_{*}.
\end{equation}

By Case 1, we immediately obtain the estimate 
$$\|\psi_{P,Q'}-Id\|\leq C'(2l^{*})(\|\beta\|_{var_{\delta}} +\delta ) |s(E_1^1)| .
$$ 
It follows that
$$
\|\psi_{P,Q'}-Id\|\leq \eta''(2l^{*},1)
$$
if 
$$
\delta ,\|\beta\|_{var_{\delta}}\leq\frac{l_{*}}{2C'(2l^{*})|s(E_1^1)|}
$$
which is satisfied because $C'(2l^{*})\leq C''(2l^{*})$ and by (\ref{eq:delta_var}). Therefore, Lemma \ref{lem:nested_cones_for_bounded_distances} and $w^{*}\leq 1/20$ implies the nesting of the cones if
$$
\epsilon =\frac{l_{*}\eta''(2l^{*},1)}{130\pi C''(2l^{*})} \leq\min\{ \frac{l_{*}\eta''(2l^{*},1)}{4\cdot 560\pi C''(2l^{*})|s(E_1^1)|},\frac{l_{*}\eta''(2l^{*},1)}{4\cdot 560\pi \frac{1+e^{4l^{*}}}{2}}\}
$$
and
$$
\delta\leq \frac{l_{*}\eta''(2l^{*},1)}{130\pi C''(2l^{*})}
$$
and (\ref{eq:maxk}) holds.

\vskip .1 cm

We consider the positions in Figure 3. The top left position in Figure 3 is a subcase of the position in Figure 3 where we set $\varphi_{Q'Q}=Id$ and the nesting follows for the same choices of $\epsilon$, $\delta$ and $w^{*}$. The top right position is exactly dealt as with the top left position. The bottom position in Figure 3 is exactly equal to the position in Figure 2 and the nesting is achieved by choosing the same constants.

\begin{figure}
\centering
\includegraphics[scale=0.3]{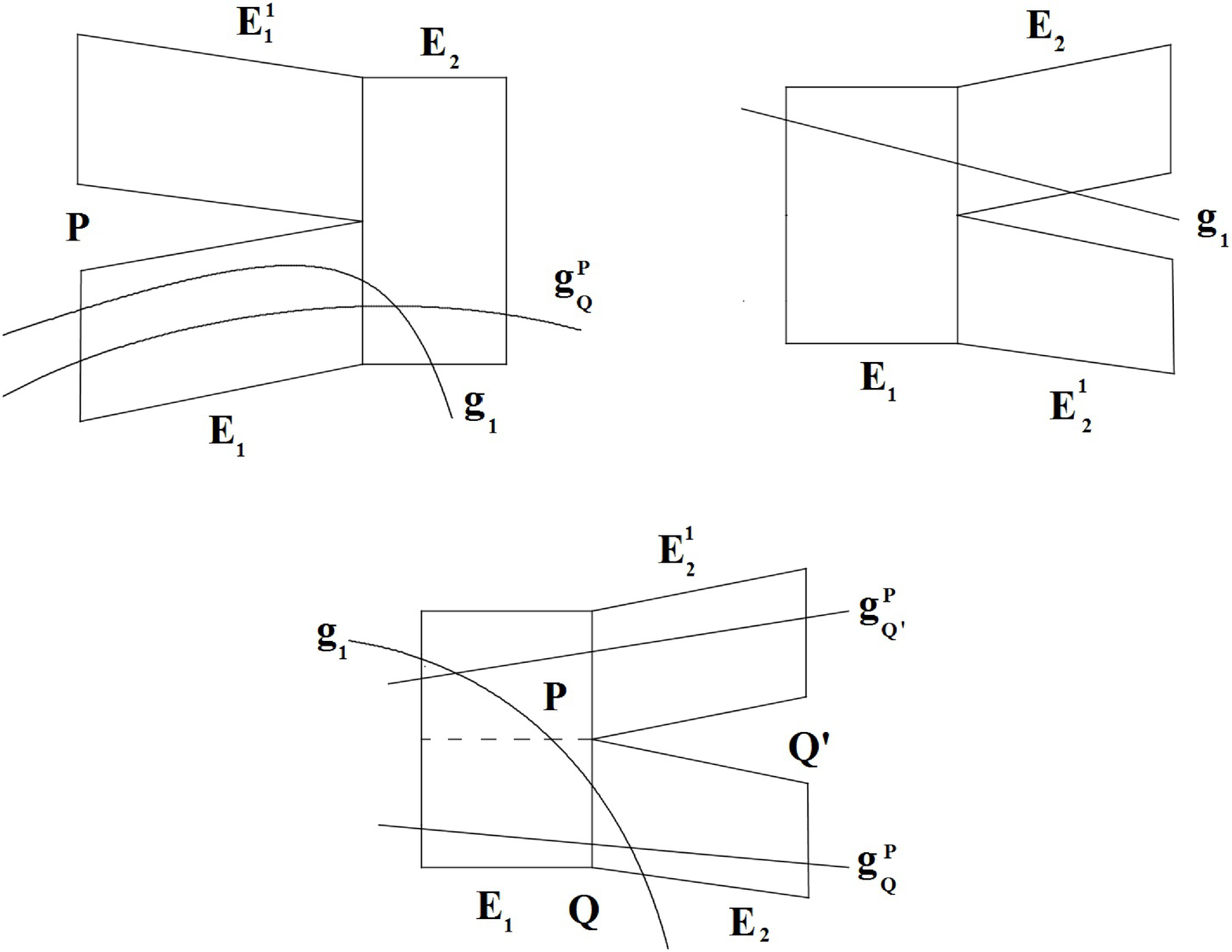}
\caption{}
\end{figure}

\subsection{All other cases for $[a_n,b_n]$.}

\hskip 2 cm

\vskip .2 cm

{\it Case 4.} Assume that $[a_n,b_n]$ enters $E$ on a short edge
and that it exists $E$ on the opposite short edge. The argument
in this case is contained in the second part of Case 3 and the bounds are the same.

\vskip .1 cm

{\it Case 5.} Assume that $[a_n,b_n]$ enters an edge $E_1$ of
$\tilde{\tau}$ through a long side, enters another edge $E_2$
through a short side in common with $E_1$ and then enters an
edge $E_3$ through a short side in common with $E_2$, and then
exists $E_3$ through a long side. Let $P$ and $Q$ be the plaques
of $\tilde{\tau}$ which contain $a_n$ and $b_n$, respectively. Note that the arc $[a_n,b_n]$ has length at least $m_{*}$ because it traverses the edge $E_2$.
Moreover, since the arc $[a_n,b_n]$ connects two long sides of different edges of $\tilde{\tau}$ it follows that the set of geodesics of $\tilde{\tau}$ that intersect $[a_n,b_n]$ is disjoint union of at most three sets of geodesics each of them traversing an edge of $\tilde{\tau}$. The situation in Figure 4 illustrates the case when this union consists of the geodesics traversing the edge above $E_2$, the edge $E_2$ and the edge below $E_2$. Note that the short sides of these three geodesics are on the distance at most $3l^{*}$ from $a_n=i$. 
Other possibilities can be easily checked by drawing pictures. It always happen that the set of geodesics of $\tilde{\lambda}$ intersecting $[a_n,b_n]$ is the disjoint union of at most three sets of geodesics traversing three edges of $\tilde{\tau}$ whose short sides are on the distance at most $3l^{*}$ from $a_n$. Therefore $\varphi_{P,Q}$ is the composition of at most three M\"obis maps $\varphi_{E_i'}$, for $i=1,2,3$, each corresponding to an edge $E_i'$ of $\tilde{\tau}$.

\begin{figure}
\centering
\includegraphics[scale=0.5]{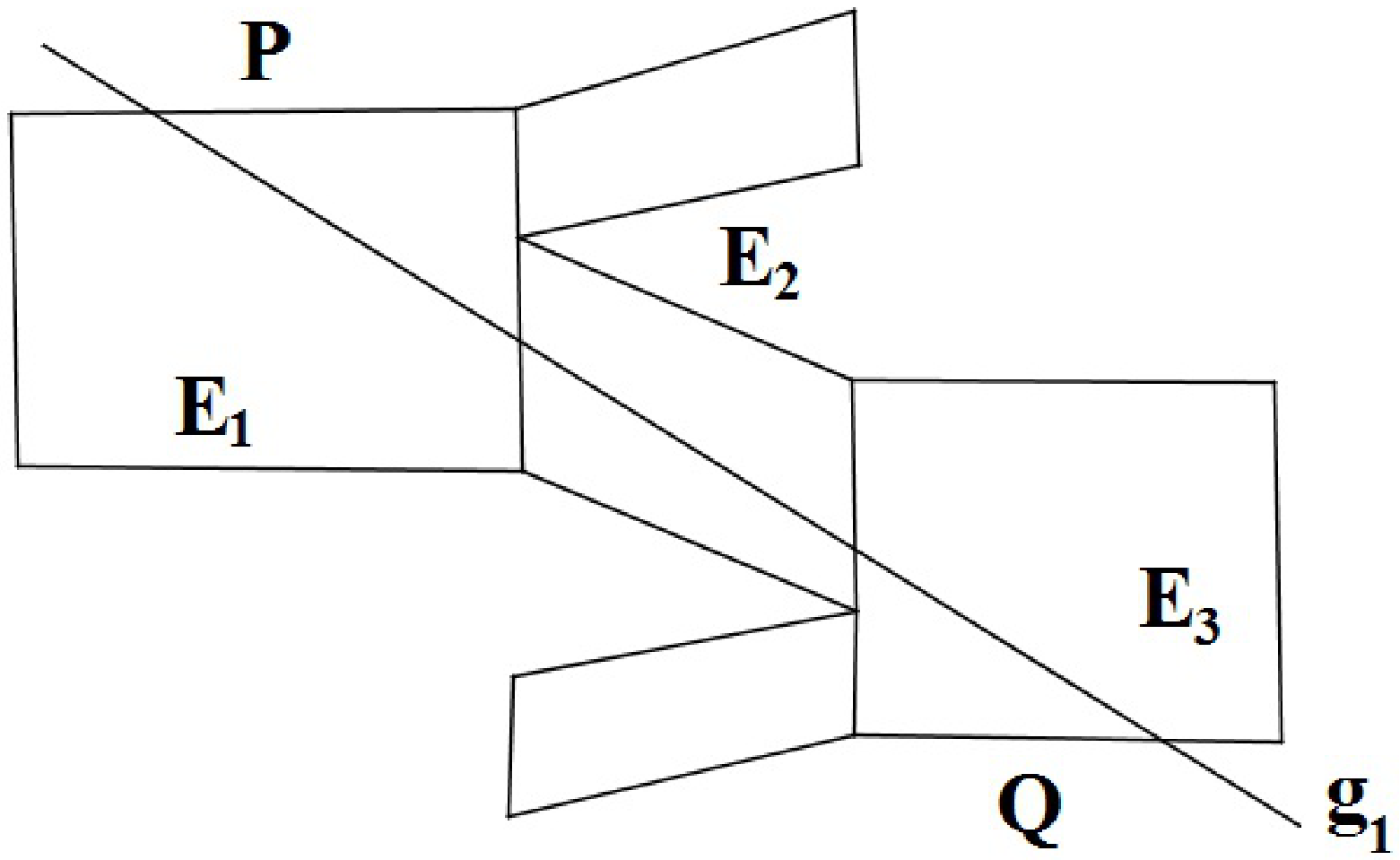}
\caption{}
\end{figure}

We use the argument from Case 1 to estimate $\|\varphi_{E_i'}-Id\|$. Namely, it is enough to replace $l^{*}$ with $3l^{*}$ to obtain
\begin{equation}
\label{eq:varphi_on_E}
\|\varphi_{E_i'}-Id\|\leq C''(3l^{*})(\|\beta\|_{var_{\delta}}+\delta )|s(E_i')|+\frac{1+e^{6l^{*}}}{2}\|\beta\|_{max}.
\end{equation}

Consequently, we have
\begin{equation}
\label{eq:est_varphi_product}
\begin{split}
\|\varphi_{P,Q}-Id\|\leq 3\big{(}1+ C''(3l^{*})(\|\beta\|_{var_{\delta}}+\delta )|s(E_i')|+\frac{1+e^{6l^{*}}}{2}\|\beta\|_{max}\big{)}^2 \times\\ \times \big{(}C''(3l^{*})(\|\beta\|_{var_{\delta}}+\delta )|s(E_i')|+\frac{1+e^{6l^{*}}}{2}\|\beta\|_{max}\big{)}.
\end{split}
\end{equation}

Assume that $\delta ,\|\beta\|_{var_{\delta}}\leq\frac{1}{2}$ and $\|\beta\|_{max}\leq 1$. Then $1+ C''(3l^{*})(\|\beta\|_{var_{\delta}}+\delta )|s(E_i')|+\frac{1+e^{6l^{*}}}{2}\|\beta\|_{max}\leq 1+C''(3l^{*})+\frac{1+e^{6l^{*}}}{2}$ because $|s(E_i')|\leq\frac{1}{20}$. We choose
$$
\epsilon =\delta = \frac{m_{*}\eta''(3l^{*},0)}{18C''(3l^{*})(1+C''(3l^{*})+\frac{1+e^{6l^{*}}}{2})^2}.
$$

\vskip .1 cm

\subsection{The end of the proof}
We established that the cones are nested along the sequence $\{
[a_n,b_n]\}_{n\in\mathbb{N}}$. Thus the bending map $\tilde{f}$ is
injective on $\partial_{\infty}\mathbb{H}^2$ as claimed.
We choose $\epsilon$ and $\delta$ to be the minimum over all cases and the nesting is guaranteed always.
{\it This ends the proof of Theorem 1.1}.

\vskip .3 cm

\begin{remark}
The size of $\epsilon$, $\delta$ and $w^{*}$ depends on the above constants $l^{*}$ and $l_{*}$ (cf. Table \ref{tab:consts}). 
The minimum $l_{*}$ of the distances between short sides of the edges of $\tilde{\tau}$ can be arbitrary small. In fact, when there are short closed geodesics contained in the geodesic lamination $\lambda$ then the train track $\tau$ cannot be modified such that $l_{*}$ is bigger than a universal positive constant. This fact forces us to include $l_{*}$ as a part of the geometric information for the geodesic lamination $\lambda$.

\begin{table}
\centering
\begin{tabular}{| c | c | c |}
\hline
$m_0$ & $\epsilon =\delta$ &  $w^{*}$  \\
\hline
$.000001$ & $2.20317\times 10^{-17}$ & $2.45816\times 10^{-20}$\\
\hline
$.00001$ & $2.20241\times 10^{-16}$ & $2.45807\times 10^{-18}$\\
\hline
$.0005$ &  $1.08066\times 10^{-14}$ & $6.13315\times 10^{-15}$ \\ \hline
$.001$ & $2.1201\times 10^{-14}$ & $2.44836\times 10^{-14}$  \\
\hline
$.0015$ & $3.1194\times 10^{-14}$ & $5.4978\times 10^{-14}$
\\ \hline
$.002$ & $4.07961\times 10^{-14}$ & $9.75434\times 10^{-14}$
\\ \hline 
$.0025$ & $5.00174\times 10^{-14}$ & $1.52107\times 10^{-13}$
\\ \hline 
$.003$ & $5.8868\times 10^{-14}$ & $2.18597\times 10^{-13}$
\\ \hline 
$.005$ & $9.07579\times 10^{-14}$ & $6.02374\times 10^{-13}$
\\ \hline 
$.01$ & $1.4901\times 10^{-13}$ & $2.36178\times 10^{-12}$
\\ \hline 
$.05$ & $1.33635\times 10^{-13}$ & $5.03139\times 10^{-11}$
\\ \hline 
$.1$ & $2.06663\times 10^{-14}$ & $1.64768\times 10^{-10}$
\\ \hline 
$.25$ & $3.41015\times 10^{-19}$ & $5.6501\times 10^{-10}$
\\ \hline 
$.5$ & $9.94507\times 10^{-43}$ & $8.30612\times 10^{-10}$
\\ \hline 
$1$ & $5.6123380\times 10^{-550}$ & $4.479\times 10^{-10}$\\
\hline
$2$ & $1.90389\times 10^{-212091}$ & $3.23146\times 10^{-11}$
\\
\hline
\end{tabular}
\label{tab:consts}
\vskip .2 cm
\caption{Values of $\epsilon$, $\delta$ and $k^{*}$ for the given $m_0$.}
\end{table}

If $\lambda$ does not contain closed geodesics then there exists a choice of a geometric train track $\tau$ which carries $\lambda$ such that $l_{*}\geq 1/20$ and $l^{*}= 1/5$. In this case we explicitly compute the constants in Theorem 1.1 to be
$$w^{*}=4.41719\times 10^{-10}$$ and
$$\epsilon = \delta=3.61749\times
10^{-17}.$$
\end{remark}

\section{Holomorphic motions and shear-bend cocycle}

Given a fixed hyperbolic surface $S$ and a maximal geodesic lamination $\lambda$ on $S$, we defined a geometric train track $\tau$ that carries $\lambda$ (cf. \S 6). Then we found universal $\epsilon >0$ and $\delta >0$ such that when an $(\mathbb{R}/2\pi\mathbb{Z})$-valued transverse cocycle $\beta$ satisfies $\|\beta\|_{max}<\epsilon w_{*}$ and $\|\beta\|_{var_{\delta}}<\epsilon$ then the bending map $\tilde{f}$ with the bending cocycle $\beta$ extends by continuity to an injection $\tilde{f}:\partial_{\infty}\mathbb{H}^2\to\partial_{\infty}\mathbb{H}^3$. 

We are considering holomorphic motions in this section, and injectivity of a family of maps is an essential part of the definition. 
To establish injectivity of a family of bending maps, we use the sufficient condition on $\beta$ obtained in the previous section. When the hyperbolic metric on $S$ is slightly changed, the metric quantities of the geometric train track $\tau$ are slightly changed. This fact is used to prove that there is an open neighborhood  of any  $\mathbb{R}$-valued transverse cocycle representing a hyperbolic metric on
$S$ in the space of $(\mathbb{C}/2\pi i\mathbb{Z})$-valued transverse cocycles whose points induce injective pleating maps.
 
\vskip .2 cm 
 
Let $K\subset\hat{\mathbb{C}}$ and let $\mathbb{D}=\{
w\in\mathbb{C}:|w|<1\}$. A {\it holomorphic motion} of a set $K$ is a map
$$
f:K\times\mathbb{D}\to\hat{\mathbb{C}}
$$
such that
$$
f(\cdot ,w ):K\to\hat{\mathbb{C}}
$$
is injective for each $w\in\mathbb{D}$, $f(z,0)=z$ for all $z\in
K$, and
$$
f(z,\cdot ):\mathbb{D}\to\hat{\mathbb{C}}
$$
is holomorphic in $w\in\mathbb{D}$ for each $z\in K$ (see
\cite{MSS}). The variable $w\in\mathbb{D}$ is called the {\it
parameter} of the holomorphic motion of $K$. It is also possible to
define holomorphic motions over simply connected regions of
$\mathbb{C}$ when we specify the point where the motion is the
identity.

The lambda lemma states that a holomorphic motion of $K$ extends to
a holomorphic motion of the closure $\bar{K}$ of $K$ (see
\cite{MSS}). Slodkowski \cite{Slo} proved that a holomorphic motion
of a closed set $K$ which contains at least three points extends to
a holomorphic motion of $\hat{\mathbb{C}}$. In fact, if a
holomorphic motion of $K$ is invariant under a subgroup $G$ of
$PSL_2(\mathbb{C})$ then the extension of the holomorphic motion can
be chosen to be $G$-equivariant on $\hat{\mathbb{C}}$ \cite{EKK}.

\vskip .1 cm

A {\it shear-bend transverse cocycle} $\beta$ for a geodesic
lamination $\lambda$ on a closed hyperbolic surface $S$ assigns to
each arc $k$ transverse to $\lambda$ (with endpoints of $k$ in the
plaques of $\lambda$) a number $\beta (k)\in\mathbb{C}/2\pi
i\mathbb{Z}$ such that if $k=k_1\cup k_2$ and $k_1,k_2$ have
disjoint interiors then $\beta (k)=\beta (k_1)+\beta (k_2)$. Denote
by $\mathcal{H}(\lambda ,\mathbb{C}/2\pi\mathbb{Z})$ the space of
all shear-bend transverse cocycles for $\lambda$. Bonahon
\cite{Bon2} proved that the space of all representations of the
fundamental group $\pi_1(S)$ of $S$ in $PSL_2(\mathbb{C})$ which
realize $\lambda$ is homeomorphic to an open subset of
$\mathcal{H}(\lambda ,\mathbb{C}/2\pi i\mathbb{Z})$, where the real
part is restricted to belong to the image of $T(S)$ in
$\mathcal{H}(\lambda ,\mathbb{R})$ and there is no restrictions on
the imaginary part.

\vskip .1 cm

Let $\alpha\in\mathcal{H}(\lambda ,\mathbb{R})$ be in the image of
the Teichm\"uller space $T(S)$. For $w=u+iv\in\mathbb{C}$, we define
$\beta_{w}(k)=(w\alpha (k))\ (\rm{mod}\ 2\pi i\mathbb{Z})$ for each
arc $k$ transverse to $\lambda$. Then
$\beta_{w}\in\mathcal{H}(\lambda ,\mathbb{C}/2\pi i\mathbb{Z})$. Let
$f_{w}:\mathbb{H}^2\to\mathbb{H}^3$ be the shear-bend map
corresponding to $\beta_{\tau}$ as in \cite{Bon2}.

\vskip .1 cm

\begin{theorem}
\label{thm:shear-bend_hol_motion} Let $\alpha\in\mathcal{H}(\lambda
,\mathbb{R})$ be in the image of $T(S)$ and let $f_{(1+w )}$ be the
shear-bend map for $\beta_{(1+w )}\in \mathcal{H}(\lambda
,\mathbb{C}/2\pi i\mathbb{Z})$. Then there exists $r>0$ such that
the shear-bend map
$$f_{(1+w )}:\mathbb{H}^2\to\mathbb{H}^3$$
extends by continuity to a holomorphic motion of
$\partial_{\infty}\mathbb{H}^2$ in $\partial_{\infty}\mathbb{H}^3$
for the parameter $\{w\in\mathbb{C}:|w|<r\}$.
\end{theorem}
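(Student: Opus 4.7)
The plan is to verify the three properties that define a holomorphic motion of $\partial_\infty\mathbb{H}^2$ over a disk $\mathbb{D}_r=\{w\in\mathbb{C}:|w|<r\}$: that $\tilde{f}_1|_{\partial_\infty\mathbb{H}^2}$ is the identity, that $z\mapsto\tilde{f}_{(1+w)}(z)$ is injective for each $w\in\mathbb{D}_r$, and that $w\mapsto\tilde{f}_{(1+w)}(z)$ is holomorphic for each $z$. Writing $w=u+iv$, we split $\beta_{(1+w)}=(1+u)\alpha+iv\alpha$ (modulo $2\pi i\mathbb{Z}$) into its real and imaginary parts. Since $\mathcal{C}(\lambda)$ is open in $\mathcal{H}(\lambda,\mathbb{R})$ (Bonahon), for all $u$ in some interval $(-r_0,r_0)$ the real part $(1+u)\alpha$ still lies in $\mathcal{C}(\lambda)$ and realizes a hyperbolic metric $\sigma_u$ on $S$ depending continuously on $u$, with $\sigma_0$ the original metric. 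Under this identification, $\tilde{f}_{(1+w)}$ is the pleating map on $(S,\sigma_u)$ with pleating locus $\lambda$ and $(\mathbb{R}/2\pi\mathbb{Z})$-valued bending cocycle $v\alpha$ constructed in \S 4; at $w=0$ the bending vanishes and $\tilde{f}_1$ is the identity embedding $\mathbb{H}^2\hookrightarrow\mathbb{H}^3$, which restricts to the identity on the ideal boundary.

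For injectivity, I would fix a geometric set of ties $\{k_1,\ldots,k_n\}$ for $\lambda$ in $(S,\sigma_0)$ satisfying the hypotheses of Theorem~\ref{thm:main} with strict inequality in (\ref{eq:dist_size_k}). The same homotopy classes furnish geometric ties for every nearby metric $(S,\sigma_u)$, and the geometric quantities $l^{*}(u)$, $l_{*}(u)$, $w^{*}(u)$, $w_{*}(u)$ are continuous functions of $u$, so both $w^{*}(u)\le 1/20$ and the strict inequality (\ref{eq:dist_size_k}) persist uniformly on a smaller interval $|u|<r_1$. On this interval Theorem~\ref{thm:main} supplies uniform constants $\epsilon_0,\delta_0>0$. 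The bending cocycle $v\alpha$ satisfies $\|v\alpha\|_{\max}=|v|\cdot\|\alpha\|_{\max}$ and $\|v\alpha\|_{var_{\delta_0}}=|v|\cdot\|\alpha\|_{var_{\delta_0}}$, where $\|\alpha\|_{\max}$ and $\|\alpha\|_{var_{\delta_0}}$ are finite and continuous in $u$; choosing $r\le r_1$ small enough, the hypotheses (\ref{eq:max_bdd_by_epsilon}) and (\ref{eq:varr_epsilon_bounded_by_epsilon}) are satisfied throughout $|w|<r$, and Theorem~\ref{thm:main} delivers the continuous injective extension of $\tilde{f}_{(1+w)}$ to $\partial_\infty\mathbb{H}^2$.

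For holomorphic dependence, I would inspect the realization formula of \S 4. Each finite approximation $\varphi_{\mathcal{P}}(w)=B_{P_1}(w)\cdots B_{P_n}(w)R_Q(w)$ is a finite product of hyperbolic isometries of $\mathbb{H}^3$ whose $PSL_2(\mathbb{C})$ matrix entries are entire functions of $w$: the translation lengths and rotation angles appearing in $B_{P_i}$ and $R_Q$ are linear in $w$ with fixed real coefficients coming from $\alpha$. The norm estimates of \S 6 (the multiplicative bound $\|\prod_i B_i\|\le C(l^{*})$ and the summable control of $\|\psi_{P,Q}-\mathrm{Id}\|$) extend with the same structure to the complex parameter provided $|w|$ stays bounded, yielding uniform convergence of the finite approximations on compact subsets of $\mathbb{D}_r$. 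Weierstrass' theorem then gives that $w\mapsto\varphi_{P,Q}(w)$ is holomorphic; since the action on $\partial_\infty\mathbb{H}^3$ is a rational function of the matrix entries, so is $w\mapsto \tilde{f}_{(1+w)}(z)$ for each $z\in\partial_\infty\mathbb{H}^2$, completing the verification that $(z,w)\mapsto\tilde{f}_{(1+w)}(z)$ is a holomorphic motion.

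The main obstacle is making the quantitative estimates of \S 6 uniform in the complex parameter $w$: the bounds coming from Lemmas~\ref{lem:bound_on_norm} and~\ref{lem:const_in_norm} were derived for real rotation angles and translation lengths, and one must verify that allowing these to become complex of bounded modulus only worsens the multiplicative constants by a controllable factor, keeping $\epsilon_0,\delta_0$ uniform across $\mathbb{D}_r$.
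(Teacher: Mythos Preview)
Your treatment of the identity condition and of injectivity is essentially the paper's: split into real shear part $(1+u)\alpha$ and imaginary bending part $v\alpha$, use openness of $\mathcal{C}(\lambda)$ and continuity of the train-track data in $u$, then invoke Theorem~\ref{thm:main} uniformly.

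The holomorphy argument differs. The paper does not redo the estimates of \S 6 for complex parameters at all; instead it cites Bonahon \cite{Bon2} for the fact that the shear-bend map depends holomorphically on the cocycle when restricted to any single plaque of $\tilde{\lambda}$. This already gives holomorphy of $w\mapsto f_{(1+w)}(z)$ for every $z$ that is an ideal vertex of a plaque, which is a dense subset of $\partial_\infty\mathbb{H}^2$. The lambda lemma then upgrades this to a holomorphic motion of all of $\partial_\infty\mathbb{H}^2$. Your route---pushing uniform convergence of the finite approximations $\varphi_{\mathcal{P}}(w)$ through Weierstrass---is not wrong in spirit, but (i) as written it only yields holomorphy of $w\mapsto\varphi_{P,Q}(w)$, hence of $f_{(1+w)}(z)$ for $z$ a plaque vertex, so you would still need density plus the lambda lemma to reach general $z$; and (ii) the ``obstacle'' you flag, extending Lemmas~\ref{lem:bound_on_norm} and~\ref{lem:const_in_norm} to complex translation/rotation parameters, is exactly what the citation to \cite{Bon2} sidesteps. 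Invoking Bonahon's holomorphy result together with the lambda lemma is both shorter and removes the obstacle entirely.
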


\begin{proof}
For $w=u+iv$, consider the hyperbolic surface $S_{1+u}$ obtained by
shearing along the real part of $\beta_{1+w}$ which is
$(1+u)\alpha\in\mathcal{H}(\lambda ,\mathbb{R})$. By \cite{Bon2},
the image of $T(S)$ is a cone in $\mathcal{H}(\lambda ,\mathbb{R})$
and therefore $S_{1+u}$ exists for $r$ small enough. Note that $S_1$
is the original hyperbolic surface $S$.

Let $\tau$ be the train track that carries $\lambda$ used in the
proof of Theorem \ref{thm:main}. We choose $\tau$ such that $w^{*}=\frac{1}{2}\cdot \frac{e^{-2l^{*}}\tanh\frac{l_{*}}{2}}{8\pi}$.  For $|u|$ small
enough, the endpoints of the switches of $\tau$ under the shear map
$f_{(1+u)}$ are close to the switches of $\tau$. By connecting the switches with geodesics for the hyperbolic metric of $(1+u)\alpha$ we
construct a train track $\tau_{1+u}$ which is homotopic to $\tau$. For $|u|$ small enough, we have the constants $l^{*}(\tau_{1+u})$, $l_{*}(\tau_{1+u})$ and $w^{*}(\tau_{1+u})$ are as close as we need to the original constants $l^{*}$, $l_{*}$ and $w^{*}$ of the train track $\tau =\tau_1$.
The constants $w^{*}(\tau_{1+u})$ and $\epsilon (\tau_{1+u})=\delta (\tau_{1+u})$ from the proof of Theorem \ref{thm:main} depend continuously on $l^{*}(\tau_{1+u})$, $l_{*}(\tau_{1+u})$ and $w^{*}(\tau_{1+u})$. Thus they depend continuously on $u$ and are bounded away from $0$ for $u$ small enough.
Then the proof of
Theorem \ref{thm:main} applies to each $\beta_{1+w}$ to
obtain an injective map map for $|w|$ small enough where the bound on $|Im(w)|$ is obtained from Theorem
\ref{thm:main}. It is clear that when $w=0$, we have
$f_{(1+0)}=id$.

Finally, we fix $z\in\partial_{\infty}\mathbb{H}^2$ and consider
$w\mapsto f_{(1+w)}(z)$. Bonahon \cite{Bon2} proved that the
shear-bend map is holomorphic in the transverse cocyle when
restricted to a single plaque of $\tilde{\lambda}'$. Since the
endpoints of the plaques are dense in $\partial_{\infty}\mathbb{H}$,
it follows that $w\mapsto f_{(1+w)}(z)$ is holomorphic in $w$ for
$z$ in a dense subset of $\partial_{\infty}\mathbb{H}$. By the
lambda lemma, this is enough to claim that $f_{(1+w)}$ extends to a
holomorphic motion of $\partial_{\infty}\mathbb{H}^2$ for $w$ in the
described neighborhood of $0\in\mathbb{C}$.
\end{proof}

\end{document}